\documentclass[preprint,11pt]{elsarticle}
\usepackage{amsmath,amsfonts,amssymb,amsthm}
\usepackage{comment}
\usepackage{enumerate}
\usepackage[hmargin={28mm,28mm},vmargin={30mm,35mm}]{geometry}
\usepackage{paralist}
\usepackage{pgfplots}
\usepackage{tikz}
\usepackage{subcaption}
\usepackage{xcolor}
\usepackage{booktabs}

\usepackage[bold]{hhtensor}

\usetikzlibrary{spy}








\newtheorem{theorem}{Theorem}
\newtheorem{lemma}[theorem]{Lemma}
\newtheorem{proposition}[theorem]{Proposition}

\theoremstyle{remark}
\newtheorem{remark}[theorem]{Remark}
\theoremstyle{definition}


\newcommand{\lsim}[1][]{\lesssim_{#1}}
\newcommand{\esim}[1][]{\simeq_{#1}}

\newcommand{\coloneq}{\mathrel{\mathop:}=}
\newcommand{\eqcolon}{=\mathrel{\mathop:}}

\newcommand{\Real}{\mathbb{R}}

\DeclareMathOperator{\tr}{tr}

\newcommand{\SCAL}{{\cdot}}
\newcommand{\SSCAL}{:}
\newcommand{\GRAD}{\boldsymbol{\nabla}}
\newcommand{\DIV}{\boldsymbol{\nabla}{\cdot}}
\newcommand{\LAPL}{{\Delta}}
\newcommand{\GRADh}{\GRAD_h}

\newcommand{\st}{\; : \;}

\newcommand{\Id}[1][d]{\matr{I}_{#1}}

\newcommand{\Mh}[1][h]{\mathcal{M}_{#1}}
\newcommand{\Th}[1][h]{\mathcal{T}_{#1}}
\newcommand{\Fh}[1][h]{\mathcal{F}_{#1}}
\newcommand{\Fhi}[1][h]{\mathcal{F}_{#1}^{{\rm i}}}
\newcommand{\Fhb}[1][h]{\mathcal{F}_{#1}^{{\rm b}}}

\newcommand{\normal}{\vec{n}}

\newcommand{\Poly}[1]{\mathbb{P}^{#1}}

\newcommand{\norm}[2][]{\|#2\|_{#1}}
\newcommand{\seminorm}[2][]{|#2|_{#1}}

\newcommand{\term}{\mathfrak{T}}

\newcommand{\Pe}[1][TF]{\mathrm{Re}_{#1}}
\newcommand{\Reynolds}{\mathrm{Re}}

\newcommand{\UT}{\underline{\boldsymbol{U}}_T^k}
\newcommand{\Uh}[1][]{\underline{\boldsymbol{U}}_{h#1}^k}
\newcommand{\Ph}{P_h^k}
\newcommand{\Uhz}{\underline{\boldsymbol{U}}_{h,0}^k}

\newcommand{\hvec}[1]{\hat{\vec{#1}}}
\newcommand{\uvec}[1]{\underline{\vec{#1}}}
\newcommand{\uhvec}[1]{\hat{\underline{\vec{#1}}}}

\newcommand{\rT}[1][T]{\boldsymbol{r}_{#1}^{k+1}}
\newcommand{\rh}[1][h]{\boldsymbol{r}_{#1}^{k+1}}

\newcommand{\lproj}[2][h]{\pi_{#1}^{#2}}
\newcommand{\vlproj}[2][h]{\boldsymbol{\pi}_{#1}^{#2}}
\newcommand{\mlproj}[2][h]{\boldsymbol{\pi}_{#1}^{#2}}

\newcommand{\IT}[1][k]{\underline{\boldsymbol{I}}_T^{#1}}
\newcommand{\Ih}[1][k]{\underline{\boldsymbol{I}}_h^{#1}}

\newcommand{\jump}[2][F]{[#2]_{#1}}

\newcommand{\GwT}[2][\uvec{w}_T]{\vec{\mathcal{G}}_T^k(#1;#2)}
\newcommand{\GT}[1][k]{\matr{G}_T^{#1}}

\newcommand{\DT}[1][k]{D_T^{#1}}

\newcommand{\flux}{\vec{\Phi}}
\newcommand{\visc}{{\rm visc}}
\newcommand{\conv}{{\rm conv}}


\begin{document}

\begin{frontmatter}

\title{A Hybrid High-Order method for the incompressible Navier--Stokes equations based on Temam's device}

\author[label1]{Lorenzo Botti}
\author[label2]{Daniele A. Di Pietro}
\author[label3]{J\'er\^ome Droniou}

\address[label1]{Department of Engineering and Applied Sciences, University of Bergamo (Italy), \emph{lorenzo.botti@unibg.it}}
\address[label2]{IMAG, Univ Montpellier, CNRS, Montpellier, France, \emph{daniele.di-pietro@umontpellier.fr}}
\address[label3]{School of Mathematical Sciences, Monash University, Melbourne (Australia), \emph{jerome.droniou@monash.edu}}

\begin{abstract}
  In this work we propose a novel Hybrid High-Order method for the incompressible Navier--Stokes equations based on a formulation of the convective term including Temam's device for stability.
  The proposed method has several advantageous features:
  it supports arbitrary approximation orders on general meshes including polyhedral elements and non-matching interfaces;
  it is inf-sup stable;
  it is locally conservative;
  it supports both the weak and strong enforcement of velocity boundary conditions;
  it is amenable to efficient computer implementations where a large subset of the unknowns is eliminated by solving local problems inside each element.
  Particular care is devoted to the design of the convective trilinear form, which mimicks at the discrete level the non-dissipation property of the continuous one.
  The possibility to add a convective stabilisation term is also contemplated, and a formulation covering various classical options is discussed.
  The proposed method is theoretically analysed, and an energy error estimate in $h^{k+1}$ (with $h$ denoting the meshsize) is proved under the usual data smallness assumption.
  A thorough numerical validation on two and three-dimensional test cases is provided both to confirm the theoretical convergence rates and to assess the method in more physical configurations (including, in particular, the well-known two- and three-dimensional lid-driven cavity problems).
\end{abstract}

\begin{keyword}
  Hybrid High-Order methods \sep incompressible Navier--Stokes equations \sep polyhedral element methods \sep a priori error estimate
  \MSC[2010] 65N08 \sep 65N30 \sep 65N12 \sep 35Q30 \sep 76D05
\end{keyword}

\end{frontmatter}



\section{Introduction}

In this work we propose a novel Hybrid High-Order (HHO) method for the incompressible Navier--Stokes equations based on a formulation of the convective term including Temam's device for stability \cite{Temam:79}.

Introduced in \cite{Di-Pietro.Ern:15}, HHO methods are new generation discretisation methods for PDEs based on discrete unknowns that are broken polynomials on the mesh and on its skeleton.
Unlike classical finite element methods, the notion of reference element is not present in HHO, and basis functions are not explicitly defined.
Instead, the discrete unknowns are combined to reconstruct relevant quantities inside each element by mimicking integration by parts formulas.
These local reconstructions are used to formulate consistent Galerkin terms, while stability is achieved by stabilisation terms devised at the element level.
The HHO approach has several advantages:
it is dimension-independent;
it supports arbitrary approximation orders on general meshes including polyhedral elements, non-matching junctions and, possibly, curved faces \cite{Botti.Di-Pietro:18};
it is locally conservative;
it is amenable to efficient (parallel or serial) computer implementations.
The HHO method proposed in this work has additional advantageous features specific to the incompressible Navier--Stokes problem: it satisfies a uniform inf-sup condition, leading to a stable pressure-velocity coupling; it behaves robustly for large Reynolds numbers; it supports both weakly and strongly enforced boundary conditions; at each nonlinear iteration, it requires to solve a linear system where the only globally coupled unknowns are the face velocities and the mean value of the pressure inside each element.

The literature on the numerical approximation of the incompressible Navier--Stokes equations is vast, and giving a detailed account lies out of the scope of the present work.
We therefore mention here only those numerical methods which share similar features with the HHO approach.
Discontinuous Galerkin (DG) methods, which hinge on discrete unknowns that are broken polynomials on the mesh and have the potential to support polyhedral elements, have gained significant popularity in computational fluid mechanics.
Their application to the discretisation of incompressible flows has been considered, e.g., in \cite{Karakashian.Katsaounis:00,Cockburn.Kanschat.ea:05,Girault.Riviere.ea:05,Bassi.Crivellini.ea:06,Cockburn.Kanschat.ea:07,Mozolevski.Suli.ea:07,Di-Pietro.Ern:10,Botti.Di-Pietro:11,Riviere.Sardar:14,Tavelli.Dumbser:14}.
We particularly mention here \cite{Di-Pietro.Ern:10}, where a DG trilinear form based on Temam's device was proposed which, unlike the one considered here, leads to a non-conservative discretisation of the momentum equation.
Linked to DG methods are Cell Centered Galerkin methods, which hinge on incomplete polynomial spaces; see  \cite[Section 4]{Di-Pietro:12} for their application to the incompressible Navier--Stokes problem.
A second family of discretisation methods worth mentioning here are Hybridisable Discontinuous Galerkin (HDG) methods, which can be regarded as an evolution of DG methods where both face and element unknowns are present.
Their application to the incompressible Navier--Stokes equations has been considered, e.g., in \cite{Nguyen.Peraire.ea:11,Giorgiani.Fernandez-Mendez.ea:14,Qiu.Shi:16,Ueckermann.Lermusiaux:16,Cesmelioglu.Cockburn.ea:17}; see also \cite{Di-Pietro.Krell:18} (and, in particular, Remark 5 therein) for a comparison with HHO methods.
More recent polyhedral technologies have also been applied to the discretisation of the incompressible Navier--Stokes equations.
We cite here, in particular, the two-dimensional Virtual Element Method (VEM) of \cite{Beirao-da-Veiga.Lovadina.ea:18}; see also the related work \cite{Beirao-da-Veiga.Lovadina.ea:17}.
For a study of the relations among HDG, HHO, and VEM in the context of scalar diffusion problems, we refer the reader to \cite{Boffi.Di-Pietro:18,Di-Pietro.Droniou.ea:18}.
Finally, as regards HHO methods, applications to incompressible flows have been considered in \cite{Aghili.Boyaval.ea:15,Di-Pietro.Ern.ea:16*1,Di-Pietro.Krell:18,Botti.Di-Pietro.ea:18}.

For a given integer $k\ge 0$, the HHO method proposed here hinges on discrete velocity unknowns that are vector-valued polynomials of total degree $\le k$ on mesh elements and faces, and discontinuous pressure unknowns of total degree $\le k$.
Based on the discrete velocity unknowns, we reconstruct inside each element:
(i) a velocity one degree higher than element unknowns, leading to the characteristic $\mathcal{O}(h^{k+1})$-approximation of the viscous term;
(ii) a divergence in the space of scalar-valued polynomials of total degree $\le\ell$ whose purpose its twofold: with $\ell=k$, it is used in the discretisation of the pressure-velocity coupling; with $\ell=2k$, it appears in Temam's contribution to the convective trilinear form;
(iii) a directional derivative used to formulate the consistent contribution in the convective term.
The convective trilinear form embedding Temam's device is the first main novelty of this paper, and it mimicks at the discrete level the non-dissipation property valid at the continuous level.
Stability in the convection-dominated regime can be strenghtened by introducing a convective stabilisation term, for which a variety of classical options are here adapted to the HHO framework.
The main source of inspiration is \cite{Di-Pietro.Droniou.ea:15}, where HHO methods for locally degenerate scalar diffusion-advection-reaction problems are developed.
The second important novelty of this work is the extension of Nitsche's technique to weakly enforce boundary conditions on the velocity in the HHO scheme.
The weak enforcement of boundary conditions can improve the resolution of boundary layers and simplifies the practical implementation.

Theoretical justification and numerical validation of the proposed method are provided.
From the theoretical point of view, we prove an error estimate in $h^{k+1}$ for the discrete $H_0^1$-like norm of the error on the velocity and the $L^2$-norm of the error on the pressure.
As customary for the Navier--Stokes equations, this error estimate is derived under a data smallness assumption.
Following the ideas in \cite{Di-Pietro.Krell:18}, one could also prove convergence without any smallness assumption on the data nor additional regularity on the exact solution. These developments are omitted for the sake of brevity.
From the numerical point of view, we provide a thorough assessment of the proposed method using well-known benchmark problems from the literature.
Specifically, the convergence rates with or without convective stabilisation and with strongly or weakly enforced boundary conditions are assessed using Kovasznay's solution \cite{Kovasznay:48}.
The robustness in the convection-dominated regime is assessed, on the other hand, simulating the two- and three-dimensional lid-driven cavity flows for Reynolds numbers up to $\pgfmathprintnumber{20000}$ and polynomial degrees up to $k{=}8$.
For these test cases, no analytical solution is available, so we compare with results from the literature.

The rest of this work is organised as follows.
In Section \ref{sec:setting.cont} we discuss the continuous problem and formulate a key remark that will inspire the design of the discrete trilinear form.
In Section \ref{sec:setting} we discuss the discrete setting, with particular focus on the local reconstruction operators at the heart of the HHO method.
In Section \ref{sec:method} we discuss the discretisation of the various terms, highlight the properties relevant for the analysis, formulate the discrete problem, and carry out its convergence analysis.
Section \ref{sec:tests} contains a thorough numerical validation on tests commonly used in the literature.
The flux formulation of the method highlighting its local conservation properties is derived in Section \ref{sec:flux}.
Finally, \ref{sec:proofs} contains the proofs of intermediate technical results.
Readers mainly interested in the numerical recipe and results can skip this technical appendix at first reading.


\section{Continuous setting and a key remark}\label{sec:setting.cont}

Let $\Omega\subset\Real^d$, $d\in\{2,3\}$, denote a bounded connected polyhedral domain with boundary $\partial\Omega$.
We consider here a Newtonian fluid with constant density.
Denote by $\vec{f}\in L^2(\Omega)^d$ a force per unit volume, by $\nu>0$ a real number representing the kinematic viscosity, and set $L^2_0(\Omega)\coloneq\left\{q\in L^2(\Omega)\st\int_\Omega q = 0\right\}$.
The weak formulation of the incompressible Navier--Stokes equations reads:
Find $(\vec{u},p)\in H^1_0(\Omega)^d\times L^2_0(\Omega)$ such that
\begin{subequations}\label{eq:weak}
  \begin{alignat}{2}\label{eq:weak:momentum}
    \nu a(\vec{u},\vec{v})
    + t(\vec{u},\vec{u},\vec{v})
    + b(\vec{v},p)
    &= \int_\Omega \vec{f}\SCAL\vec{v}
    &\qquad&\forall \vec{v}\in H^1_0(\Omega)^d,
    \\ \label{eq:weak:mass}
    -b(\vec{u},q)&=0 &\qquad&\forall q\in L^2(\Omega),
  \end{alignat}
\end{subequations}
with bilinear forms $a$ and $b$ and trilinear form $t$ such that
$$
a(\vec{w},\vec{v})\coloneq\int_\Omega\GRAD\vec{w}\SSCAL\GRAD\vec{v},\qquad
b(\vec{v},q)\coloneq -\int_\Omega (\DIV\vec{v}) q,\qquad
t(\vec{w},\vec{v},\vec{z})\coloneq\int_\Omega (\vec{w}\SCAL\GRAD)\vec{v}\SCAL\vec{z}
$$
Here, $\vec{w}\SCAL\GRAD$ is the differential operator $\vec{w}\SCAL\GRAD=\sum_{j=1}^d w_j\partial_j$,
so that, if $\vec{w}=(w_i)_{i=1,\ldots,d}$, $\vec{u}=(u_i)_{i=1,\ldots,d}$, and $\vec{v}=(v_i)_{i=1,\ldots,d}$, then $(\vec{w}\SCAL\GRAD)\vec{u}\SCAL\vec{v}=\sum_{i,j=1}^d w_j(\partial_j u_i) v_i$.
In \eqref{eq:weak}, we have considered wall boundary conditions for the sake of simplicity: other usual boundary conditions can be considered.

The well-posedness of problem \eqref{eq:weak} for small data hinges on the coercivity of the viscous bilinear form $a$, on the inf-sup stability of the pressure-velocity coupling bilinear form $b$, and on the non-dissipativity of the convective trilinear form $t$.
These same stability properties, along with suitable consistency requirements, are key to the design of a convergent HHO method.
A coercive discrete counterpart of the bilinear form $a$ has been proposed in \cite{Di-Pietro.Ern.ea:14}; see also \cite{Di-Pietro.Droniou.ea:15} concerning the weak enforcement of boundary conditions.
Inf-sup stable HHO counterparts of the bilinear form $b$, on the other hand, can be found, e.g., in \cite{Aghili.Boyaval.ea:15,Di-Pietro.Ern.ea:16*1,Boffi.Botti.ea:16}.
Here, we generalise them to incorporate weakly enforced boundary conditions.

Let us examine the non-dissipativity property of $t$ in order to illustrate the strategy used to mimick it at the discrete level.
We start by noting the following integration by parts formula:
For all $\vec{w},\vec{v},\vec{z}\in H^1(\Omega)^d$,
\begin{equation}\label{eq:ibp.cont}
  \int_\Omega(\vec{w}\SCAL\GRAD)\vec{v}\SCAL\vec{z}
  + \int_\Omega(\vec{w}\SCAL\GRAD)\vec{z}\SCAL\vec{v}
  + \int_\Omega(\DIV\vec{w})(\vec{v}\SCAL\vec{z})
  = \int_{\partial\Omega}(\vec{w}\SCAL\normal)(\vec{v}\SCAL\vec{z}),
\end{equation}
where $\normal$ denotes the outward unit normal vector to $\partial\Omega$.
Using \eqref{eq:ibp.cont} with $\vec{w}=\vec{v}=\vec{z}=\vec{u}$ ($\vec{u}$ being the velocity solution to \eqref{eq:weak}), we get
\begin{equation}\label{eq:t.non-dissipativity}
  t(\vec{u},\vec{u},\vec{u})
  = \int_\Omega (\vec{u}\SCAL\GRAD)\vec{u}\SCAL\vec{u}
  = -\frac12\int_\Omega(\DIV\vec{u})(\vec{u}\SCAL\vec{u})
  + \frac12\int_{\partial\Omega}(\vec{u}\SCAL\normal)(\vec{u}\SCAL\vec{u})
  = 0,
\end{equation}
where we have used \eqref{eq:weak:mass} to infer $\DIV\vec{u}=0$ and cancel the first term, and the fact that $\vec{u}$ vanishes on $\partial\Omega$ to cancel the second.
This relation expresses the fact that the convective term does not contribute to the kinetic energy balance, obtained taking $\vec{v}=\vec{u}$ in \eqref{eq:weak:momentum}.

When attempting to reproduce property \eqref{eq:t.non-dissipativity} at the discrete level, a difficulty arises: the discrete counterparts of the terms in the right-hand side of \eqref{eq:t.non-dissipativity} may not vanish, since the discrete solution may not be ``sufficiently'' divergence-free (see Remark \ref{rem:incompressibility}) and/or it may not be zero on $\partial\Omega$.
To overcome this difficulty, the following modified expression for $t$ can be used as a starting point, an idea which can be traced back to Temam \cite{Temam:79}:
\begin{equation}\label{eq:tilde.t}
  \tilde{t}(\vec{w},\vec{v},\vec{z})
  = \int_\Omega(\vec{w}\SCAL\GRAD)\vec{v}\SCAL\vec{z}
  + \frac12\int_\Omega(\DIV\vec{w})(\vec{v}\SCAL\vec{z})
  - \frac12\int_{\partial\Omega}(\vec{w}\SCAL\normal)(\vec{v}\SCAL\vec{z}).
\end{equation}
Repeating the above reasoning, it is a simple matter to check that this trilinear form satisfies the following generalised version of property \eqref{eq:t.non-dissipativity}:
For all $\vec{w},\vec{v}\in H^1(\Omega)^d$, $\tilde{t}(\vec{w},\vec{v},\vec{v})=0$.
This means, in particular, that $\tilde{t}$ is non-dissipative even if $\vec{w}$ is not divergence free and $\vec{v}$ does not vanish on $\partial\Omega$ (as may be the case for the discrete velocity).



\section{Discrete setting}\label{sec:setting}

In this section we establish the discrete setting. After briefly recalling the notion of mesh and introducing projectors on local polynomial spaces, we define the spaces of discrete unknowns and the local reconstructions upon which the HHO method is built.

\subsection{Mesh}\label{sec:setting:mesh}

Throughout the paper, we will use for the sake of simplicity the three-dimensional nomenclature also when $d=2$, i.e., we will speak of polyhedra and faces rather than polygons and edges.
We define a mesh as a couple $\Mh\coloneq(\Th,\Fh)$, where $\Th$ is a finite collection of polyhedral elements $T$ such that $h\coloneq\max_{T\in\Th}h_T>0$ with $h_T$ denoting the diameter of $T$, while $\Fh$ is a finite collection of planar faces $F$. It is assumed henceforth that the mesh $\Mh$ matches the geometrical requirements detailed in \cite[Definition 7.2]{Droniou.Eymard.ea:18}; see also~\cite[Section 2]{Di-Pietro.Tittarelli:18}.
Boundary faces lying on $\partial\Omega$ and internal faces contained in
$\Omega$ are collected in the sets $\Fhb$ and $\Fhi$, respectively.
For every mesh element $T\in\Th$, we denote by $\Fh[T]$ the subset of $\Fh$ containing the faces that lie on the boundary $\partial T$ of $T$.
Symmetrically, for every $F\in\Fh$, we denote by $\Th[F]$ the subset of $\Th$ containing the elements that share $F$ (one if $F\in\Fhb$, two if $F\in\Fhi$).
For each mesh element $T\in\Th$ and face $F\in\Fh[T]$, $\normal_{TF}$ is the (constant) unit normal
vector to $F$ pointing out of $T$.
For any boundary face $F\in\Fhb$, we denote by $T_F$ the unique element of $\Th$ such that $F\in\Fh[T_F]$ and we let $\normal_F\coloneq\normal_{T_FF}$.

Our focus is on the so-called $h$-convergence analysis, so we consider a sequence of refined meshes that is regular in the sense of~\cite[Definition~3]{Di-Pietro.Tittarelli:18} with regularity parameter uniformly bounded away from zero.
The mesh regularity assumption implies, in particular, that the diameters of a mesh element and its faces are uniformly comparable and that the number of faces in $\Fh[T]$ is bounded above by an integer independent of $h$.

\subsection{Notation and basic results}\label{sec:setting:projectors}

We abridge into $a\lsim[C] b$ the inequality $a\le Cb$ with constant $C>0$ independent of $h$, $\nu$, and, for local inequalities, on the mesh element or face.
We also write $a\esim[C] b$ for $C^{-1}a\le b\le Ca$ with $C$ as above.
When the constant name is not relevant, we simply write $a\lsim b$ and $a\simeq b$.

Let $X$ denote a mesh element or face and, for an integer $l\ge 0$, denote by $\Poly{l}(X)$ the space spanned by the restrictions to $X$ of polynomials in the space variables of total degree $\le l$.
We denote by $\lproj[X]{l}:L^1(X)\to\Poly{l}(X)$ the $L^2$-orthogonal projector such that, for all $v\in L^1(X)$,
\begin{equation}\label{eq:lproj}
  \int_X(v-\lproj[X]{l}v)w = 0\qquad\forall w\in\Poly{l}(X).
\end{equation}
The vector- and matrix-valued $L^2$-orthogonal projectors, both denoted by $\vlproj[X]{l}$, are obtained applying $\lproj[X]{l}$ component-wise.
The following optimal $W^{s,p}$-approximation properties are proved in \cite[Appendix A.2]{Di-Pietro.Droniou:17} using the classical theory of \cite{Dupont.Scott:80} (cf. also \cite[Chapter 4]{Brenner.Scott:08}).
Let $s\in\{0,\ldots,l+1\}$ and $p\in[1,\infty]$.
It holds with hidden constant only depending on $d$, $l$, $s$, $p$, and the mesh regularity parameter:
For all $T\in\Th$, all $v\in W^{s,p}(T)$, and all $m\in\{0,\ldots,s\}$,
\begin{subequations}\label{eq:lproj.approx}
\begin{equation}\label{eq:lproj.approx:T}
  \seminorm[W^{m,p}(T)]{v-\lproj[T]{l}v} \lsim h_T^{s-m}\seminorm[W^{s,p}(T)]{v},
\end{equation}
and, if $s\ge 1$ and $m\le s-1$,
\begin{equation}\label{eq:lproj.approx:FT}
  h_T^{\frac1p}\seminorm[{W^{m,p}(\Fh[T]})]{v-\lproj[T]{l}v} \lsim h_T^{s-m}\seminorm[W^{s,p}(T)]{v},
\end{equation}
\end{subequations}
where $W^{m,p}(\Fh[T])$ is the space spanned by functions that are in $W^{m,p}(F)$ for all $F\in\Fh[T]$, endowed with the corresponding broken norm.

At the global level, the space of broken polynomial functions on $\Th$ of total degree $\le l$ is denoted by $\Poly{l}(\Th)$, and $\lproj{l}$ is the corresponding $L^2$-orthogonal projector.
Broken polynomial spaces form subspaces of the broken Sobolev spaces $W^{s,p}(\Th)\coloneq\left\{ v\in L^p(\Omega)\st v_{|T}\in W^{s,p}(T)\quad\forall T\in\Th\right\}$, which will be used to express the regularity requirements in consistency estimates.
We additionally set, as usual, $H^s(\Th)\coloneq W^{s,2}(\Th)$.

\subsection{Discrete velocity space}\label{sec:setting:dofs}

Let a polynomial degree $k\ge 0$ be fixed.
We define the following space of hybrid discrete velocity unknowns:
\begin{equation*}
  \label{eq:UT}
  \Uh\coloneq\left\{
  \uvec{v}_h=( (\vec{v}_T)_{T\in\Th}, (\vec{v}_F)_{F\in\Fh} )\st
  \vec{v}_T\in\Poly{k}(T)^d\quad\forall T\in\Th\mbox{ and }
  \vec{v}_F\in\Poly{k}(F)^d\quad\forall F\in\Fh  
  \right\}.
\end{equation*}
For all $\uvec{v}_h\in\Uh$, we denote by $\vec{v}_h\in\Poly{k}(\Th)^d$ the vector-valued broken polynomial function obtained patching element-based unknowns, that is
\begin{equation}\label{eq:vh}
  \vec{v}_{h|T}\coloneq\vec{v}_T\qquad\forall T\in\Th.
\end{equation}
The restrictions of $\Uh$ and $\uvec{v}_h\in\Uh$ to a generic mesh element $T\in\Th$ are respectively denoted by $\UT$ and $\uvec{v}_T=(\vec{v}_T,(\vec{v}_F)_{F\in\Fh[T]})$.
The vector of discrete variables corresponding to a smooth function on $\Omega$ is obtained via the global interpolation operator $\Ih:H^1(\Omega)^d\to\Uh$ such that, for all $\vec{v}\in H^1(\Omega)^d$,
$$
\Ih\vec{v} \coloneq ( (\vlproj[T]{k}\vec{v}_{|T})_{T\in\Th}, (\vlproj[F]{k}\vec{v}_{|F})_{F\in\Fh}).
$$
Its restriction to a generic mesh element $T\in\Th$ is $\IT:H^1(T)^d\to\UT$ such that, for all $\vec{v}\in H^1(T)^d$,
\begin{equation}\label{eq:IT}
  \IT\vec{v} = ( \vlproj[T]{k}\vec{v}, (\vlproj[F]{k}\vec{v}_{|F})_{F\in\Fh[T]} ).
\end{equation}
We equip $\Uh$ with the discrete $H_0^1$-like norm such that, for all $\uvec{v}_h\in\Uh$,
\begin{equation}\label{eq:norm.1h}
  \norm[1,h]{\uvec{v}_h}\coloneq\Bigg(
  \sum_{T\in\Th}\norm[1,T]{\uvec{v}_T}^2
  + \sum_{F\in\Fhb}h_F^{-1}\norm[L^2(F)^d]{\vec{v}_F}^2
  \Bigg)^{\frac12},
\end{equation}
where, for all $T\in\Th$,
\begin{equation}\label{eq:norm.1T}
  \norm[1,T]{\uvec{v}_T}\coloneq\left(
  \norm[L^2(T)^{d\times d}]{\GRAD\vec{v}_T}^2
  + \sum_{F\in\Fh[T]}h_F^{-1}\norm[L^2(F)^d]{\vec{v}_F-\vec{v}_T}^2
  \right)^{\frac12}.
\end{equation}
In the analysis, we will frequently invoke the following discrete Sobolev embeddings in $\Uh$, which state that, up to a certain $q$ depending on the space dimension, the $L^q$-norms of the broken polynomial function \eqref{eq:vh} obtained patching element unknowns are controlled (uniformly with respect to the meshsize) by the discrete $H_0^1$-like norm.
\begin{proposition}[Discrete Sobolev embeddings]\label{prop:sobolev.embeddings}
  Let $1\le q<\infty$ if $d=2$ and $1\le q\le 6$ if $d=3$.
  Then, it holds with hidden constant depending only on $\Omega$, $k$, $q$, and the mesh regularity parameter:
  \begin{equation}\label{eq:sobolev.embeddings}
    \forall\uvec{v}_h\in\Uh,\qquad
    \norm[L^q(\Omega)^d]{\vec{v}_h}\lsim\norm[1,h]{\uvec{v}_h}.
  \end{equation}
\end{proposition}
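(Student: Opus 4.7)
The plan is to reduce the claim to the analogous discrete Sobolev embedding for broken polynomial functions endowed with a DG-type jump-penalty norm, and then to bound that norm by $\norm[1,h]{\uvec{v}_h}$.

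On $\Poly{k}(\Th)^d$ I would introduce the auxiliary norm
$$
\norm[{\rm dG}]{\vec{w}}^2
\coloneq
\sum_{T\in\Th}\norm[L^2(T)^{d\times d}]{\GRAD\vec{w}}^2
+ \sum_{F\in\Fhi} h_F^{-1}\norm[L^2(F)^d]{\jump{\vec{w}}}^2
+ \sum_{F\in\Fhb} h_F^{-1}\norm[L^2(F)^d]{\vec{w}}^2,
$$
where $\jump{\vec{w}}\coloneq\vec{w}_{|T^+}-\vec{w}_{|T^-}$ denotes the jump across an interior face. For this norm, the embedding $\norm[L^q(\Omega)^d]{\vec{w}}\lsim\norm[{\rm dG}]{\vec{w}}$ in the range of $q$ stated in the proposition is a classical result on broken polynomial spaces; its proof proceeds by lifting $\vec{w}$ to a conforming function with error controlled by the face jumps and then invoking the continuous Sobolev embedding $H^1(\Omega)\hookrightarrow L^q(\Omega)$.

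It then suffices to show $\norm[{\rm dG}]{\vec{v}_h}\lsim\norm[1,h]{\uvec{v}_h}$. The element-gradient contributions coincide by definition. For an interior face $F$ shared by $T^+,T^-\in\Th[F]$, inserting $\pm\vec{v}_F$ and the triangle inequality give
$$
\norm[L^2(F)^d]{\jump{\vec{v}_h}}^2
\le 2\sum_{T\in\Th[F]}\norm[L^2(F)^d]{\vec{v}_{T|F}-\vec{v}_F}^2,
$$
so that, after weighting by $h_F^{-1}$, summing over $F\in\Fhi$, and exchanging the order of summation, this contribution is absorbed into $\sum_{T\in\Th}\norm[1,T]{\uvec{v}_T}^2$ (mesh regularity keeping the multiplicity with which each element appears uniformly bounded). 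For $F\in\Fhb$, writing $\vec{v}_h=\vec{v}_F+(\vec{v}_{T_F|F}-\vec{v}_F)$ on $F$ splits the boundary contribution into a part already appearing in the boundary sum of $\norm[1,h]{\uvec{v}_h}^2$ and a part absorbed by $\norm[1,T_F]{\uvec{v}_{T_F}}^2$. Combining these bounds with the embedding from the preceding paragraph yields \eqref{eq:sobolev.embeddings}.

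The genuinely nontrivial ingredient is the broken-polynomial Sobolev embedding invoked in the first step; the HHO-specific work reduces to the routine triangle-inequality manipulations just described, so no real obstacle is expected beyond keeping track of the multiplicities when passing between face-indexed and element-indexed sums.
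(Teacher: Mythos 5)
Your proposal is correct and follows essentially the same route as the paper: the appendix proof also invokes the DG discrete Sobolev embedding of \cite[Theorem 2.1]{Di-Pietro.Ern:10} for the jump-penalty norm and then inserts $\pm\vec{v}_F$ with triangle inequalities to absorb the interior jumps and boundary traces into $\norm[1,h]{\uvec{v}_h}$. No substantive difference.
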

\begin{proof}
  See \ref{sec:convergence.analysis:preliminary.results:sobolev.embeddings}.
  We also refer to \cite[Proposition 5.4]{Di-Pietro.Droniou:17} for the proof of a similar result in the subspace \eqref{eq:Uhz} of $\Uh$ with strongly enforced boundary conditions.  
\end{proof}
An immediate consequence of Proposition \ref{prop:sobolev.embeddings} is that the map $\norm[1,h]{{\cdot}}$ is a norm on $\Uh$.

\subsection{Local reconstructions}

We next introduce the local reconstructions at the core of the HHO method along with their relevant properties.
Throughout the rest of this section, we work on a fixed mesh element $T\in\Th$.

\subsubsection{Gradient}

Inspired by the principles of \cite[Sections 4.3.1.1 and 4.4.2]{Di-Pietro.Tittarelli:18}, for any integer $\ell\ge 0$ we define the local gradient reconstruction $\GT[\ell]:\UT\to\Poly{\ell}(T)^{d\times d}$ such that, for all $\uvec{v}_T\in\UT$ and all $\matr{\tau}\in\Poly{\ell}(T)^{d\times d}$,
\begin{subequations}
  \begin{align}\label{eq:GT.bis}
    \int_T\GT[\ell]\uvec{v}_T\SSCAL\matr{\tau}
    &= -\int_T\vec{v}_T\SCAL(\DIV\matr{\tau})
    + \sum_{F\in\Fh[T]}\int_F\vec{v}_F\SCAL\matr{\tau}\normal_{TF}
    \\ \label{eq:GT}
    &= \int_T\GRAD\vec{v}_T\SSCAL\matr{\tau}
    + \sum_{F\in\Fh[T]}\int_F (\vec{v}_F-\vec{v}_T)\SCAL\matr{\tau}\normal_{TF}.
  \end{align}
\end{subequations}
The right-hand side of \eqref{eq:GT.bis} mimicks an integration by parts formula where the role of the function represented by $\uvec{v}_T$ is played by $\vec{v}_T$ in the volumetric term and by $\vec{v}_F$ in the boundary term.
The reformulation \eqref{eq:GT}, obtained integrating by parts the first term in \eqref{eq:GT.bis}, shows that $\GT\uvec{v}_T$ stems from two contributions: the gradient of the element-based unknown and a boundary correction involving the differences between element- and face-based unknowns.
If $\ell\ge k$, it follows from \cite[Eq. (22)]{Di-Pietro.Krell:18} that, for all $\vec{v}\in H^1(T)^d$,
\begin{equation}\label{eq:GT.orth}
  \int_T(\GT[\ell]\IT\vec{v}-\GRAD\vec{v})\SSCAL\matr{\tau}=0\qquad\forall\matr{\tau}\in\Poly{k}(T)^{d\times d}.
\end{equation}
Comparing with the definition \eqref{eq:lproj} of the $L^2$-orthogonal projector, this gives, in particular,
\begin{equation}\label{eq:GT:commuting}
  \GT[k]\IT\vec{v}=\mlproj[T]{k}(\GRAD\vec{v}).
\end{equation}
The following approximation properties for $(\GT[\ell]\circ\IT)$ have been proved in \cite[Proposition 1]{Di-Pietro.Krell:18}:
For all $\vec{v}\in H^m(T)^d$ with $m=\ell+2$ if $\ell\le k$, $m=k+1$ otherwise,
\begin{equation}\label{eq:approx.GT}
  \norm[L^2(T)^{d\times d}]{\GT[\ell]\IT\vec{v}-\GRAD\vec{v}}
  + h_T^{\frac12}\norm[L^2(\partial T)^{d\times d}]{\GT[\ell]\IT\vec{v}-\GRAD\vec{v}}
  \lsim h_T^{m-1}\seminorm[H^m(T)^d]{\vec{v}}.
\end{equation}

\subsubsection{Velocity}

From $\GT$, we can obtain a reconstruction $\rT:\UT\to\Poly{k+1}(T)^d$ of the velocity one order higher than element-based unknowns as follows:
For all $\uvec{v}_T\in\UT$, $\rT\uvec{v}_T$ is such that
\[
  \int_T(\GRAD\rT\uvec{v}_T-\GT\uvec{v}_T)\SSCAL\GRAD\vec{w}=0\quad\forall\vec{w}\in\Poly{k+1}(T)^d,\qquad
  \int_T(\rT\uvec{v}_T-\vec{v}_T)=\vec{0},
\]
that is, the gradient of $\rT\uvec{v}_T$ is the $L^2$-orthogonal projection of $\GT\uvec{v}_T$ on $\GRAD\Poly{k+1}(T)^d$, and $\rT\uvec{v}_T$ has the same mean value over $T$ as the element-based unknown $\vec{v}_T$.
We note for further use the following characterisation of $\GRAD\rT\uvec{v}_T$, obtained writing \eqref{eq:GT} for $\matr{\tau}=\GRAD\vec{w}$:
\begin{equation}\label{eq:characterisation.rT}
  \int_T\GRAD\rT\uvec{v}_T\SSCAL\GRAD\vec{w}
  = \int_T\GRAD\vec{v}_T\SSCAL\GRAD\vec{w} + \sum_{F\in\Fh[T]}\int_F(\vec{v}_F-\vec{v}_T)\SCAL\GRAD\vec{w}\normal_{TF}.
\end{equation}
This velocity reconstruction will play a key role in the approximation of viscous terms, as detailed in Section \ref{sec:diff}.

\subsubsection{Divergence}

For any integer $\ell\ge 0$, a discrete divergence reconstruction $\DT[\ell]:\UT\to\Poly{\ell}(T)$ is obtained setting, for all $\uvec{v}_T\in\UT$,
$$
\DT[\ell]\uvec{v}_T\coloneq\tr(\GT[\ell]\uvec{v}_T).
$$
This discrete divergence will be used with $\ell=k$ in the pressure-velocity coupling (see Section \ref{sec:disc.div}) and with $\ell=2k$ to incorporate Temam's device for stability in the convection term (see Section \ref{sec:disc.prob:adv.rea}).
An immediate consequence of \eqref{eq:approx.GT} is that, for any $\vec{v}\in H^m(T)^d$ with $m$ as in that equation, it holds that
\begin{equation}\label{eq:approx.DT}
  \norm[L^2(T)]{\DT[\ell]\IT\vec{v}-\DIV\vec{v}}
  \lsim h_T^{m-1}\seminorm[H^m(T)^d]{\vec{v}}.
\end{equation}
For further use, we record the following characterisation of $\DT[\ell]$, obtained from \eqref{eq:GT} with $\matr{\tau}=q\Id$:
For all $\uvec{v}_T\in\UT$,
\begin{equation}\label{eq:DT}
  \int_T\DT[\ell]\uvec{v}_T~q
  =\int_T(\DIV\vec{v}_T) q
  + \sum_{F\in\Fh[T]}\int_F (\vec{v}_F-\vec{v}_T)\SCAL\normal_{TF}q
  \qquad\forall q\in\Poly{\ell}(T).
\end{equation}
Writing this formula for $\ell=2k$ and $\ell=k$ and letting, in both cases, $q$ span $\Poly{k}(T)$, it is inferred that, for any $\uvec{v}_T\in\UT$,
\begin{equation}\label{eq:DT2k.DTk}
  \lproj[T]{k}(\DT[2k]\uvec{v}_T) = \DT\uvec{v}_T.
\end{equation}
Another consequence of \eqref{eq:GT:commuting} together with the linearity of the $L^2$-orthogonal projector is the following relation:
For any $\vec{v}\in H^1(T)^d$,
\begin{equation}\label{eq:DT:commuting}
  \DT\IT\vec{v} = \lproj[T]{k}(\DIV\vec{v}).
\end{equation}

\subsubsection{Directional derivative}

For the discretisation of the convective term, we also need a reconstruction of directional derivatives inspired by \cite[Eq. (12)]{Di-Pietro.Droniou.ea:15}.
The main novelty is here that the advective velocity field is represented by a vector of discrete velocity unknowns $\uvec{w}_T\in\UT$ rather than a smooth field on $T$; see also \cite{Anderson.Droniou:18} for similar developments in the context of miscible displacements in porous media.
Specifically, the directional derivative reconstruction $\GwT{{\cdot}}:\UT\to\Poly{k}(T)^d$ is such that, for all $\uvec{v}_T\in\UT$,
\begin{equation}\label{eq:GwT}
  \int_T\GwT{\uvec{v}_T}\SCAL\vec{z}
  = \int_T(\vec{w}_T\SCAL\GRAD)\vec{v}_T\SCAL\vec{z}
  + \sum_{F\in\Fh[T]}\int_F(\vec{w}_F\SCAL\normal_{TF})(\vec{v}_F-\vec{v}_T)\SCAL\vec{z}
  \qquad\forall\vec{z}\in\Poly{k}(T)^d.
\end{equation}%
In the above expression, the role of the advective velocity inside the element and on its faces is played by $\vec{w}_T$ and $\vec{w}_F$, respectively.
For all $\vec{z}\in\Poly{k}(T)^d$, writing \eqref{eq:GT} for $\ell=2k$ and $\matr{\tau}=\vec{z}\otimes\vec{w}_T\coloneq(z_i w_{T,j})_{1\le i,j\le d}$ and comparing with \eqref{eq:GwT}, one can see that it holds
\begin{equation}\label{eq:GwT.GT}
  \int_T\GwT{\uvec{v}_T}\SCAL\vec{z}
  = \int_T(\vec{w}_T\SCAL\GT[2k])\uvec{v}_T\SCAL\vec{z}
  + \sum_{F\in\Fh[T]}\int_F(\vec{w}_F-\vec{w}_T)\SCAL\normal_{TF}(\vec{v}_F-\vec{v}_T)\SCAL\vec{z},
\end{equation}
where, recalling that $(\GT[2k]\uvec{v}_T)_{ij}$ approximates the partial derivative with respect to the $j$th space variable of the $i$th component of the function represented by $\uvec{v}_T$, we have set $(\vec{w}_T\SCAL\GT[2k])\uvec{v}_T\coloneq\left(\sum_{j=1}^d w_{T,j}(\GT[2k]\uvec{v}_T)_{ij}\right)_{i=1,\ldots,d}$. This shows that $\GwT{\uvec{v}_T}$ differs from $(\vec{w}_T\SCAL\GT[2k])\uvec{v}_T$ in that $\vec{w}_F$ replaces $\vec{w}_T$ in the boundary term.
The approximation properties of the discrete directional derivative relevant in the analysis are summarised in the following proposition.
\begin{proposition}[Approximation properties of the discrete directional derivative]\label{prop:GwT}
  It holds:
  For all $T\in\Th$, all $\vec{v}\in W^{1,4}(T)\cap H^{k+1}(T)^d$, and all $\vec{z}\in\Poly{k}(T)^d$,
  \begin{equation}\label{eq:approx.GwT}
    \left|
    \int_T\left(\GwT[\IT\vec{v}]{\IT\vec{v}}-(\vec{v}\SCAL\GRAD)\vec{v}\right)\SCAL\vec{z}
    \right|
    \lsim h_T^{k+1}\seminorm[H^{k+1}(T)^d]{\vec{v}}\seminorm[W^{1,4}(T)^d]{\vec{v}}\norm[L^4(T)^d]{\vec{z}}.
  \end{equation}
\end{proposition}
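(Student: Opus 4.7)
The plan is to proceed by direct substitution of \eqref{eq:GwT} combined with the decomposition $\vec v=\vec\pi+\vec e$, where I set $\vec\pi\coloneq\vlproj[T]{k}\vec v$, $\vec e\coloneq\vec v-\vec\pi$, and $\vec v_F\coloneq\vlproj[F]{k}\vec v_{|F}$ for each $F\in\Fh[T]$. Taking $\uvec w_T=\uvec v_T=\IT\vec v$ in \eqref{eq:GwT} and expanding $(\vec v\SCAL\GRAD)\vec v=((\vec\pi+\vec e)\SCAL\GRAD)(\vec\pi+\vec e)$, the ``diagonal'' contribution $\int_T(\vec\pi\SCAL\GRAD)\vec\pi\SCAL\vec z$ cancels, leaving
\begin{equation*}
\int_T\Big(\GwT[\IT\vec v]{\IT\vec v}-(\vec v\SCAL\GRAD)\vec v\Big)\SCAL\vec z=\term_A+\term_B+\term_C+\term_D,
\end{equation*}
with $\term_A\coloneq\sum_{F\in\Fh[T]}\int_F(\vec v_F\SCAL\normal_{TF})(\vec v_F-\vec\pi)\SCAL\vec z$ coming from the face boundary of $\GwT$, and $\term_B\coloneq-\int_T(\vec\pi\SCAL\GRAD)\vec e\SCAL\vec z$, $\term_C\coloneq-\int_T(\vec e\SCAL\GRAD)\vec\pi\SCAL\vec z$, $\term_D\coloneq-\int_T(\vec e\SCAL\GRAD)\vec e\SCAL\vec z$ the three volume residuals.

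The ``easy'' terms $\term_C$ and $\term_D$ yield the claimed bound by a single H\"older inequality and \eqref{eq:lproj.approx}. For $\term_C$ I use the triple $(L^2,L^4,L^4)$ together with $\norm[L^2(T)^d]{\vec e}\lsim h_T^{k+1}\seminorm[H^{k+1}(T)^d]{\vec v}$ and the $L^4$-bound $\norm[L^4(T)^{d\times d}]{\GRAD\vec\pi}\lsim\seminorm[W^{1,4}(T)^d]{\vec v}$, the latter obtained from $\GRAD\vec\pi=\GRAD\vec v-\GRAD\vec e$ and \eqref{eq:lproj.approx:T} with $(s,m,p)=(1,1,4)$. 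For $\term_D$ I use $(L^4,L^2,L^4)$ together with $\norm[L^4(T)^d]{\vec e}\lsim h_T\seminorm[W^{1,4}(T)^d]{\vec v}$ and $\norm[L^2(T)^{d\times d}]{\GRAD\vec e}\lsim h_T^k\seminorm[H^{k+1}(T)^d]{\vec v}$.

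The crux is $\term_B$, which in isolation is only $\mathcal O(h_T^k)$ since $\vec\pi\to\vec v$ in norm as $h_T\to 0$, so the missing power of $h_T$ must come from a cancellation with $\term_A$. I integrate by parts in $\term_B$, transferring $\partial_j$ off $e_i$, which yields
\begin{equation*}
\term_B=\int_T(\DIV\vec\pi)(\vec e\SCAL\vec z)+\int_T\big((\vec\pi\SCAL\GRAD)\vec z\big)\SCAL\vec e-\sum_{F\in\Fh[T]}\int_F(\vec\pi\SCAL\normal_{TF})(\vec e\SCAL\vec z).
\end{equation*}
Writing $\vec v_F\SCAL\normal_{TF}=(\vec v_F-\vec\pi)\SCAL\normal_{TF}+\vec\pi\SCAL\normal_{TF}$ in $\term_A$ and $\vec e=(\vec v-\vec v_F)+(\vec v_F-\vec\pi)$ on each face, the face contributions coming from $\term_A$ and from the IBP regroup as a purely quadratic face error $\sum_F\int_F((\vec v_F-\vec\pi)\SCAL\normal_{TF})(\vec v_F-\vec\pi)\SCAL\vec z$ plus the ``orthogonality term'' $-\sum_F\int_F(\vec\pi\SCAL\normal_{TF})(\vec v-\vec v_F)\SCAL\vec z$. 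The quadratic face error is controlled via H\"older $(L^2,L^4,L^4)$, \eqref{eq:lproj.approx:FT} combined with the $L^p$-stability of $\vlproj[F]{k}$ (yielding $\norm[L^2(F)^d]{\vec v_F-\vec\pi}\lsim h_T^{k+1/2}\seminorm[H^{k+1}(T)^d]{\vec v}$ and $\norm[L^4(F)^d]{\vec v_F-\vec\pi}\lsim h_T^{3/4}\seminorm[W^{1,4}(T)^d]{\vec v}$), and the discrete trace $\norm[L^4(F)^d]{\vec z}\lsim h_T^{-1/4}\norm[L^4(T)^d]{\vec z}$, whose product is exactly $h_T^{k+1}\seminorm[H^{k+1}(T)^d]{\vec v}\seminorm[W^{1,4}(T)^d]{\vec v}\norm[L^4(T)^d]{\vec z}$. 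The orthogonality face term and the two volume residues are closed by exploiting the $L^2$-orthogonalities $\vec e\perp\Poly{k}(T)^d$ and $\vec v-\vec v_F\perp\Poly{k}(F)^d$: substituting $\vec\pi=\vec v-\vec e$ and splitting $\vec v=\vlproj[T]{0}\vec v+(\vec v-\vlproj[T]{0}\vec v)$ inside the test factors, the constant-in-$\vec v$ contribution is eliminated by orthogonality, while the fluctuation is controlled by Poincar\'e--Wirtinger and, when needed, the inverse inequality $\norm[L^4(T)^{d\times d}]{\GRAD\vec z}\lsim h_T^{-1}\norm[L^4(T)^d]{\vec z}$, which together deliver the missing factor of $h_T$. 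The main obstacle is this polynomial-degree mismatch---$(\vec\pi\SCAL\GRAD)\vec z\in\Poly{2k-1}(T)^d$ and $(\vec\pi\SCAL\normal_{TF})\vec z\in\Poly{2k}(F)^d$ sit above $\Poly{k}$, so the orthogonalities do not annihilate them outright---whose resolution forces the combined use of the add-and-subtract trick with Poincar\'e and inverse estimates rather than a direct H\"older bound.
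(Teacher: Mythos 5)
Your proof is correct, and it reaches the estimate by a route that is mechanically different from the paper's, though built on the same underlying cancellation. The paper starts from the reformulation \eqref{eq:GwT.GT}, so that the leading error collapses into the single term $\int_T(\GT[2k]\IT\vec{v}-\GRAD\vec{v})\SSCAL(\vec z\otimes\vlproj[T]{k}\vec v)$, which is then handled by the orthogonality \eqref{eq:GT.orth} (inserting $\vlproj[T]{0}\vec v$, exactly your subtract-the-mean trick) together with the already-proved approximation property \eqref{eq:approx.GT} of the gradient reconstruction; your terms $\term_C+\term_D$ coincide with the paper's $\term_2$, your quadratic face error is the paper's $\term_3$, and your $\term_A+\term_B$ (after removing the quadratic face piece) is an unpacked version of the paper's $\term_1$. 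What you do differently is to avoid \eqref{eq:GwT.GT}, \eqref{eq:GT.orth} and \eqref{eq:approx.GT} altogether and re-derive the needed cancellation by hand: integration by parts on $-\int_T(\vec\pi\SCAL\GRAD)\vec e\SCAL\vec z$, regrouping with the face term of \eqref{eq:GwT}, and then closing via the $L^2(T)$- and $L^2(F)$-orthogonalities of the projectors combined with the inverse inequality $\norm[L^4(T)^{d\times d}]{\GRAD\vec z}\lsim h_T^{-1}\norm[L^4(T)^d]{\vec z}$ (which the paper never needs). I checked the exponents in each of your residual terms --- the quadratic face error gives $h_T^{k+\frac12}\cdot h_T^{\frac34}\cdot h_T^{-\frac14}$, the face orthogonality term gives $h_T^{\frac34}\cdot h_T^{k+\frac12}\cdot h_T^{-\frac14}$, and the volume term $\int_T((\vec\pi\SCAL\GRAD)\vec z)\SCAL\vec e$ gives $h_T\cdot h_T^{-1}\cdot h_T^{k+1}$ --- and they all land on $h_T^{k+1}$ with the claimed seminorms; note also that the term $\int_T(\DIV\vec\pi)(\vec e\SCAL\vec z)$ needs no orthogonality at all, a plain H\"older bound with $\norm[L^4(T)]{\GRAD\vec\pi}\lsim\seminorm[W^{1,4}(T)^d]{\vec v}$ and $\norm[L^2(T)^d]{\vec e}\lsim h_T^{k+1}\seminorm[H^{k+1}(T)^d]{\vec v}$ already suffices. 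Your approach is more self-contained and elementary; the paper's is shorter because the orthogonality and approximation properties of $\GT[2k]\circ\IT$ absorb the integration by parts and the face bookkeeping once and for all.
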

\begin{proof}
  See \ref{sec:convegence.analysis:preliminary.results:GwT}.
\end{proof}
\subsection{A discrete integration by parts formula}\label{sec:ibp}

In this section we prove a global discrete integration by parts formula which plays the role of \eqref{eq:ibp.cont} at the discrete level. It also establishes a link between the discrete directional derivative and divergence reconstructions.
As in the continuous setting, this formula plays a central role in proving a non-dissipation property for the convective trilinear form (see point (i) in Lemma \ref{lem:th}) and justifies the specific formulation adopted for Temam's device in \eqref{eq:tT} below.
\begin{proposition}[Discrete integration by parts formula]\label{prop:ibp}
  It holds, for all $\uvec{w}_h,\uvec{v}_h,\uvec{z}_h\in\Uh$,
  \begin{multline}\label{eq:ibp}
    \sum_{T\in\Th}\int_T\left(
      \GwT{\uvec{v}_T}\SCAL\vec{z}_T + \vec{v}_T\SCAL\GwT{\uvec{z}_T} + \DT[2k]\uvec{w}_T(\vec{v}_T\SCAL\vec{z}_T)
      \right)
    \\
    =
    -\sum_{T\in\Th}\sum_{F\in\Fh[T]}\int_F (\vec{w}_F\SCAL\normal_{TF})(\vec{v}_F-\vec{v}_T)\SCAL(\vec{z}_F-\vec{z}_T)
    +\sum_{F\in\Fhb}\int_F(\vec{w}_F\SCAL\normal_F)\vec{v}_F\SCAL\vec{z}_F.
  \end{multline}
\end{proposition}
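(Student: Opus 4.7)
The plan is to expand each of the three terms on the left-hand side via the defining relations of $\GwT{\cdot}$ and $\DT[2k]$, then collect volume contributions and boundary contributions separately.

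First I would apply the definition \eqref{eq:GwT} of the directional derivative reconstruction with test function $\vec{z}_T\in\Poly{k}(T)^d$ to obtain
\[
  \int_T\GwT{\uvec{v}_T}\SCAL\vec{z}_T
  = \int_T(\vec{w}_T\SCAL\GRAD)\vec{v}_T\SCAL\vec{z}_T
  + \sum_{F\in\Fh[T]}\int_F(\vec{w}_F\SCAL\normal_{TF})(\vec{v}_F-\vec{v}_T)\SCAL\vec{z}_T,
\]
and symmetrically for $\int_T \vec{v}_T\SCAL\GwT{\uvec{z}_T}$. For the divergence term, the crucial point is that $\vec{v}_T\SCAL\vec{z}_T\in\Poly{2k}(T)$, so I can legitimately use it as test function $q$ in the characterisation \eqref{eq:DT} of $\DT[2k]$, yielding
\[
  \int_T\DT[2k]\uvec{w}_T\,(\vec{v}_T\SCAL\vec{z}_T)
  = \int_T(\DIV\vec{w}_T)(\vec{v}_T\SCAL\vec{z}_T)
  + \sum_{F\in\Fh[T]}\int_F(\vec{w}_F-\vec{w}_T)\SCAL\normal_{TF}\,(\vec{v}_T\SCAL\vec{z}_T).
\]
This is the one spot where the choice of degree $\ell=2k$ for Temam's divergence is essential; with $\ell=k$ the identity would not close.

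Next I would add the three expansions element by element. The three volume integrals combine into the continuous pointwise identity underlying \eqref{eq:ibp.cont} applied to the polynomials $\vec{w}_T,\vec{v}_T,\vec{z}_T$ on $T$; a standard integration by parts on $T$ converts them into $\sum_{F\in\Fh[T]}\int_F(\vec{w}_T\SCAL\normal_{TF})(\vec{v}_T\SCAL\vec{z}_T)$. Grouping this with the $(\vec{w}_F-\vec{w}_T)\SCAL\normal_{TF}$ boundary term from the divergence expansion, the $\vec{w}_T$ pieces cancel and leave $\sum_{F\in\Fh[T]}\int_F(\vec{w}_F\SCAL\normal_{TF})(\vec{v}_T\SCAL\vec{z}_T)$. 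Adding the remaining two boundary terms from the $\GwT{\cdot}$ expansions, the face integrand reduces to
\[
  (\vec{w}_F\SCAL\normal_{TF})\bigl[\vec{v}_F\SCAL\vec{z}_T+\vec{v}_T\SCAL\vec{z}_F-\vec{v}_T\SCAL\vec{z}_T\bigr]
  = (\vec{w}_F\SCAL\normal_{TF})\bigl[\vec{v}_F\SCAL\vec{z}_F-(\vec{v}_F-\vec{v}_T)\SCAL(\vec{z}_F-\vec{z}_T)\bigr],
\]
where the last equality is the straightforward algebraic identity one checks by expanding.

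Finally I would sum over $T\in\Th$. The jump-like piece $-(\vec{w}_F\SCAL\normal_{TF})(\vec{v}_F-\vec{v}_T)\SCAL(\vec{z}_F-\vec{z}_T)$ stays as in the claim. For the single-valued face integrand $(\vec{w}_F\SCAL\normal_{TF})\vec{v}_F\SCAL\vec{z}_F$, interior faces appear twice with opposite normals $\normal_{TF}$ and cancel, while each boundary face contributes once with $\normal_{TF}=\normal_F$, giving exactly the boundary sum on the right-hand side of \eqref{eq:ibp}. The argument is essentially bookkeeping; the only conceptual obstacle is recognising that the degree-$2k$ divergence reconstruction is precisely what allows $\vec{v}_T\SCAL\vec{z}_T$ to be used as a test function, which is why Temam's device is coded in \eqref{eq:tT} via $\DT[2k]$ rather than $\DT[k]$.
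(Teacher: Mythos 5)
Your proof is correct and follows essentially the same route as the paper's: both rest on expanding the directional derivatives via \eqref{eq:GwT}, testing \eqref{eq:DT} with $q=\vec{v}_T\SCAL\vec{z}_T\in\Poly{2k}(T)$ (the point where $\ell=2k$ is essential, as you rightly emphasise), an element-wise integration by parts, the same algebraic identity $\vec{v}_F\SCAL\vec{z}_T+\vec{v}_T\SCAL\vec{z}_F-\vec{v}_T\SCAL\vec{z}_T=\vec{v}_F\SCAL\vec{z}_F-(\vec{v}_F-\vec{v}_T)\SCAL(\vec{z}_F-\vec{z}_T)$, and the cancellation of the single-valued face term on interior faces. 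The only difference is organisational: you expand all three left-hand-side terms symmetrically and invoke the continuous identity \eqref{eq:ibp.cont} on each element, whereas the paper starts from $\GwT{\uvec{v}_T}\SCAL\vec{z}_T$ alone and recovers the other two terms by integration by parts; the ingredients and the bookkeeping are otherwise identical.
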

\begin{remark}[Comparison with \eqref{eq:ibp.cont}]
  Compared with its continuous counterpart \eqref{eq:ibp.cont}, formula \eqref{eq:ibp} contains one additional term in the right-hand side where the differences between face and element unknowns in $\uvec{v}_h$ and $\uvec{z}_h$ appear. This term reflects the non-conformity of the HHO space.
\end{remark}
\begin{proof}[Proof of Proposition \ref{prop:ibp}]
  Let an element $T\in\Th$ be fixed.
  Expanding first $\GwT{\uvec{v}_T}$ according to its definition \eqref{eq:GwT} with $\vec{z}=\vec{z}_T$, then integrating by parts the first volumetric term, we obtain 
  \begin{equation}\label{eq:ibp:basic}
    \begin{aligned}
      \int_T \GwT{\uvec{v}_T}\SCAL\vec{z}_T
      &= \int_T(\vec{w}_T\SCAL\GRAD)\vec{v}_T\SCAL\vec{z}_T
      + \sum_{F\in\Fh[T]}\int_F(\vec{w}_F\SCAL\normal_{TF})(\vec{v}_F-\vec{v}_T)\SCAL\vec{z}_T
      \\
      &=
      - \int_T\vec{v}_T\SCAL(\vec{w}_T\SCAL\GRAD)\vec{z}_T
      - \int_T (\DIV\vec{w}_T)\vec{v}_T\SCAL\vec{z}_T
      \\
      &\quad + \sum_{F\in\Fh[T]}\int_F \left[
        (\vec{w}_F\SCAL\normal_{TF})(\vec{v}_F\SCAL\vec{z}_T)
        - (\vec{w}_F\SCAL\normal_{TF})(\vec{v}_T\SCAL\vec{z}_T)
        + (\vec{w}_T\SCAL\normal_{TF})(\vec{v}_T\SCAL\vec{z}_T)
        \right]
      \\
      &\eqcolon\term_1+\term_2+\term_3.
    \end{aligned}
  \end{equation}
  Using again \eqref{eq:GwT} this time with $\uvec{v}_T=\uvec{z}_T$ and $\vec{z}=\vec{v}_T$, we obtain for the first term
  \begin{equation}\label{eq:ibp:T1}
    \term_1 =
    - \int_T\GwT{\uvec{z}_T}\SCAL\vec{v}_T
    + \sum_{F\in\Fh[T]}\int_F(\vec{w}_F\SCAL\normal_{TF})(\vec{z}_F-\vec{z}_T)\SCAL\vec{v}_T.
  \end{equation}
  Invoking the characterisation \eqref{eq:DT} of the discrete divergence reconstruction with $\ell=2k$ and $q=\vec{v}_T\SCAL\vec{z}_T$, we get for the second term
  \begin{equation}\label{eq:ibp:T2}
    \term_2 = -\int_T\DT[2k]\uvec{w}_T(\vec{v}_T\SCAL\vec{z}_T)
    + \sum_{F\in\Fh[T]}\int_F (\vec{w}_F-\vec{w}_T)\SCAL\normal_{TF}(\vec{v}_T\SCAL\vec{z}_T).
  \end{equation}
  Plugging \eqref{eq:ibp:T1}--\eqref{eq:ibp:T2} into \eqref{eq:ibp:basic} and rearranging, we obtain
  \begin{multline*}
    \int_T\left(
    \GwT{\uvec{v}_T}\SCAL\vec{z}_T + \vec{v}_T\SCAL\GwT{\uvec{z}_T} + \DT[2k]\uvec{w}_T(\vec{v}_T\SCAL\vec{z}_T)
    \right)
    \\
    = \sum_{F\in\Fh[T]}\int_F(\vec{w}_F\SCAL\normal_{TF})(\vec{z}_F\SCAL\vec{v}_T - \vec{z}_T\SCAL\vec{v}_T+\vec{z}_T\SCAL\vec{v}_F).
  \end{multline*}
  Summing the above equality over $T\in\Th$ and adding the quantity
  \begin{equation}\label{eq:ibp:bnd}
    -\sum_{T\in\Th}\sum_{F\in\Fh[T]}\int_F(\vec{w}_F\SCAL\normal_{TF})(\vec{v}_F\SCAL\vec{z}_F)
    +\sum_{F\in\Fhb}\int_F(\vec{w}_F\SCAL\normal_F)(\vec{v}_F\SCAL\vec{z}_F) = 0,
  \end{equation}
  the conclusion follows after observing that $\vec{z}_F\SCAL\vec{v}_T - \vec{z}_T\SCAL\vec{v}_T+\vec{z}_T\SCAL\vec{v}_F-\vec{z}_F\SCAL\vec{v}_F=-(\vec{v}_F-\vec{v}_T)\SCAL(\vec{z}_F-\vec{z}_T)$.
  Formula \eqref{eq:ibp:bnd} is justified observing that, for any internal face $F\in\Fhi$ such that $F\in\Fh[T_1]\cap\Fh[T_2]$ for distinct mesh elements $T_1,T_2\in\Th$, it holds that $(\vec{w}_F\SCAL\normal_{T_1F})(\vec{v}_F\SCAL\vec{z}_F)+(\vec{w}_F\SCAL\normal_{T_2F})(\vec{v}_F\SCAL\vec{z}_F)=0$ owing to the single-valuedness of $\vec{w}_F$, $\vec{v}_F$, and $\vec{z}_F$ and the fact that $\normal{T_1F}+\normal_{T_2F}=\vec{0}$ by definition.
\end{proof}


\section{The Hybrid High-Order method}\label{sec:method}

In this section we discuss the discretisation of the viscous, pressure-velocity coupling, and convective terms. We then formulate the discrete problem.

\subsection{Viscous term}\label{sec:diff}

To discretise the viscous term, we introduce the bilinear form $\mathrm{a}_h:\Uh\times\Uh\to\Real$ such that
\begin{equation}\label{eq:ah}
	\begin{aligned}
  \mathrm{a}_h(\uvec{w}_h,\uvec{v}_h)
  \coloneq{}&\sum_{T\in\Th}\mathrm{a}_T(\uvec{w}_T,\uvec{v}_T)
  \\
  &+ \sum_{F\in\Fhb}  
  \int_F\left(
  -\GRAD\rT[T_F]\uvec{w}_{T_F}\normal_F\SCAL\vec{v}_F
  +\vec{w}_F\SCAL\GRAD\rT[T_F]\uvec{v}_{T_F}\normal_F
  + h_F^{-1}\vec{w}_F\SCAL\vec{v}_F
  \right),
	\end{aligned}
\end{equation}
where the terms in the second line account for the weakly enforced boundary conditions \emph{\`a la} Nitsche, while the element contribution $\mathrm{a}_T:\UT\times\UT\to\Real$ is such that
\begin{equation}\label{eq:aT}
  \mathrm{a}_T(\uvec{w}_T,\uvec{v}_T)
  \coloneq(\GRAD\rT\uvec{w}_T,\GRAD\rT\uvec{v}_T)_T + \mathrm{s}_T(\uvec{w}_T,\uvec{v}_T).
\end{equation}
In the right-hand side of \eqref{eq:aT}, the first term is the usual Galerkin contribution responsible for consistency, while $\mathrm{s}_T:\UT\times\UT\to\Real$  is the following stabilisation bilinear form that penalises the difference between the interpolate of the velocity reconstruction and the discrete unknowns:
\begin{equation}\label{eq:def.sT}
\mathrm{s}_T(\uvec{w}_T,\uvec{v}_T)\coloneq
\sum_{F\in\Fh[T]}
\frac{1}{h_F}\int_F(\vec{\delta}_{TF}^k\uvec{w}_T-\vec{\delta}_T^k\uvec{w}_T)\SCAL
(\vec{\delta}_{TF}^k\uvec{v}_T-\vec{\delta}_T^k\uvec{v}_T),
\end{equation}
where, for all $\uvec{v}_T\in\UT$, the difference operators are such that, recalling the definition \eqref{eq:IT} of $\IT$,
\begin{equation*}
  (\vec{\delta}_T^k\uvec{v}_T,(\vec{\delta}_{TF}^k\uvec{v}_T)_{F\in\Fh[T]})
  \coloneq\IT\rT\uvec{v}_T-\uvec{v}_T
  =\Big(\vlproj[T]{k}\rT\uvec{v}_T-\vec{v}_T,(\vlproj[F]{k}(\rT\uvec{v}_T)_{|F}-\vec{v}_F)_{F\in\Fh[T]}\Big).
\end{equation*}
The role of this stabilisation bilinear form is to ensure the following uniform local norm equivalence (see, e.g., \cite[Lemma 4]{Di-Pietro.Ern.ea:14}, where the scalar case is considered):
For all $T\in\Th$ and all $\uvec{v}_T\in\UT$,
\begin{equation}\label{eq:aT:stability}
  \mathrm{a}_T(\uvec{v}_T,\uvec{v}_T)
  \esim[C_{\mathrm{a}}^{-1}]\norm[1,T]{\uvec{v}_T}^2,
\end{equation}
with $\norm[1,T]{{\cdot}}$ defined by \eqref{eq:norm.1T}.
It holds (see, e.g., \cite[Proposition 3.1]{Di-Pietro.Tittarelli:18}):
For all $T\in\Th$ and all $\vec{v}\in H^{k+2}(T)^d$,
\begin{equation}\label{eq:sT:consistency}
  \mathrm{s}_T(\IT\vec{v},\IT\vec{v})^{\frac12}\lsim h_T^{k+1}\seminorm[H^{k+2}(T)^d]{\vec{v}}.
\end{equation}
The properties of the bilinear forms $\mathrm{a}_h$ relevant for the analysis are summarised in the following lemma.
\begin{lemma}[Properties of $\mathrm{a}_h$]\label{lem:ah}
  The bilinear form $\mathrm{a}_h$ has the following properties:
  \begin{enumerate}[(i)]
  \item \emph{Stability and boundedness.} It holds with $\norm[1,h]{{\cdot}}$ defined by \eqref{eq:norm.1h}:
    For all $\uvec{v}_h\in\Uh$,
    \begin{equation}\label{eq:ah:stability}
      \mathrm{a}(\uvec{v}_h,\uvec{v}_h)
      \esim[C_{\mathrm{a}}^{-1}]\norm[1,h]{\uvec{v}_h}^2.
    \end{equation}
  \item \emph{Consistency.} It holds:
    For all $\vec{w}\in H_0^1(\Omega)^d\cap H^{k+2}(\Th)^d$ such that $\LAPL\vec{w}\in L^2(\Omega)^d$,
    \begin{equation}\label{eq:ah:consistency}
      \sup_{\uvec{v}_h\in\Uh,\norm[1,h]{\uvec{v}_h}=1}\left|
      \int_\Omega\LAPL\vec{w}\SCAL\vec{v}_h + \mathrm{a}_h(\Ih\vec{w},\uvec{v}_h)
      \right|\lsim h^{k+1}\seminorm[H^{k+2}(\Th)^d]{\vec{w}}.
    \end{equation}
  \end{enumerate}
\end{lemma}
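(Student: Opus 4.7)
The plan is to treat the two statements separately, reducing each to the building blocks \eqref{eq:aT:stability}, \eqref{eq:sT:consistency}, \eqref{eq:characterisation.rT}, and \eqref{eq:approx.GT}, and handling the new boundary Nitsche contributions with discrete trace and Young inequalities in the standard way for weak boundary enforcement.

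For part (i), boundedness follows from element-by-element Cauchy--Schwarz: \eqref{eq:aT:stability} controls $\sum_T\mathrm{a}_T(\uvec{w}_T,\uvec{v}_T)$, while each Nitsche boundary integral is bounded using a discrete trace inequality $\|\GRAD\rT[T_F]\uvec{w}_{T_F}\|_{L^2(F)^{d\times d}}\lsim h_F^{-1/2}\|\GRAD\rT[T_F]\uvec{w}_{T_F}\|_{L^2(T_F)^{d\times d}}\lsim h_F^{-1/2}\norm[1,T_F]{\uvec{w}_{T_F}}$ paired with the factor $h_F^{-1/2}\|\vec{v}_F\|_{L^2(F)^d}$ that enters $\norm[1,h]{\uvec{v}_h}$ via the boundary term in \eqref{eq:norm.1h}. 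For coercivity, I would symmetrise to write
\[
\mathrm{a}_h(\uvec{v}_h,\uvec{v}_h) = \sum_{T\in\Th}\mathrm{a}_T(\uvec{v}_T,\uvec{v}_T) - 2\sum_{F\in\Fhb}\int_F\GRAD\rT[T_F]\uvec{v}_{T_F}\normal_F\SCAL\vec{v}_F + \sum_{F\in\Fhb}h_F^{-1}\norm[L^2(F)^d]{\vec{v}_F}^2,
\]
and absorb the cross term by Young's inequality with a carefully chosen small parameter, so that a fraction of $\sum_T\mathrm{a}_T(\uvec{v}_T,\uvec{v}_T)$ (the mesh-regularity bound on the number of boundary faces per element preventing multiple counting) together with the penalty sum dominates it.

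For part (ii), I would start by integrating by parts element by element; since $\LAPL\vec{w}\in L^2(\Omega)^d$ together with $\vec{w}\in H^1(\Omega)^d$ makes the normal traces of $\GRAD\vec{w}$ single-valued on internal faces, the face contributions in $\vec{v}_F$ cancel on $\Fhi$ and one is left with
\[
\int_\Omega\LAPL\vec{w}\SCAL\vec{v}_h = -\sum_{T\in\Th}\int_T\GRAD\vec{w}\SSCAL\GRAD\vec{v}_T + \sum_{T\in\Th}\sum_{F\in\Fh[T]}\int_F\GRAD\vec{w}\,\normal_{TF}\SCAL(\vec{v}_T-\vec{v}_F) + \sum_{F\in\Fhb}\int_F\GRAD\vec{w}\,\normal_F\SCAL\vec{v}_F.
\]
In parallel, I rewrite $\mathrm{a}_h(\Ih\vec{w},\uvec{v}_h)$ by applying the characterisation \eqref{eq:characterisation.rT} with the role of the test polynomial played by $\rT\IT\vec{w}\in\Poly{k+1}(T)^d$, so as to replace $(\GRAD\rT\IT\vec{w},\GRAD\rT\uvec{v}_T)_T$ by $(\GRAD\rT\IT\vec{w},\GRAD\vec{v}_T)_T$ plus face corrections. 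Moreover, two of the three Nitsche boundary terms coming from $\Ih\vec{w}$ in \eqref{eq:ah} vanish because $\vec{w}=0$ on $\partial\Omega$ forces $\vlproj[F]{k}\vec{w}=0$ for $F\in\Fhb$. Adding the two expressions, the ``$\GRAD\vec{w}$'' and ``$\GRAD\rT\IT\vec{w}$'' pieces pair up and everything reorganises into volumetric and face projection errors of the form $\GRAD(\rT\IT\vec{w}-\vec{w})$, a boundary Nitsche correction of the same form on $\Fhb$, and the stabilisation contribution $\sum_T\mathrm{s}_T(\IT\vec{w},\uvec{v}_T)$.

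Each residual piece is then controlled by Cauchy--Schwarz. The errors $\|\GRAD\rT\IT\vec{w}-\GRAD\vec{w}\|_{L^2(T)^{d\times d}}$ and $h_F^{1/2}\|\GRAD\rT\IT\vec{w}-\GRAD\vec{w}\|_{L^2(F)^{d\times d}}$ are both bounded by $h_T^{k+1}\seminorm[H^{k+2}(T)^d]{\vec{w}}$, deduced from \eqref{eq:approx.GT} together with the fact that $\GRAD\rT\IT\vec{w}$ is the $L^2$-orthogonal projection of $\GT[k]\IT\vec{w}$ on $\GRAD\Poly{k+1}(T)^d$ and a standard polynomial trace estimate; the companion factors (volume gradients, element-face jumps, and boundary face norms) are all constituents of $\norm[1,h]{\uvec{v}_h}$. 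The stabilisation piece is handled by \eqref{eq:sT:consistency} together with $\mathrm{s}_T(\uvec{v}_T,\uvec{v}_T)^{1/2}\le\mathrm{a}_T(\uvec{v}_T,\uvec{v}_T)^{1/2}\lsim\norm[1,T]{\uvec{v}_T}$. I expect the main obstacle to be the Young-inequality bookkeeping in the coercivity proof of (i): since the penalty coefficient in \eqref{eq:ah} is simply $h_F^{-1}$ (with no explicit free parameter), the absorption of the Nitsche cross term has to be carried out carefully using the mesh-regularity bound on the number of boundary faces per element and the discrete-trace constant. A secondary technical point in (ii) is the boundary Nitsche consistency contribution $\int_F\GRAD(\rT\IT\vec{w}-\vec{w})\normal_F\SCAL\vec{v}_F$: here no jump $\vec{v}_T-\vec{v}_F$ is available, but the factor $h_F^{-1/2}\|\vec{v}_F\|_{L^2(F)^d}$ is directly part of $\norm[1,h]{\uvec{v}_h}$, which is precisely why \eqref{eq:norm.1h} includes that boundary contribution.
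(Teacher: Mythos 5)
Part (ii) of your proposal follows the paper's proof essentially step for step (element-by-element integration by parts, expansion of $\mathrm{a}_h(\Ih\vec{w},\uvec{v}_h)$ via \eqref{eq:characterisation.rT}, vanishing of the boundary traces of $\vec{w}$, pairing into errors of the form $\GRAD(\rT\IT\vec{w}-\vec{w})$, and the stabilisation bound \eqref{eq:sT:consistency}); that part is sound.

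Part (i), however, contains a genuine error. The two Nitsche terms in \eqref{eq:ah} carry \emph{opposite} signs — this is the skew-symmetric (non-symmetric) variant, cf.\ Remark \ref{rem:weak.bc.variations} — so on the diagonal they cancel exactly:
\[
\int_F\left(-\GRAD\rT[T_F]\uvec{v}_{T_F}\normal_F\SCAL\vec{v}_F+\vec{v}_F\SCAL\GRAD\rT[T_F]\uvec{v}_{T_F}\normal_F\right)=0 ,
\]
and hence $\mathrm{a}_h(\uvec{v}_h,\uvec{v}_h)=\sum_{T\in\Th}\mathrm{a}_T(\uvec{v}_T,\uvec{v}_T)+\sum_{F\in\Fhb}h_F^{-1}\norm[L^2(F)^d]{\vec{v}_F}^2$, from which \eqref{eq:ah:stability} is immediate by \eqref{eq:aT:stability} and the definition \eqref{eq:norm.1h}. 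Your symmetrisation producing the cross term $-2\sum_{F\in\Fhb}\int_F\GRAD\rT[T_F]\uvec{v}_{T_F}\normal_F\SCAL\vec{v}_F$ corresponds to the \emph{symmetric} Nitsche variant, which is not the one defined in \eqref{eq:ah}. Worse, the Young-inequality absorption you propose would not close for that variant with the unit penalty $h_F^{-1}$: absorbing $2\left|\int_F\GRAD\rT[T_F]\uvec{v}_{T_F}\normal_F\SCAL\vec{v}_F\right|\le\epsilon\, h_F\norm[L^2(F)^{d\times d}]{\GRAD\rT[T_F]\uvec{v}_{T_F}}^2+\epsilon^{-1}h_F^{-1}\norm[L^2(F)^d]{\vec{v}_F}^2$ into the element term forces $\epsilon$ small (depending on the discrete trace constant), and the leftover $\epsilon^{-1}h_F^{-1}\norm[L^2(F)^d]{\vec{v}_F}^2$ then exceeds the available penalty; a two-sided bound would require $\eta$ above a mesh-regularity-dependent threshold, which is precisely the situation \eqref{eq:ah} avoids by the choice of signs. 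The "main obstacle" you flag is therefore an artefact of a sign mistake, and the correct argument has no obstacle at all.
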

\begin{proof}
  See \ref{sec:convergence.analysis:preliminary.results:ah}.
\end{proof}
\begin{remark}[Variations for the Nitsche terms]\label{rem:weak.bc.variations}
  Several variations are possible for the terms responsible of the weak enforcement of boundary conditions in \eqref{eq:ah}.
  Specifically, a symmetric variation is obtained replacing inside the sum over $F\in\Fhb$ the term $\int_F\vec{w}_F\SCAL\GRAD\rT[T_F]\uvec{v}_{T_F}\normal_F$ by $-\int_F\vec{w}_F\SCAL\GRAD\rT[T_F]\uvec{v}_{T_F}\normal_F$.
  This term can also be removed altogether, leading to the so-called incomplete variation.
  One can also add a penalty coefficient $\eta>0$ in front of the penalty term, writing $\int_F\eta h_F^{-1}\vec{w}_F\SCAL\vec{v}_F$.
  The lower threshold for $\eta$ leading to a stable method depends on whether the skew-symmetric, symmetric or incomplete versions are considered.
  For the skew-symmetric version, in particular, stability is obtained provided $\eta>0$, and the expression \eqref{eq:ah} corresponds to $\eta=1$.
  The penalty parameter is present in our implementation, and values larger than 1 have sometimes been required in numerical experiments; see the comments in Section \ref{sec:tests:kovasznay}.
\end{remark}

\subsection{Pressure-velocity coupling}\label{sec:disc.div}

The pressure-velocity coupling hinges on the bilinear form $\mathrm{b}_h$ on $\Uh\times\Poly{k}(\Th)$ such that
\begin{equation}\label{eq:bh}
  \mathrm{b}_h(\uvec{v}_h,q_h)
  \coloneq -\sum_{T\in\Th}\int_T\DT\uvec{v}_T~q_T
  +\sum_{F\in\Fhb}\int_F(\vec{v}_F\SCAL\normal_F) q_{T_F},
\end{equation}
where $q_T\coloneq q_{h|T}$.
The second term in the right-hand side of \eqref{eq:bh} accounts for the weak enforcement of boundary conditions.
The relevant properties of $\mathrm{b}_h$ are summarised in the following lemma.
\begin{lemma}[Properties of $\mathrm{b}_h$]\label{lem:bh}
  The bilinear form $\mathrm{b}_h$ has the following properties:
  \begin{enumerate}[(i)]
  \item \emph{Consistency/1.} It holds, for all $\vec{v}\in H^1_0(\Omega)^d$,
    \begin{equation}\label{eq:bh:consistency.2}
      \mathrm{b}_h(\Ih\vec{v},q_h) = b(\vec{v},q_h)\qquad\forall q_h\in\Poly{k}(\Th).
    \end{equation}
  \item \emph{Stability.} It holds:
    For all $q_h\in\Ph\coloneq\Poly{k}(\Th)\cap L^2_0(\Omega)$,
    \begin{equation}\label{eq:bh:stability}
      \norm[L^2(\Omega)]{q_h}\lsim\sup_{\uvec{v}_h\in\Uh,\norm[1,h]{\uvec{v}_h}=1}\mathrm{b}_h(\uvec{v}_h,q_h).
    \end{equation}
  \item \emph{Consistency/2.} It holds:
    For all $q\in H^1(\Omega)\cap H^{k+1}(\Th)$,
    \begin{equation}\label{eq:bh:consistency}
      \sup_{\uvec{v}_h\in\Uh,\norm[1,h]{\uvec{v}_h}=1}\left|
      \int_\Omega\GRAD q\SCAL\vec{v}_h - \mathrm{b}_h(\uvec{v}_h,\lproj{k} q)
      \right|\lsim h^{k+1}\seminorm[H^{k+1}(\Th)]{q}.
    \end{equation}
  \end{enumerate}
\end{lemma}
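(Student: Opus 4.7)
\medskip
\noindent\textbf{Proof plan for Lemma \ref{lem:bh}.}

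For property (i), the plan is to invoke the commutation relation \eqref{eq:DT:commuting}, which gives $\DT\IT\vec{v}=\lproj[T]{k}(\DIV\vec{v})$. Since $q_T\in\Poly{k}(T)$, the $L^2$-projector disappears inside the integral $\int_T\DT\IT\vec{v}\,q_T=\int_T(\DIV\vec{v})q_T$. Finally, because $\vec{v}\in H_0^1(\Omega)^d$ one has $\vec{v}_{|F}=\vec{0}$ for every $F\in\Fhb$, so $\vlproj[F]{k}\vec{v}_{|F}=\vec{0}$ and the boundary contribution in \eqref{eq:bh} cancels, leaving exactly $b(\vec{v},q_h)$.

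For property (ii) I will use a standard Fortin argument based on the continuous inf-sup condition. Given $q_h\in\Ph$, the surjectivity of $\DIV:H_0^1(\Omega)^d\to L_0^2(\Omega)$ yields $\vec{v}\in H_0^1(\Omega)^d$ with $\DIV\vec{v}=-q_h$ and $\norm[H^1(\Omega)^d]{\vec{v}}\lsim\norm[L^2(\Omega)]{q_h}$. Taking $\uvec{v}_h=\Ih\vec{v}$ and applying (i), $\mathrm{b}_h(\Ih\vec{v},q_h)=b(\vec{v},q_h)=\norm[L^2(\Omega)]{q_h}^2$. I then invoke the $H^1$-boundedness of the interpolator, $\norm[1,h]{\Ih\vec{v}}\lsim\norm[H^1(\Omega)^d]{\vec{v}}$ (a standard HHO property, easily checked from \eqref{eq:norm.1h}--\eqref{eq:norm.1T} together with the trace approximation \eqref{eq:lproj.approx:FT}; here the boundary term of $\norm[1,h]{{\cdot}}$ is controlled since $\vlproj[F]{k}\vec{v}_{|F}=\vec{0}$ for $F\in\Fhb$). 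Dividing the equality by $\norm[1,h]{\Ih\vec{v}}$ yields \eqref{eq:bh:stability}.

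For property (iii), the strategy is an algebraic rewriting of $\mathrm{b}_h(\uvec{v}_h,\lproj{k}q)$ in terms of $\int_\Omega\GRAD q\SCAL\vec{v}_h$ plus remainders amenable to projection-error bounds. Using characterisation \eqref{eq:DT} with $q\rightsquigarrow\lproj[T]{k}q$ and then integrating by parts $\int_T(\DIV\vec{v}_T)\lproj[T]{k}q=\int_T(\DIV\vec{v}_T)q$ (valid since $\DIV\vec{v}_T\in\Poly{k-1}(T)$), and rearranging the face terms (internal face contributions of $\vec{v}_F$ cancel by single-valuedness and opposite normals, while those of $\vec{v}_T$ can be grouped by introducing $\pm\vec{v}_F$), I expect to arrive at
\begin{equation*}
\int_\Omega\GRAD q\SCAL\vec{v}_h-\mathrm{b}_h(\uvec{v}_h,\lproj{k}q)
=-\sum_{T\in\Th}\sum_{F\in\Fh[T]}\int_F(\vec{v}_F-\vec{v}_T)\SCAL\normal_{TF}\,(q-\lproj[T]{k}q)
+\sum_{F\in\Fhb}\int_F(\vec{v}_F\SCAL\normal_F)(q-\lproj[T_F]{k}q).
\end{equation*}
Each term on the right is then estimated by Cauchy--Schwarz after inserting the scaling factor $h_F^{\pm 1/2}$: the factors $h_F^{-1/2}\norm[L^2(F)^d]{\vec{v}_F-\vec{v}_T}$ and $h_F^{-1/2}\norm[L^2(F)^d]{\vec{v}_F}$ sum (over $F$ and $T$) to at most $\norm[1,h]{\uvec{v}_h}$ by definitions \eqref{eq:norm.1T}--\eqref{eq:norm.1h}, while the factors $h_F^{1/2}\norm[L^2(F)]{q-\lproj[T]{k}q}$ are bounded by $h_T^{k+1}\seminorm[H^{k+1}(T)]{q}$ thanks to \eqref{eq:lproj.approx:FT} (with $p=2$, $s=k+1$, $m=0$) and mesh regularity ($h_F\esim h_T$). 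A final Cauchy--Schwarz on the sum over elements/boundary faces yields the bound $h^{k+1}\seminorm[H^{k+1}(\Th)]{q}\norm[1,h]{\uvec{v}_h}$, whence \eqref{eq:bh:consistency}.

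I expect the algebraic rewriting in (iii)---in particular the careful bookkeeping of internal vs.\ boundary face contributions and the repeated use of $\normal_{T_1F}+\normal_{T_2F}=\vec{0}$ on internal faces---to be the only non-routine step; the final estimate is then a standard application of the projection approximation bounds already listed in \eqref{eq:lproj.approx}.
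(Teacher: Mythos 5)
Your proposal is correct and follows essentially the same route as the paper for all three parts: the commuting property \eqref{eq:DT:commuting} plus vanishing boundary unknowns for (i), the Fortin-type argument via the surjectivity of the divergence and the interpolator bound \eqref{est:CI} for (ii), and the element-by-element rewriting leading to the same difference formula, estimated with \eqref{eq:lproj.approx:FT} and Cauchy--Schwarz, for (iii). The only quibble is a wording slip in (iii): removing the projector from $\int_T(\DIV\vec{v}_T)\lproj[T]{k}q$ is the orthogonality \eqref{eq:lproj} (since $\DIV\vec{v}_T\in\Poly{k-1}(T)\subset\Poly{k}(T)$), not an integration by parts.
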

\begin{proof}
  See \ref{sec:convergence.analysis:preliminary.results:bh}.
\end{proof}

\subsection{Convective term}\label{sec:disc.prob:adv.rea}

Let $\uvec{w}_h\in\Uh$.
To discretise the convective term, inspired by \eqref{eq:tilde.t}, we introduce the global trilinear form $\mathrm{t}_h$ on $\Uh\times\Uh\times\Uh$ such that
\begin{equation}\label{eq:th}
  \mathrm{t}_h(\uvec{w}_h,\uvec{v}_h,\uvec{z}_h)
  \coloneq\sum_{T\in\Th}\mathrm{t}_T(\uvec{w}_T,\uvec{v}_T,\uvec{z}_T)
  -\frac12\sum_{F\in\Fhb}\int_F(\vec{w}_F\SCAL\normal_F)(\vec{v}_F\SCAL\vec{z}_F),
\end{equation}
where the second term in the right-hand side accounts for weakly enforced boundary conditions while, for all $T\in\Th$, the local trilinear form $\mathrm{t}_T$ on $\UT\times\UT\times\UT$ is such that
\begin{multline}\label{eq:tT}
  \mathrm{t}_T(\uvec{w}_T,\uvec{v}_T,\uvec{z}_T)\coloneq
  \int_T\GwT{\uvec{v}_T}\SCAL\vec{z}_T
  + \frac{1}{2}\int_T\DT[2k]\uvec{w}_T(\vec{v}_T\SCAL\vec{z}_T)
  \\
  +\frac{1}{2}\sum_{F\in\Fh[T]}\int_F(\vec{w}_F\SCAL\normal_{TF})(\vec{v}_F-\vec{v}_T)\SCAL(\vec{z}_F-\vec{z}_T).
\end{multline}
The second and third terms embody Temam's device \cite{Temam:79}, and are crucial to obtain skew-symmetry and non-dissipation properties for $\mathrm{t}_h$ detailed in the following lemma.
\begin{lemma}[Properties of $\mathrm{t}_h$]\label{lem:th}
  The trilinear form $\mathrm{t}_h$ has the following properties:
  \begin{enumerate}[(i)]
  \item \emph{Skew-symmetry and non-dissipation.} For all $\uvec{w}_h,\uvec{v}_h,\uvec{z}_h\in\Uh$, it holds that
    \begin{equation}\label{eq:th:skew-symmetry}
      \mathrm{t}_h(\uvec{w}_h,\uvec{v}_h,\uvec{z}_h)
      =\frac12\sum_{T\in\Th}
      \int_T\left(
      \GwT{\uvec{v}_T}\SCAL\vec{z}_T - \vec{v}_T\SCAL\GwT{\uvec{z}_T}
      \right),
    \end{equation}
    so that, in particular, for all $\uvec{w}_h,\uvec{v}_h\in\Uh$,
    \begin{equation}\label{eq:th:non-dissipation}
      \mathrm{t}_h(\uvec{w}_h,\uvec{v}_h,\uvec{v}_h)=0.
    \end{equation}
  \item \emph{Boundedness.} It holds:
    For all $\uvec{w}_h,\uvec{v}_h,\uvec{z}_h\in\Uh$,
    \begin{equation}\label{eq:th:boundedness}
      \left| \mathrm{t}_h(\uvec{w}_h,\uvec{v}_h,\uvec{z}_h)\right|
      \lsim[C_{\mathrm{t}}]\norm[1,h]{\uvec{w}_h}\norm[1,h]{\uvec{v}_h}\norm[1,h]{\uvec{z}_h}.
    \end{equation}
  \item \emph{Consistency.} It holds:
    For all $\vec{w}\in H_0^1(\Omega)^d\cap W^{k+1,4}(\Th)^d$,
    \begin{multline}\label{eq:th:consistency}
      \sup_{\uvec{z}_h\in\Uh,\norm[1,h]{\uvec{z}_h}=1}\left|
      \int_\Omega\left(
      (\vec{w}\SCAL\GRAD)\vec{w}\SCAL\vec{z}_h
      +\frac12(\DIV\vec{w})\vec{w}\SCAL\vec{z}_h
      \right)
      - \mathrm{t}_h(\Ih\vec{w},\Ih\vec{w},\uvec{z}_h)
      \right|
      \\
      \lsim h^{k+1}
      \norm[W^{1,4}(\Omega)^d]{\vec{w}}
      \seminorm[W^{k+1,4}(\Th)^d]{\vec{w}}.
    \end{multline}
  \end{enumerate}
\end{lemma}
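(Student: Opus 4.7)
\textbf{Proof plan for Lemma \ref{lem:th}.}

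For part (i), the engine is the discrete integration by parts formula \eqref{eq:ibp}. Applying it to the term $\tfrac12\sum_{T\in\Th}\int_T \DT[2k]\uvec{w}_T(\vec{v}_T\SCAL\vec{z}_T)$ appearing in the definition of $\mathrm{t}_h$ produces, with a factor $\tfrac12$, the sum of $-\GwT{\uvec{v}_T}\SCAL\vec{z}_T-\vec{v}_T\SCAL\GwT{\uvec{z}_T}$ on elements, a face-correction sum and a boundary term. Plugged back into \eqref{eq:th}--\eqref{eq:tT}, the face correction coming from \eqref{eq:ibp} cancels exactly the face-correction term already present in \eqref{eq:tT} (which carries an opposite sign by construction), while the boundary contribution produced by \eqref{eq:ibp} cancels the Nitsche boundary term of \eqref{eq:th}. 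What remains collapses to \eqref{eq:th:skew-symmetry}. The non-dissipation identity \eqref{eq:th:non-dissipation} is then obtained by specialising $\uvec{z}_h=\uvec{v}_h$: after swapping $\vec{v}_T$ and $\vec{z}_T$ in the second summand and relabelling, the right-hand side vanishes. This clean cancellation is in fact the precise design rationale behind the definition \eqref{eq:tT}.

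For part (ii), the estimate is obtained piece by piece from the four contributions to \eqref{eq:th}--\eqref{eq:tT}. Using the defining relation \eqref{eq:GwT} and H\"older's inequality with exponents $(4,2,4)$, the element term $\int_T\GwT{\uvec{v}_T}\SCAL\vec{z}_T$ is bounded by $\|\vec{w}_T\|_{L^4(T)^d}\|\GRAD\vec{v}_T\|_{L^2(T)^{d\times d}}\|\vec{z}_T\|_{L^4(T)^d}$ plus face contributions, handled by discrete trace inequalities. For the divergence term, the bound $\|\DT[2k]\uvec{w}_T\|_{L^2(T)}\lsim\norm[1,T]{\uvec{w}_T}$ (a direct consequence of \eqref{eq:DT} combined with discrete trace inequalities) followed by an $L^4$--$L^4$ split of $\vec{v}_T\SCAL\vec{z}_T$ does the job; the two remaining face/boundary terms are treated similarly by trace inequalities. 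Summing over $T$, using Cauchy--Schwarz in $T$ and invoking the discrete Sobolev embedding \eqref{eq:sobolev.embeddings} with $q=4$ (admissible both in $d=2$ and $d=3$) to pass from $L^4$ norms of $\vec{w}_h$ and $\vec{z}_h$ to the $\norm[1,h]{{\cdot}}$ norm yields \eqref{eq:th:boundedness}.

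For part (iii), I would compare $\mathrm{t}_h(\Ih\vec{w},\Ih\vec{w},\uvec{z}_h)$ piece-by-piece with the continuous quantity $\int_\Omega\bigl[(\vec{w}\SCAL\GRAD)\vec{w}+\tfrac12(\DIV\vec{w})\vec{w}\bigr]\SCAL\vec{z}_h$. The element directional-derivative contribution is absorbed directly by Proposition \ref{prop:GwT}, which already delivers the target rate $h^{k+1}$ with precisely the $W^{1,4}$ and $H^{k+1}$ norms of $\vec{w}$. For the divergence contribution, inserting the intermediate quantity $\int_T(\DIV\vec{w})(\vlproj[T]{k}\vec{w}\SCAL\vec{z}_T)$ and bounding the two resulting differences by the $L^2$ approximation \eqref{eq:approx.DT} of $\DT[2k]$ and the $L^4$ approximation \eqref{eq:lproj.approx:T} of $\vlproj[T]{k}$ (again via H\"older with exponents $(2,4,4)$) yields the expected order. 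The crucial simplification is that $\vec{w}\in H_0^1(\Omega)^d$ forces $\vlproj[F]{k}\vec{w}_{|F}=\vec{0}$ for every $F\in\Fhb$, so the Nitsche boundary term in \eqref{eq:th} and the boundary part of the face-correction sum in \eqref{eq:tT} both vanish identically; only internal-face corrections survive and are controlled by \eqref{eq:lproj.approx:FT} applied to $\vec{w}-\vlproj[T]{k}\vec{w}$ and $\vec{w}-\vlproj[F]{k}\vec{w}$. The main obstacle in this part is precisely the bookkeeping of H\"older exponents across the cubic structure: one must consistently split norms so that $\vec{w}$ carries the $W^{k+1,4}\cap W^{1,4}$ regularity and $\vec{z}_h$ is always measured in an $L^4$-like norm that can be absorbed via \eqref{eq:sobolev.embeddings} into $\norm[1,h]{\uvec{z}_h}=1$.
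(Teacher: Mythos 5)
Your part (i) is exactly the paper's argument: applying the discrete integration by parts formula \eqref{eq:ibp} to the Temam term cancels both the face-correction sum of \eqref{eq:tT} and the boundary term of \eqref{eq:th}, leaving the skew-symmetric expression \eqref{eq:th:skew-symmetry}, from which \eqref{eq:th:non-dissipation} is immediate. For part (ii) you take a genuinely different (and viable) route: you estimate the four contributions of \eqref{eq:th}--\eqref{eq:tT} directly, whereas the paper first invokes \eqref{eq:th:skew-symmetry} to reduce the whole bound to a single estimate on $\sum_{T\in\Th}\int_T\GwT{\uvec{v}_T}\SCAL\vec{z}_T$, applied twice with $\uvec{v}_h$ and $\uvec{z}_h$ exchanged; this buys a shorter proof with only two terms to control. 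Your route requires one tool you do not name: for the contributions carrying $\vec{w}_F$ on faces (the face correction in \eqref{eq:tT} and the Nitsche term in \eqref{eq:th}), discrete trace inequalities alone are not enough, and you need the reverse Lebesgue embedding \eqref{ineq:reverse_leb} to convert $L^4(F)$ or $L^\infty(F)$ norms of face polynomials into the $L^2(F)$ differences that $\norm[1,h]{{\cdot}}$ controls, exactly as the paper does for its term $\term_2$ and for the continuity of $\mathrm{j}_h$.

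The genuine gap is in part (iii), in the divergence contribution. After inserting $\int_T(\DIV\vec{w})(\vlproj[T]{k}\vec{w}\SCAL\vec{z}_T)$, the second difference is $\int_T(\DIV\vec{w}-\DT[2k]\IT\vec{w})(\vlproj[T]{k}\vec{w}\SCAL\vec{z}_T)$, and bounding it through the $L^2$ estimate \eqref{eq:approx.DT} alone yields only $\norm[L^2(T)]{\DIV\vec{w}-\DT[2k]\IT\vec{w}}\lsim h_T^{k}\seminorm[H^{k+1}(T)^d]{\vec{w}}$ (for $\ell=2k>k$ the exponent in \eqref{eq:approx.DT} is $m-1=k$), so this piece comes out one power of $h$ short of the target $h^{k+1}$. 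The missing ingredient is the orthogonality property \eqref{eq:GT.orth}: since $2k\ge k$, the function $\DIV\vec{w}-\DT[2k]\IT\vec{w}$ is $L^2(T)$-orthogonal to $\Poly{k}(T)$, and $\vlproj[T]{0}\vec{w}\SCAL\vec{z}_T\in\Poly{k}(T)$, so one may replace $\vlproj[T]{k}\vec{w}\SCAL\vec{z}_T$ by $(\vlproj[T]{k}\vec{w}-\vlproj[T]{0}\vec{w})\SCAL\vec{z}_T$ at no cost; the factor $\norm[L^4(T)^d]{\vlproj[T]{k}\vec{w}-\vlproj[T]{0}\vec{w}}\lsim h_T\seminorm[W^{1,4}(T)^d]{\vec{w}}$ then supplies the missing power of $h$. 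This is precisely how the paper handles its term $\term_{2,1}$. The remainder of your part (iii) — Proposition \ref{prop:GwT} for the directional derivative, the vanishing of the boundary terms because $\vec{w}\in H_0^1(\Omega)^d$, and the face-correction estimate via \eqref{eq:lproj.approx:FT} — matches the paper's proof.
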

\begin{proof}
  See \ref{sec:convergence.analysis:preliminary.results:th}.
\end{proof}
\begin{remark}[Reformulation of $\mathrm{t}_h$]
  Expanding the discrete directional derivatives appearing in \eqref{eq:th:skew-symmetry} according to their definition \eqref{eq:GwT}, we arrive at the following reformulation of $\mathrm{t}_h$, which shows that, in the computer implementation, one does not need to actually compute $\GwT[\cdot\,]{\cdot}$ nor $\DT[2k]$:
  \begin{multline*}
  \mathrm{t}_h(\uvec{w}_h,\uvec{v}_h,\uvec{z}_h)\\
  =\frac12\sum_{T\in\Th}\left[
    \int_T(\vec{w}_T\SCAL\GRAD)\vec{v}_T\SCAL\vec{z}_T
    -\hspace{-0.5ex} \int_T\vec{v}_T\SCAL(\vec{w}_T\SCAL\GRAD)\vec{z}_T
    +\hspace{-1ex} \sum_{F\in\Fh[T]}\int_F(\vec{w}_F\SCAL\normal_{TF})\left(\vec{v}_F\SCAL\vec{z}_T - \vec{z}_F\SCAL\vec{v}_T\right)
    \right].
  \end{multline*}
\end{remark}

\subsection{Convective stabilisation}\label{sec:conv.stab}

When dealing with high-Reynolds flows, it is sometimes necessary to strengthen stability by penalising the difference between face and element unknowns.
Fix $\rho:\Real\rightarrow [0,\infty)$ a Lipschitz-continuous function and, for $T\in\Th$ and a given $\uvec{w}_T\in\UT$, define the local stabilisation bilinear form $\mathrm{j}_T(\uvec{w}_T;\cdot,\cdot):\UT\times\UT\to\Real$ by
\begin{equation}
  \label{eq:def.jT}
  \mathrm{j}_T(\uvec{w}_T;\uvec{v}_T,\uvec{z}_T)\coloneq
  \sum_{F\in\Fh[T]}\int_F\frac{\nu}{h_F}\rho(\Pe(\vec{w}_F))(\vec{v}_F-\vec{v}_T)\SCAL(\vec{z}_F-\vec{z}_T).
\end{equation}
Here, the local (oriented) Reynolds number $\Pe:\Poly{k}(F)^d\to\Real$ is such that, for all $\vec{w}\in\Poly{k}(F)^d$,
\begin{equation*}
  \Pe(\vec{w})\coloneq h_F \frac{\vec{w}\SCAL\normal_{TF}}{\nu}.
\end{equation*}
On boundary faces $F\in\Fhb$, we simply write $\Pe[F](\vec{w})$ instead of $\Pe[T_FF](\vec{w})$.
As already pointed out in \cite{Beirao-da-Veiga.Droniou.ea:10,Chainais-Hillairet.Droniou:11,Di-Pietro.Droniou.ea:15},
using the generic function $\rho$ in the definition of the convective stabilisation terms enables a unified treatment of several classical discretisations (in the notations of \cite{Beirao-da-Veiga.Droniou.ea:10}, $A(s)=\rho(s)+\frac12 s$ and $B(s)=-\rho(s)+\frac12 s$; in the notations of \cite{Di-Pietro.Droniou.ea:15}, $\rho=\frac12|A|$).
Specifically, the HHO version of classical convective stabilisations is obtained with the following choices of $\rho$:
\begin{itemize}
\item \emph{Centered scheme}: $\rho=0$.
\item \emph{Upwind scheme}: $\rho(s)=\frac{1}{2}|s|$. 
  In this case, the definition of $\mathrm{j}_T(\uvec{w}_T;\cdot,\cdot)$ simplifies to
  \begin{equation*}
    \label{eq:sT.vel}
    \mathrm{j}_T(\uvec{w}_T;\uvec{v}_T,\uvec{v}_T)\coloneq
    \sum_{F\in\Fh[T]}\int_F\frac{|\vec{w}_F\SCAL\normal_{TF}|}{2}(\vec{u}_F-\vec{u}_T)\SCAL(\vec{z}_F-\vec{z}_T).
  \end{equation*}
\item \emph{Locally upwinded $\theta$-scheme}: $\rho(s)=\frac{1}{2}(1-\theta(s))|s|$, where $\theta\in C^1_c(-1,1)$, $0\le \theta\le 1$ and
  $\theta\equiv 1$ on $[-\frac12,\frac12]$. This choice in  \eqref{eq:def.jT} corresponds to the centered scheme if $|\Pe(\vec{w}_F)|\le \frac12$ (dominating diffusion) and to the upwind scheme if $|\Pe(\vec{w}_F)|\ge 1$ (dominating advection).
\item \emph{Scharfetter--Gummel scheme}: $\rho(s)=\frac{s}2\coth(\frac{s}2)-1$.
\end{itemize}
The advantage of the locally upwinded $\theta$-scheme and the Scharfetter--Gummel scheme over the upwind scheme is that they behave as the centered scheme, and thus introduce less numerical diffusion, when $\Pe(\vec{w}_F)$ is close to zero (dominating viscosity). See, e.g., the discussion in \cite[Section 4.1]{Droniou:10} for the Scharfetter--Gummel scheme.

For $\uvec{w}_h\in\Uh$, the global stabilisation bilinear form $\mathrm{j}_h(\uvec{w}_h;\cdot,\cdot):\Uh\times \Uh\to \Real$ is obtained by assembling the local contributions and by adding a penalisation at the boundary to account for weakly enforced boundary conditions:
\begin{equation}\label{eq:jh}
  \mathrm{j}_h(\uvec{w}_h;\uvec{v}_h,\uvec{z}_h)\coloneq
  \sum_{T\in\Th}\mathrm{j}_T(\uvec{w}_T;\uvec{v}_T,\uvec{z}_T) + \sum_{F\in\Fhb} \int_F\frac{\nu}{h_F}\rho(\Pe[F](\vec{w}_F))\vec{v}_F\SCAL \vec{z}_F.
\end{equation}

\begin{lemma}[Properties of $\mathrm{j}_h$]\label{lem:prop.jT}
The stabilisation term $\mathrm{j}_h$ has the following properties:
\begin{enumerate}[(i)]
\item \emph{Continuity.} It holds, for all $\uvec{v}_h,\uvec{w}_h,\uvec{z}_h,\uvec{z}'_h\in\Uh$,
\begin{equation}\label{est:jh.cont}
\left|\mathrm{j}_h(\uvec{v}_h;\uvec{z}_h,\uvec{z}'_h)-\mathrm{j}_h(\uvec{w}_h;\uvec{z}_h,\uvec{z}'_h)\right|\lsim[C_{\mathrm{j}}]h^{1-\frac{d}{4}}\norm[1,h]{\uvec{v}_h-\uvec{w}_h}\norm[1,h]{\uvec{z}_h}\norm[1,h]{\uvec{z}'_h}.
\end{equation}

\item \emph{Consistency.} It holds:
  For all $\vec{w}\in W^{1,4}(\Omega)^d\cap W^{k+1,4}(\Th)^d$ and all $\uvec{z}_h\in\Uh$,
\begin{equation}\label{est:jh.const}
\sup_{\uvec{z}_h\in\Uh,\norm[1,h]{\uvec{z}_h}=1}|\mathrm{j}_h(\Ih \vec{w};\Ih\vec{w},\uvec{z}_h)|\lsim
h^{k+1}\norm[W^{1,4}(\Omega)^d]{\vec{w}}\seminorm[W^{k+1,4}(\Th)^d]{\vec{w}}.
\end{equation}
\end{enumerate}
\end{lemma}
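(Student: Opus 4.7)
The plan is to bound each face integral appearing in $\mathrm{j}_h$ by a product of polynomial factors on $F$, controlled via H\"older's inequality with exponents $(4,4,2)$ together with standard discrete inverse and trace estimates, and then to assemble globally using the discrete Sobolev embedding of Proposition~\ref{prop:sobolev.embeddings} (applied with $q=4$, which is admissible in both $d=2$ and $d=3$) combined with the $\ell^4\le\ell^2$ monotonicity for sequences. The recurring structural feature is the cancellation of the $\nu/h_F$ prefactor against $\Pe(\vec{w}_F)=h_F\vec{w}_F\SCAL\normal_{TF}/\nu$: through Lipschitz continuity of $\rho$ in part~(i), and through the pointwise bound $|\rho(s)|\lesssim|s|$ in part~(ii); the latter is equivalent to $\rho(0)=0$, which is satisfied by every scheme listed above. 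This is what ultimately makes the right-hand sides $\nu$-free.

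For continuity~(i), Lipschitz continuity of $\rho$ reduces face-wise $\frac{\nu}{h_F}|\rho(\Pe(\vec{v}_F))-\rho(\Pe(\vec{w}_F))|$ to (a constant times) $|\vec{v}_F-\vec{w}_F|$, so each face contribution is bounded by $\int_F|\vec{v}_F-\vec{w}_F|\,|\vec{z}_F-\vec{z}_T|\,|\vec{z}'_F-\vec{z}'_T|$. I would then split $\vec{v}_F-\vec{w}_F=\bigl[(\vec{v}_F-\vec{v}_T)-(\vec{w}_F-\vec{w}_T)\bigr]+(\vec{v}_T-\vec{w}_T)$ and treat the two parts separately. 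For the first piece, the $(4,4,2)$ H\"older bound followed by the polynomial inverse $\|\cdot\|_{L^4(F)}\lesssim h_F^{-(d-1)/4}\|\cdot\|_{L^2(F)}$ delivers a prefactor $h_F^{(4-d)/2}$ and three factors of the form $h_F^{-1/2}\|\cdot\|_{L^2(F)^d}$ directly compatible with $\norm[1,T]{\SCAL}$; since $(4-d)/2\ge 1-d/4$ for $d\le 4$, this contribution is at worst $h^{1-d/4}$. For the second piece, the polynomial discrete trace $\|p\|_{L^4(F)}\lesssim h_F^{-1/4}\|p\|_{L^4(T)}$ lifts $\vec{v}_T-\vec{w}_T$ back to the element, producing an $h_F^{1-d/4}$ prefactor times $\|\vec{v}_T-\vec{w}_T\|_{L^4(T)^d}$ and two compatible $h_F^{-1/2}\|\cdot\|_{L^2(F)^d}$ factors. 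Summing faces in each element by Cauchy--Schwarz and then elements by discrete H\"older with exponents $(4,4,2)$, Proposition~\ref{prop:sobolev.embeddings} with $q=4$ bounds $\bigl(\sum_T\|\vec{v}_T-\vec{w}_T\|_{L^4(T)^d}^4\bigr)^{1/4}$ by $\norm[1,h]{\uvec{v}_h-\uvec{w}_h}$, while the monotonicity $\ell^4\le\ell^2$ turns $\bigl(\sum_T\norm[1,T]{\uvec{z}_T}^4\bigr)^{1/4}$ into $\norm[1,h]{\uvec{z}_h}$. Boundary faces $F\in\Fhb$ are handled identically, the contributions $h_F^{-1/2}\|\vec{v}_F\|_{L^2(F)^d}$ being already part of $\norm[1,h]{\uvec{v}_h}$.

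For consistency~(ii), the cornerstone is the identity $\vlproj[F]{k}\vec{w}-\vlproj[T]{k}\vec{w}=\vlproj[F]{k}(\vec{w}-\vlproj[T]{k}\vec{w})$ on each $F\in\Fh[T]$, which holds because $(\vlproj[T]{k}\vec{w})_{|F}$ is a polynomial of degree $\le k$ on $F$ and is therefore fixed by $\vlproj[F]{k}$. Combined with $L^p$-stability of $\vlproj[F]{k}$ and \eqref{eq:lproj.approx:FT}, this yields $\|\vlproj[F]{k}\vec{w}-\vlproj[T]{k}\vec{w}\|_{L^4(F)^d}\lesssim h_T^{k+1-1/4}\seminorm[W^{k+1,4}(T)^d]{\vec{w}}$. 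After bounding $\frac{\nu}{h_F}\rho(\Pe(\vlproj[F]{k}\vec{w}))\lesssim|\vlproj[F]{k}\vec{w}|$ pointwise and applying H\"older $(4,4,2)$ on each face, I would use $L^4$-stability of $\vlproj[F]{k}$ together with the continuous trace inequality $\|\vec{w}\|_{L^4(F)^d}\lesssim h_T^{-1/4}\|\vec{w}\|_{L^4(T)^d}+h_T^{3/4}\seminorm[W^{1,4}(T)^d]{\vec{w}}$ to extract an $h_T^{k+1}$ prefactor. A final discrete H\"older $(4,4,2)$ across $\Th$ then delivers the required $h^{k+1}\norm[W^{1,4}(\Omega)^d]{\vec{w}}\seminorm[W^{k+1,4}(\Th)^d]{\vec{w}}\norm[1,h]{\uvec{z}_h}$ bound. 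Boundary terms involving $\vlproj[F]{k}\vec{w}$ alone are treated analogously, using that the exact solution (to which this estimate will be applied) vanishes on $\partial\Omega$.

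The hardest part technically will be the $h_F$ bookkeeping in~(i): one must verify that both halves of the split of $\vec{v}_F-\vec{w}_F$ yield exponents at least $1-d/4$, and arrange the assembly so that the discrete Sobolev embedding is applied to the correct $L^4$-factor. The resulting exponent $h^{1-d/4}$, namely $h^{1/2}$ in 2D and $h^{1/4}$ in 3D, reflects exactly the strength of the $L^4$ discrete Sobolev embedding relative to $\norm[1,h]{\SCAL}$ and is the sharpest that this strategy can deliver.
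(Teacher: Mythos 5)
Your proof is correct and follows essentially the same route as the paper's: Lipschitz continuity of $\rho$ to cancel the $\nu/h_F$ prefactor, the split $\vec{e}_F=(\vec{e}_F-\vec{e}_T)+\vec{e}_T$, reverse Lebesgue and discrete trace inequalities, and the $L^4$ discrete Sobolev embedding, all leading to the same exponent $h^{1-\frac{d}{4}}$, while part (ii) reproduces the estimate of the term $\term_3$ from the consistency proof of $\mathrm{t}_h$ that the paper simply cites. The only cosmetic difference is your use of H\"older with exponents $(4,4,2)$ on each face in part (i), where the paper uses $(\infty,2,2)$ together with the reverse Lebesgue embedding into $L^\infty(F)$.
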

\begin{proof}
  See \ref{sec:convergence.analysis:preliminary.results:jh}.
\end{proof}


\subsection{Discrete problem}\label{sec:discrete.problem}

Recall that we have set $\Ph\coloneq\Poly{k}(\Th)\cap L^2_0(\Omega)$ (see Lemma \ref{lem:bh}).
The HHO discretisation with weakly enforced boundary conditions of problem \eqref{eq:weak} reads:
Find $(\uvec{u}_h,p_h)\in\Uh\times\Ph$ such that
\begin{subequations}\label{eq:discrete}
  \begin{alignat}{2}\label{eq:discrete:momentum}
    \nu\mathrm{a}_h(\uvec{u}_h,\uvec{v}_h)
    + \mathrm{t}_{h}(\uvec{u}_h,\uvec{u}_h,\uvec{v}_h)
    + \mathrm{j}_h(\uvec{u}_h;\uvec{u}_h,\uvec{v}_h)
    + \mathrm{b}_h(\uvec{v}_h,p_h)
    &=\int_\Omega \vec{f}\SCAL\vec{v}_h&\qquad&\forall \uvec{v}_h\in \Uh
    \\ \label{eq:discrete:mass}
    -\mathrm{b}_h(\uvec{u}_h,q_h)&=0&\qquad&\forall q_h\in\Ph.
  \end{alignat}
\end{subequations}
As usual in HHO methods, boundary conditions can also be strongly enforced seeking the velocity approximation in the following subspace of $\Uh$:
\begin{equation}\label{eq:Uhz}
  \Uhz\coloneq\left\{
  \uvec{v}_h=((\vec{v}_T)_{T\in\Th},(\vec{v}_F)_{F\in\Fh})\in\Uh\st
  \vec{v}_F=\vec{0}\quad\forall F\in\Fhb
  \right\}.
\end{equation}
The HHO discretisation with strongly enforced boundary conditions of problem \eqref{eq:weak} then reads:
Find $(\uvec{u}_h,p_h)\in\Uhz\times\Ph$ such that
\begin{subequations}\label{eq:discrete.strong.bc}
  \begin{alignat}{2}\label{eq:discrete.strong.bc:momentum}
    \nu\mathrm{a}_h(\uvec{u}_h,\uvec{v}_h)
    + t_{h}(\uvec{u}_h,\uvec{u}_h,\uvec{v}_h)
    + \mathrm{j}_h(\uvec{u}_h;\uvec{u}_h,\uvec{v}_h)
    + \mathrm{b}_h(\uvec{v}_h,p_h)
    &=\int_\Omega \vec{f}\SCAL\vec{v}_h&\qquad&\forall \uvec{v}_h\in \Uhz
    \\ \label{eq:discrete.strong.bc:mass}
    -\mathrm{b}_h(\uvec{u}_h,q_h)&=0&\qquad&\forall q_h\in\Poly{k}(\Th).
  \end{alignat}
\end{subequations}
Some remarks are of order.
\begin{remark}[Simplifications]
  Since both the discrete velocity $\uvec{u}_h$ and the test function $\uvec{v}_h$ in \eqref{eq:discrete.strong.bc:momentum} are in $\Uhz$, the terms involving sums over $F\in\Fhb$ in the bilinear forms $\mathrm{a}_h$ and $\mathrm{b}_h$ (see, respectively, \eqref{eq:ah} and \eqref{eq:bh}), in the trilinear form $\mathrm{t}_h$ (see \eqref{eq:th}), and in the convective stabilisation term $\mathrm{j}_h$ (see \eqref{eq:jh}) vanish.
\end{remark}
\begin{remark}[Mass equation]\label{rem:mass.eq}
  For all $\uvec{v}_h\in\Uhz$, we have that
  $$
  \mathrm{b}_h(\uvec{v}_h,1)
  = -\sum_{T\in\Th}\int_T\DT\uvec{v}_T
  = -\sum_{T\in\Th}\sum_{F\in\Fh[T]}\int_F\vec{v}_F\SCAL\normal_{TF}
  = 0,
  $$
  where we have used the definition \eqref{eq:bh} of $\mathrm{b}_h$ and the strongly enforced boundary condition in the first equality,
  the relation \eqref{eq:DT} after integrating by parts the first term in the right-hand side in the second equality,
  and the single-valuedness of interface unknowns together with the strongly enforced boundary condition to conclude.
  As a consequence, \eqref{eq:discrete.strong.bc:mass} was written for any $q_h\in\Poly{k}(\Th)$, and not only $q_h\in\Ph$ as in \eqref{eq:discrete:mass}.
  This is a key point to prove the local mass balance \eqref{eq:discrete.strong.bc:mass.balance} below, which requires to take $q_h$ equal to the characteristic function of one element (which, of course, does not have zero average on $\Omega$).
\end{remark}
\begin{remark}[Incompressibility constraint]\label{rem:incompressibility}
  Equation \eqref{eq:discrete.strong.bc:mass} is equivalent to $\DT\uvec{u}_T=0$ for all $T\in\Th$, and expresses the fact that the HHO velocity field solution to \eqref{eq:discrete.strong.bc} is incompressible.
  Notice, however, that the fact that $\DT\uvec{u}_T=0$ for all $T\in\Th$ does not imply, in general, that $\DT[2k]\uvec{u}_T=0$, which justifies the introduction of the second term in the expression \eqref{eq:tT} of the local convective trilinear form.
\end{remark}

\subsection{Convergence analysis}

We investigate here the convergence of the method.
We focus on the version \eqref{eq:discrete} with weakly enforced boundary conditions.
The proofs carry out unchanged to the version \eqref{eq:discrete.strong.bc} with strongly enforced boundary conditions after accounting for the fact that face unknowns on the boundary vanish for vectors of discrete unknowns in $\Uhz$.
We estimate the error defined as the difference between the solution to the HHO scheme and the interpolant of the exact solution,
denoted for short by
\begin{equation}\label{def:hat.u.p}
(\hat{\uvec{u}}_h,\hat p_h)\coloneq (\Ih\vec{u},\lproj{k} p)\in\Uhz\times P_h^k.
\end{equation}
As usual for the Navier--Stokes equations, the error estimate is obtained under a smallness
assumption on the data. To specify this smallness assumption, we denote by $C_P$ a Poincar\'e constant in $H^1_0(\Omega)^d$ and, using e.g.~\cite[Proposition 7.1]{Di-Pietro.Droniou:17}, we take $C_I$ such that, for all $\vec{w}\in H^1_0(\Omega)^d$, 
\begin{equation}\label{est:CI}
	\norm[1,h]{\Ih \vec{w}}\le C_I\norm[H^1_0(\Omega)^d]{\vec{w}}.
\end{equation}

\begin{theorem}[Discrete error estimate for small data]\label{th:error.estimates}
Assume that the forcing term $\vec{f}$ satisfies, for some $\alpha \in (0,1)$,
\begin{equation}\label{f:small}
	\norm[L^2(\Omega)^d]{\vec{f}}\le\alpha\frac{\nu^2 C_{\mathrm{a}}}{\left(C_{\mathrm{t}} +C_{\mathrm{j}} h^{1-\frac{d}{4}}\right)C_IC_P},
\end{equation}
where $C_{\mathrm{a}}$, $C_{\mathrm{t}}$ and $C_{\mathrm{j}}$ are defined in \eqref{eq:ah:stability}, \eqref{eq:th:boundedness} and \eqref{est:jh.cont}, respectively.
Let $(\vec{u},p)\in H^1_0(\Omega)^d\times L^2_0(\Omega)$ be a solution to the Navier--Stokes equations \eqref{eq:weak} and $(\uvec{u},p_h)\in \Uh\times P_h^k$ be a solution to the HHO scheme \eqref{eq:discrete} with weakly enforced boundary conditions. Assume that $\vec{u}\in W^{k+1,4}(\Th)^d\cap H^{k+2}(\Th)^d$ and that $p\in H^1(\Omega)\cap H^{k+1}(\Th)$, and let $(\hat{\uvec{u}}_h,\hat p_h)$ be defined by \eqref{def:hat.u.p}.
Then, it holds with hidden constant independent of $h$, $\nu$ and $\alpha$,
\begin{multline}\label{error:discrete}
\norm[1,h]{\uvec{u}_h-\hat{\uvec{u}}_h}+(\nu+1)^{-1}\norm[L^2(\Omega)]{p_h-\hat{p}_h}\\
\lsim (1-\alpha)^{-1}\nu^{-1} h^{k+1}\left(
\nu \seminorm[H^{k+2}(\Th)^d]{\vec{u}}
+\norm[W^{1,4}(\Omega)^d]{\vec{u}}\seminorm[W^{k+1,4}(\Th)^d]{\vec{u}}
+\seminorm[H^{k+1}(\Th)]{p}\right).
\end{multline}
\end{theorem}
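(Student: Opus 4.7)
The plan is to proceed by a standard small-data energy argument adapted to the non-symmetric nonlinear HHO setting, systematically invoking the properties collected in Lemmas \ref{lem:ah}, \ref{lem:bh}, \ref{lem:th} and \ref{lem:prop.jT}. The starting point is the derivation of an error equation for $(\uvec{e}_h,\epsilon_h)\coloneq (\uvec{u}_h-\hat{\uvec{u}}_h,p_h-\hat{p}_h)$. Taking the continuous momentum equation tested against $\vec{v}_h$ for an arbitrary $\uvec{v}_h\in\Uh$, replacing each continuous term by its discrete counterpart through the consistency identities \eqref{eq:ah:consistency}, \eqref{eq:bh:consistency}, \eqref{eq:th:consistency} (whose $\frac12(\DIV\vec{u})\vec{u}$ contribution is zero by incompressibility) and \eqref{est:jh.const}, then subtracting \eqref{eq:discrete:momentum} yields
$$
\nu\mathrm{a}_h(\uvec{e}_h,\uvec{v}_h)+\bigl[\mathrm{t}_h(\uvec{u}_h,\uvec{u}_h,\uvec{v}_h)-\mathrm{t}_h(\hat{\uvec{u}}_h,\hat{\uvec{u}}_h,\uvec{v}_h)\bigr]+\bigl[\mathrm{j}_h(\uvec{u}_h;\uvec{u}_h,\uvec{v}_h)-\mathrm{j}_h(\hat{\uvec{u}}_h;\hat{\uvec{u}}_h,\uvec{v}_h)\bigr]+\mathrm{b}_h(\uvec{v}_h,\epsilon_h)=\mathcal{E}(\uvec{v}_h),
$$
with residual controlled as $|\mathcal{E}(\uvec{v}_h)|\lsim h^{k+1}\Xi\,\norm[1,h]{\uvec{v}_h}$ where $\Xi\coloneq \nu\seminorm[H^{k+2}(\Th)^d]{\vec{u}}+\norm[W^{1,4}(\Omega)^d]{\vec{u}}\seminorm[W^{k+1,4}(\Th)^d]{\vec{u}}+\seminorm[H^{k+1}(\Th)]{p}$. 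The consistency \eqref{eq:bh:consistency.2} combined with $\DIV\vec{u}=0$ gives $\mathrm{b}_h(\hat{\uvec{u}}_h,q_h)=0$ for every $q_h\in\Poly{k}(\Th)$, which together with \eqref{eq:discrete:mass} yields the orthogonality $\mathrm{b}_h(\uvec{e}_h,q_h)=0$ for all $q_h\in\Ph$.

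For the velocity estimate, I would test the error equation with $\uvec{v}_h=\uvec{e}_h$: the pressure term drops by the orthogonality just mentioned; the trilinear increment splits as $\mathrm{t}_h(\uvec{u}_h,\uvec{e}_h,\uvec{e}_h)+\mathrm{t}_h(\uvec{e}_h,\hat{\uvec{u}}_h,\uvec{e}_h)$, of which the first is zero by non-dissipation \eqref{eq:th:non-dissipation} and the second is bounded using \eqref{eq:th:boundedness}; an analogous split of the $\mathrm{j}_h$ increment produces a positive diagonal term $\mathrm{j}_h(\uvec{u}_h;\uvec{e}_h,\uvec{e}_h)\ge 0$ (since $\rho\ge 0$) and a remainder controlled by the Lipschitz estimate \eqref{est:jh.cont}. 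Using coercivity \eqref{eq:ah:stability}, this gives
$$
\nu C_{\mathrm{a}}^{-1}\norm[1,h]{\uvec{e}_h}^2\le \bigl(C_{\mathrm{t}}+C_{\mathrm{j}}h^{1-\frac{d}{4}}\bigr)\norm[1,h]{\hat{\uvec{u}}_h}\,\norm[1,h]{\uvec{e}_h}^2+|\mathcal{E}(\uvec{e}_h)|.
$$
The standard continuous energy identity, Poincar\'e's inequality, and \eqref{est:CI} deliver $\norm[1,h]{\hat{\uvec{u}}_h}\le C_IC_P\norm[L^2(\Omega)^d]{\vec{f}}/\nu$, so the smallness assumption \eqref{f:small} turns the first right-hand side into $\alpha\nu C_{\mathrm{a}}^{-1}\norm[1,h]{\uvec{e}_h}^2$, which can be absorbed to yield $\norm[1,h]{\uvec{e}_h}\lsim (1-\alpha)^{-1}\nu^{-1}C_{\mathrm{a}}h^{k+1}\Xi$.

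For the pressure, I would invoke inf-sup stability \eqref{eq:bh:stability}: rearranging the error equation as $\mathrm{b}_h(\uvec{v}_h,\epsilon_h)=\mathcal{E}(\uvec{v}_h)-\nu\mathrm{a}_h(\uvec{e}_h,\uvec{v}_h)-[\mathrm{t}_h\text{-diff}]-[\mathrm{j}_h\text{-diff}]$, one estimates each right-hand side factor by duality, using boundedness of $\mathrm{a}_h$ (a Cauchy--Schwarz consequence of \eqref{eq:ah:stability}), the bilinear splits and \eqref{eq:th:boundedness}, \eqref{est:jh.cont}, together with $\norm[1,h]{\uvec{u}_h}+\norm[1,h]{\hat{\uvec{u}}_h}\lsim \norm[L^2(\Omega)^d]{\vec{f}}/\nu$ (which is uniformly bounded by \eqref{f:small}). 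This gives $\norm[L^2(\Omega)]{\epsilon_h}\lsim h^{k+1}\Xi+(\nu+1)\norm[1,h]{\uvec{e}_h}$, and dividing by $\nu+1$ and combining with the previously established velocity bound produces \eqref{error:discrete}. The main obstacle is the careful bookkeeping of constants in the test-with-error step: the smallness condition \eqref{f:small} has been calibrated precisely so that $(C_{\mathrm{t}}+C_{\mathrm{j}}h^{1-d/4})\norm[1,h]{\hat{\uvec{u}}_h}C_{\mathrm{a}}\nu^{-1}\le \alpha<1$, and one must verify that the Lipschitz factor $h^{1-d/4}$ appearing in \eqref{est:jh.cont} combines with $C_{\mathrm{t}}$ in exactly the form needed so as not to degrade the asymptotic $h^{k+1}$ rate.
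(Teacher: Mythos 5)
Your proposal follows essentially the same route as the paper's proof: the same error equation with consistency residual $\mathcal{E}_h$, the same test with $\uvec{e}_h$ combining the non-dissipation \eqref{eq:th:non-dissipation} and boundedness \eqref{eq:th:boundedness} of $\mathrm{t}_h$ with the positivity and Lipschitz continuity \eqref{est:jh.cont} of $\mathrm{j}_h$, the same absorption via the smallness condition \eqref{f:small} and the a priori bound $\norm[1,h]{\hat{\uvec{u}}_h}\le C_IC_P\nu^{-1}\norm[L^2(\Omega)^d]{\vec{f}}$, and the same inf-sup argument \eqref{eq:bh:stability} for the pressure. The only discrepancy is a harmless bookkeeping one: with the paper's convention $\mathrm{a}_h(\uvec{v}_h,\uvec{v}_h)\ge C_{\mathrm{a}}\norm[1,h]{\uvec{v}_h}^2$ the coercivity constant enters as $C_{\mathrm{a}}$, not $C_{\mathrm{a}}^{-1}$, which is what makes the absorption match \eqref{f:small} exactly.
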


\begin{remark}[Rates of convergence and convergence without regularity assumption]
As in \cite[Corollary 16]{Di-Pietro.Krell:18}, one can deduce from \eqref{error:discrete} the estimate
\begin{multline*}
\norm[L^2(\Omega)^{d\times d}]{\GRADh\rh\uvec{u}_h-\GRAD\vec{u}}+(\nu+1)^{-1}\norm[L^2(\Omega)]{p_h-p}\\
\lsim
(1-\alpha)^{-1}\nu^{-1} h^{k+1}\left(\nu \seminorm[H^{k+2}(\Th)^d]{\vec{u}}
+\norm[W^{1,4}(\Omega)^d]{\vec{u}}\seminorm[W^{k+1,4}(\Th)^d]{\vec{u}}
+\seminorm[H^{k+1}(\Th)]{p}\right),
\end{multline*}
where $\GRADh$ denotes the broken gradient on $\Th$ and $\rh\uvec{u}_h$ is defined by patching the local velocity reconstructions: $(\rh\uvec{u}_h)_{|T}\coloneq\rT\uvec{u}_T$ for all $T\in\Th$.
Also following the ideas in \cite{Di-Pietro.Krell:18}, we could prove the convergence of the
solution to the HHO scheme towards the solution to \eqref{eq:weak} without requiring any smallness assumption on $\vec{f}$ or any regularity property on the solution other than $(\vec{u},p)\in H^1_0(\Omega)^d\times L^2_0(\Omega)$ (see \cite[Theorem 14]{Di-Pietro.Krell:18}). 
\end{remark}

\begin{proof}[Proof of Theorem \ref{th:error.estimates}]
  (i) \emph{Estimate on the velocity.}
  Set
  \begin{equation}\label{eq:errors}
    (\uvec{e}_h,\epsilon_h)\coloneq (\uvec{u}_h - \hat{\uvec{u}}_h, p_h - \hat{p}_h).
  \end{equation}
Defining the consistency error $\mathcal E_h:\Uh\to\Real$ such that
\[
\mathcal E_h(\uvec{v}_h)\coloneq \int_\Omega \vec{f}\SCAL\vec{v}_h-\nu\mathrm{a}_h(\hat{\uvec{u}}_h,\uvec{v}_h)-\mathrm{t}_h(\hat{\uvec{u}}_h,\hat{\uvec{u}}_h,\uvec{v}_h)-\mathrm{j}_h(\hat{\uvec{u}}_h;\hat{\uvec{u}}_h,\uvec{v}_h)-\mathrm{b}_h(\uvec{v}_h,\hat{p}_h),
\]
we have, substituting $\int_\Omega \vec{f}\SCAL\vec{v}_h$ in \eqref{eq:discrete:momentum},
\begin{multline}
\nu \mathrm{a}_h(\uvec{e}_h,\uvec{v}_h) + \mathrm{t}_h(\uvec{u}_h,\uvec{u}_h,\uvec{v}_h)-
\mathrm{t}_h(\hat{\uvec{u}}_h,\hat{\uvec{u}}_h,\uvec{v}_h)\\
+\mathrm{j}_h(\uvec{u}_h;\uvec{u}_h,\uvec{v}_h)-\mathrm{j}_h(\hat{\uvec{u}}_h;\hat{\uvec{u}}_h,\uvec{v}_h)
+\mathrm{b}_h(\uvec{v}_h,\epsilon_h)=\mathcal E_h(\uvec{v}_h).
\label{eq:error}\end{multline}
Make $\uvec{v}_h=\uvec{e}_h$.
The skew-symmetry property \eqref{eq:th:non-dissipation} of $\mathrm{t}_h$ together with linearity in its second argument yield
$0=\mathrm{t}_h(\uvec{u}_h,\uvec{e}_h,\uvec{e}_h)=\mathrm{t}_h(\uvec{u}_h,\uvec{u}_h,\uvec{e}_h)
-\mathrm{t}_h(\uvec{u}_h,\hat{\uvec{u}}_h,\uvec{e}_h)$ and thus, by the boundedness \eqref{eq:th:boundedness}, 
\[
\left|\mathrm{t}_h(\uvec{u}_h,\uvec{u}_h,\uvec{e}_h)-
\mathrm{t}_h(\hat{\uvec{u}}_h,\hat{\uvec{u}}_h,\uvec{e}_h)\right|=
\left|\mathrm{t}_h(\uvec{e}_h,\hat{\uvec{u}}_h,\uvec{e}_h)\right|\le
C_{\mathrm{t}} \norm[1,h]{\uvec{e}_h}^2\norm[1,h]{\hat{\uvec{u}}_h}.
\]
Moreover, by the continuity property \eqref{est:jh.cont} of $\mathrm{j}_h$ and the positivity of $\mathrm{j}_h(\uvec{u}_h;\uvec{e}_h,\uvec{e}_h)$,
\begin{align}
\mathrm{j}_h(\uvec{u}_h;\uvec{u}_h,\uvec{e}_h)-\mathrm{j}_h(\hat{\uvec{u}}_h;\hat{\uvec{u}}_h,\uvec{e}_h)={}&
\mathrm{j}_h(\uvec{u}_h;\uvec{e}_h,\uvec{e}_h)+\mathrm{j}_h(\uvec{u}_h;\hat{\uvec{u}}_h,\uvec{e}_h)-
\mathrm{j}_h(\hat{\uvec{u}}_h;\hat{\uvec{u}}_h,\uvec{e}_h)\nonumber\\
\ge{}&
-C_{\mathrm{j}} h^{1-\frac{d}{4}}\norm[1,h]{\uvec{u}_h-\hat{\uvec{u}}_h}\norm[1,h]{\hat{\uvec{u}}_h}
\norm[1,h]{\uvec{e}_h}.
\label{eq:est.jh}\end{align}
Finally, by \eqref{eq:discrete:mass}, \eqref{eq:bh:consistency.2} and \eqref{eq:weak:mass}, we have
$$
\mathrm{b}_h(\vec{e}_h,\epsilon_h)
=\mathrm{b}_h(\uvec{u}_h,\epsilon_h)-\mathrm{b}_h(\hat{\uvec{u}}_h,\epsilon_h)
=\mathrm{b}_h(\uvec{u}_h,\epsilon_h)-b(\vec{u},\epsilon_h)
=0.
$$
Hence, coming back to \eqref{eq:error} with $\uvec{v}_h=\uvec{e}_h$ and using the stability \eqref{eq:ah:stability} of $\mathrm{a}_h$,
\begin{align}
	\mathcal E_h(\uvec{e}_h)\ge{}& \left[\nu C_{\mathrm{a}}-C_{\mathrm{t}} \norm[1,h]{\hat{\uvec{u}}_h}-C_{\mathrm{j}} h^{1-\frac{d}{4}}\norm[1,h]{\hat{\uvec{u}}_h}\right]\norm[1,h]{\uvec{e}_h}^2\nonumber\\
	\ge{}& \left[\nu C_{\mathrm{a}}-\left(C_{\mathrm{t}} +C_{\mathrm{j}} h^{1-\frac{d}{4}}\right)C_I\nu^{-1}C_P\norm[L^2(\Omega)^d]{\vec{f}}\right]\norm[1,h]{\uvec{e}_h}^2\nonumber\\
	\ge{}& (1-\alpha)\nu C_{\mathrm{a}}\norm[1,h]{\uvec{e}_h}^2,
\label{Eh.coer}
\end{align} 
where we have used the definition \eqref{est:CI} of $C_I$, the basic estimate $\norm[H^1_0(\Omega)^d]{\vec{u}}\le
\nu^{-1}C_P\norm[L^2(\Omega)^d]{\vec{f}}$ (obtained by using $\vec{v}=\vec{u}$ in the weak formulation \eqref{eq:weak}), and \eqref{f:small}.

To estimate $\mathcal E_h(\vec{e}_h)$ from above, we recall that $\vec{f}=-\nu\Delta \vec{u} + (\vec{u}\SCAL\GRAD)\vec{u}+\GRAD p$, so that, for any $\uvec{v}_h\in \Uh$,
\begin{align*}
\mathcal E_h(\vec{v}_h)={}&-\nu\left(\int_\Omega \Delta\vec{u}\SCAL\vec{v}_h+\mathrm{a}_h(\hat{\uvec{u}}_h,\uvec{v}_h)\right)+\left(\int_\Omega (\vec{u}\SCAL\GRAD)\vec{u}\SCAL \vec{v}_h-\mathrm{t}_h(\hat{\uvec{u}}_h,\hat{\uvec{u}}_h,\uvec{v}_h)\right)\\
&-\mathrm{j}_h(\hat{\uvec{u}}_h;\hat{\uvec{u}}_h,\uvec{v}_h)+\left(\int_\Omega \GRAD p\SCAL\vec{v}_h
-\mathrm{b}_h(\uvec{v}_h,\hat{p}_h)\right).
\end{align*}
Using the consistency estimates \eqref{eq:ah:consistency}, \eqref{eq:th:consistency} (together with $\DIV\vec{u}=0$), \eqref{est:jh.const} and \eqref{eq:bh:consistency}, all the elementary components of $\mathcal E_h(\uvec{v}_h)$ can be estimated and we find
\begin{equation}\label{eq:est.Eh}
  |\mathcal E_h(\uvec{v}_h)|\lsim h^{k+1}\left(\nu \seminorm[H^{k+2}(\Th)^d]{\vec{u}}
  +\norm[W^{1,4}(\Omega)^d]{\vec{u}}\seminorm[W^{k+1,4}(\Th)^d]{\vec{u}}
  +\seminorm[H^{k+1}(\Th)]{p}\right)\norm[1,h]{\uvec{v}_h}.
\end{equation}
Applied to $\uvec{v}_h=\uvec{e}_h$ and combined with \eqref{Eh.coer}, this proves the estimate on the velocity error in \eqref{error:discrete}.
\medskip\\
(ii) \emph{Estimate on the pressure.}
To estimate the error on the pressure, we start from the stability property \eqref{eq:bh:stability} of $\mathrm{b}_h$ and we use the error equation \eqref{eq:error} to write
\begin{align}
	\norm[L^2(\Omega)]{\epsilon_h}\lsim{}& \sup_{\uvec{v}_h\in\Uh,\,\norm[1,h]{\uvec{v}_h}=1}\mathrm{b}_h(\uvec{v}_h,\epsilon_h)\nonumber\\
	={}&\sup_{\uvec{v}_h\in\Uh,\,\norm[1,h]{\uvec{v}_h}=1}\Big(\mathcal E_h(\uvec{v}_h)-\nu \mathrm{a}_h(\uvec{e}_h,\uvec{v}_h) - \mathrm{t}_h(\uvec{u}_h,\uvec{u}_h,\uvec{v}_h)+\mathrm{t}_h(\hat{\uvec{u}}_h,\hat{\uvec{u}}_h,\uvec{v}_h)\\
	&\qquad\qquad \qquad\qquad -\mathrm{j}_h(\uvec{u}_h;\uvec{u}_h,\uvec{v}_h)+\mathrm{j}_h(\hat{\uvec{u}}_h;\hat{\uvec{u}}_h,\uvec{v}_h)\Big).
\label{est:pressure.0}
\end{align}
By trilinearity of $\mathrm{t}_h$, 
\begin{equation}\label{est:press.1}
	\mathrm{t}_h(\uvec{u}_h,\uvec{u}_h,\uvec{v}_h)-\mathrm{t}_h(\hat{\uvec{u}}_h,\hat{\uvec{u}}_h,\uvec{v}_h)=\mathrm{t}_h(\uvec{e}_h,\uvec{u}_h,\uvec{v}_h)+\mathrm{t}_h(\hat{\uvec{u}}_h,\uvec{e}_h,\uvec{v}_h).
\end{equation}
By bilinearity of $\mathrm{j}_h(\uvec{u}_h;\cdot,\cdot)$,
\begin{equation}\label{est:press.2}
\mathrm{j}_h(\hat{\uvec{u}}_h;\hat{\uvec{u}}_h,\uvec{v}_h)-\mathrm{j}_h(\uvec{u}_h;\uvec{u}_h,\uvec{v}_h)
=
\mathrm{j}_h(\hat{\uvec{u}}_h;\hat{\uvec{u}}_h,\uvec{v}_h)-\mathrm{j}_h(\uvec{u}_h;\hat{\uvec{u}}_h,\uvec{v}_h)
-\mathrm{j}_h(\uvec{u}_h;\uvec{e}_h,\uvec{v}_h).
\end{equation}
Hence, the estimate \eqref{eq:est.Eh} on $\mathcal E_h(\uvec{v}_h)$ and the boundedness properties \eqref{eq:ah:stability} of $\mathrm{a}_h$ (together with a Cauchy--Schwarz inequality) and \eqref{eq:th:boundedness} of $\mathrm{t}_h$, along with the continuity estimate \eqref{est:jh.cont} on $\mathrm{j}_h$ (with $(\uvec{v}_h,\uvec{w}_h,\uvec{z}_h,\uvec{z}'_h)=(\hat{\uvec{u}}_h,\uvec{u}_h,\hat{\uvec{u}}_h,\uvec{v}_h)$ and $(\uvec{v}_h,\uvec{w}_h,\uvec{z}_h,\uvec{z}'_h)=(\uvec{u}_h,\uvec{0},\uvec{e}_h,\uvec{v}_h)$) yield
\begin{align*}
  \norm[L^2(\Omega)]{\epsilon_h}\lsim{}& h^{k+1}\left(\nu \seminorm[H^{k+2}(\Th)^d]{\vec{u}}
  +\norm[W^{1,4}(\Omega)^d]{\vec{u}}\seminorm[W^{k+1,4}(\Th)^d]{\vec{u}}
  +\seminorm[H^{k+1}(\Th)]{p}\right)\\
  &+\nu \norm[1,h]{\uvec{e}_h}
  + (1+h^{1-\frac{d}{4}})
  \norm[1,h]{\uvec{e}_h}\left(\norm[1,h]{\uvec{u}_h}+\norm[1,h]{\hat{\uvec{u}}_h}\right).
\end{align*}
The proof of the estimate on $\norm[L^2(\Omega)]{p_h-\hat{p}_h}$ is completed by plugging into the expression above the estimate on $\norm[1,h]{\uvec{e}_h}$ already established in \eqref{error:discrete}, by using the definition \eqref{est:CI} of $C_I$, and by writing $h\le {\rm diam}(\Omega)$ and $\norm[1,h]{\uvec{u}_h}\lsim \nu^{-1}\norm[L^2(\Omega)^d]{\vec{f}}$ (obtained using $\uvec{v}_h=\uvec{u}_h$ in \eqref{eq:discrete:momentum} and invoking \eqref{eq:ah:stability}, \eqref{eq:th:non-dissipation}, $\mathrm{j}_h(\uvec{u}_h;\uvec{u}_h,\uvec{u}_h)\ge 0$, and \eqref{eq:sobolev.embeddings} with $q=2$).
\end{proof}

\subsection{Links with other methods}

As noticed in \cite[Section 2.5]{Di-Pietro.Ern.ea:14} for pure diffusion equations, the lowest order HHO method (that is, the discretisation of the diffusive bilinear form presented in Section \ref{sec:diff} with $k{=}0$) is a particular case of the Hybrid Finite Volume method, which is itself a specific instance of the Hybrid Mixed Mimetic (HMM) method \cite{Droniou.Eymard.ea:10}. This comparison was extended in \cite[Section 5.4]{Di-Pietro.Droniou.ea:15} to advection--diffusion--reaction equations, where it was shown that, with a convective term discretised using the directional derivative \eqref{eq:GwT} (with a known velocity field instead of $\uvec{w}_T$) and a stabilisation term as in Section \ref{sec:conv.stab}, the HHO method for $k{=}0$ corresponds to the HMM discretisation of advection--diffusion--reaction equations presented in \cite{Beirao-da-Veiga.Droniou.ea:10}.

An initial version of the HMM method, in its Mixed Finite Volume form, was applied to the Navier--Stokes equations in \cite{Droniou.Eymard:09}, with upwinding of the convective term, albeit using a stabilisation of the diffusion that did not exactly reproduce linear solutions (contrary to $\mathrm{s}_T$ defined by \eqref{eq:def.sT} for $k{=}0$). This scheme was modified in \cite{Droniou:10} to use the standard HMM stabilisation that includes \eqref{eq:def.sT} (with $k{=}0$) as a specific case, and to include all the various discretisations of the convective term as in  \cite{Beirao-da-Veiga.Droniou.ea:10} or Section \ref{sec:conv.stab}. The HHO method we present here can therefore be considered as a higher-order extension of the HMM scheme for Navier--Stokes as in \cite{Droniou.Eymard:09,Droniou:10}

As shown in \cite[Section 4]{Di-Pietro.Droniou.ea:18}, the HHO method for diffusion equations is very close to the non-conforming Virtual Element Method (VEM) \cite{Ayuso-de-Dios.Lipnikov.ea:16} and the high-order mixed Mimetic Finite Difference method \cite{Lipnikov.Manzini:14}. The nonconforming VEM has been proposed and analysed for the Stokes equations in \cite{Cangiani.Gyrya.ea:16}, with strongly enforced boundary conditions.
The HHO methods of Section \ref{sec:discrete.problem} differs from the one presented in this reference, among others, in the choice of the degree for element-based unknowns ($k$ instead of $k-1$), which is crucial for the optimal convergence of the convective term appearing in the complete Navier--Stokes system. Another relevant difference is the possibility to weakly enforce boundary conditions.


\section{Numerical tests}\label{sec:tests}

This section contains an extensive numerical validation of the proposed HHO methods.
All the steady-state computations presented hereafter are performed by 
means of the pseudo-transient-continuation algorithm analyzed by \cite{Kelley.Keyes:98}
employing the Selective Evolution Relaxation (SER) strategy \cite{Mulder.Van-Leer:85} for evolving 
the pseudo-time step according to the Newton's equations residual.
Convergence to steady-state is achieved when the Euclidean norm of the momentum equation residual 
drops below $10^{-12}$.
At each pseudo-time step, the linearised equations are exactly solved
by means of the direct solver Pardiso \cite{Schenk.Gartner.ea:01}, distributed 
as part of the Intel Math Kernel Library (Intel MKL). 
Accordingly, the Euclidean norm of the continuity equation residual 
is comparable to the machine epsilon at all pseudo-time steps. 
The code implementation makes extensive use of the linear 
algebra Eigen open-source library \cite{Guennebaud.Jacob.ea:10}.

\subsection{Kovasznay flow}\label{sec:tests:kovasznay}

We start by assessing the convergence properties of the method using the analytical solution of \cite{Kovasznay:48}.
Specifically, in dimension $d=2$ we solve on the square domain $\Omega\coloneq(-0.5,1.5)\times(0,2)$ 
the Dirichlet problem corresponding to the exact solution $(\vec{u},p)$ such that, 
introducing the global Reynolds number $\Reynolds\coloneq \nu^{-1}$ and letting $\lambda\coloneq\frac{\Reynolds}{2}-\left(\frac{\Reynolds^2}{4} + 4\pi^2\right)^{\frac12}$, the velocity components are given by
$$
u_1(\vec{x}) \coloneq 1-\exp(\lambda x_1)\cos(2\pi x_2),\qquad
u_2(\vec{x}) \coloneq \frac{\lambda}{2\pi}\exp(\lambda x_1)\sin(2\pi x_2),
$$
while the pressure is given by
$$
p(\vec{x}) \coloneq -\frac12\exp(2\lambda x_1) + \frac{\lambda}{2}\left(\exp(4\lambda)-1\right).
$$
We take here $\nu=0.025$, corresponding to $\Reynolds=40$.

We consider computations with polynomial degrees $k\in\{0,\ldots,5\}$ over a sequence of uniformly $h$-refined Cartesian grids 
having $2^i$ ($i=2,3,\ldots,7$) elements in each direction.
We report in Tables \ref{tab:test:kovasznay.strong} and \ref{tab:test:kovasznay.weak}, respectively, the results for the methods \eqref{eq:discrete.strong.bc} 
with upwind stabilisation and \eqref{eq:discrete} without convective stabilisation.
While any other combination is possible, this setup is preferred here since we noticed that adding the convective stabilisation term with weakly enforced boundary conditions
can require in some cases to introduce a penalty parameter larger than 1 in the last term of \eqref{eq:ah}; see Remark \ref{rem:weak.bc.variations} on this point.
We have observed that this can also be avoided by simply removing convective stabilisation on the boundary from \eqref{eq:jh}.

The following quantities are monitored (see \eqref{eq:errors} for the definition of the errors):
the energy norm of the error on the velocity $\norm[\nu,h]{\uvec{e}_h}\coloneq\left(\nu\mathrm{a}_h(\uvec{e}_h,\uvec{e}_h)\right)^{\frac12}$,
the $L^2$-error on the velocity $\norm[L^2(\Omega)^d]{\vec{e}_h}$,
the $L^2$-error on the pressure $\norm[L^2(\Omega)]{\epsilon_h}$,
and the assembly and solution times (respectively denoted by $\tau_{\rm ass}$ and $\tau_{\rm sol}$) running a serial version of the code on a 2017 quad-core CPU laptop.
The \emph{assembly time} takes into account:
(i) the element-by-element computation of bilinear and trilinear forms;
(ii) the element-by-element static condensation;
(iii) the assembly of the statically condensed matrix blocks into the global matrix;
(iv) the introduction of a Lagrange multiplier to fix the mean value of pressure over $\Omega$;
(v) in case of strong boundary conditions, the elimination of boundary face unknowns from the global matrix.
We remark that the assembly of the trilinear form requires to revert static condensation in order to
compute the velocity solution over mesh elements. This \emph{back solve} post-processing can be
performed element-by-element, but its computational expense is comparable to that of matrix assembly: 
indeed, all bilinear and trilinear forms need to be recomputed. 
To avoid incurring this cost, in our implementation the back solve is performed element-by-element 
during the matrix assembly, meaning that the bilinear and trilinear forms are computed once and twice, respectively. 
The \emph{solution time} refers to the wall-clock-cpu time required by the direct solver to perform the LU factorization of 
the global system matrix and compute the solution increment to update the globally coupled unknowns.

Denoting by $e_i$ and $h_i$, respectively, the error in a given norm and the meshsize corresponding 
to a refinement iteration $i$, the estimated order of convergence (EOC) is obtained according to the following formula:
$$
{\rm EOC} = \frac{\log e_i - \log e_{i+1}}{\log h_i - \log h_{i+1}}.
$$
Besides discretization errors, EOCs and computation times, in Tables \ref{tab:test:kovasznay.strong} and \ref{tab:test:kovasznay.weak} 
we also report the size of the statically condensed global system matrix ($N_{\rm dof}$) and its number of non-zero entries ($N_{\rm nz}$).

\begin{table}
  \caption{Convergence results for the Kovasznay problem at $\Reynolds=40$ with strongly enforced boundary conditions and convective term stabilisation. \label{tab:test:kovasznay.strong}}
  \begin{footnotesize}
    \begin{tabular}{cccccccccc}
      \toprule
      $N_{\rm dof}$  & $N_{\rm nz}$ & $\norm[\nu,h]{\uvec{e}_h}$ & EOC & $\norm[L^2(\Omega)^d]{\vec{e}_h}$ & EOC & $\norm[L^2(\Omega)]{\epsilon_h}$ & EOC & $\tau_{\rm ass}$ & $\tau_{\rm sol}$ \\
      \midrule
      \multicolumn{10}{c}{$k{=}0$} \\
      \midrule
      65         & 736        & 9.37e-01   & --         & 1.40e-01   & --         & 6.84e-01   & --         & 1.31e-02   & 8.52e-03   \\ 
      289        & 3808       & 1.13e+00   & -0.27      & 5.50e-01   & -1.98      & 1.96e-01   & 1.80       & 5.92e-02   & 4.90e-02   \\ 
      1217       & 17056      & 9.14e-01   & 0.31       & 2.26e-01   & 1.28       & 1.02e-01   & 0.94       & 1.02e-01   & 1.06e-01   \\ 
      4993       & 71968      & 6.26e-01   & 0.55       & 7.89e-02   & 1.52       & 3.52e-02   & 1.54       & 3.10e-01   & 4.46e-01   \\ 
      20225      & 295456     & 3.87e-01   & 0.70       & 2.47e-02   & 1.68       & 9.78e-03   & 1.85       & 1.02e+00   & 2.17e+00   \\ 
      81409      & 1197088    & 2.47e-01   & 0.65       & 8.06e-03   & 1.61       & 3.09e-03   & 1.66       & 3.73e+00   & 1.49e+01   \\ 
      \midrule
      \multicolumn{10}{c}{$k{=}1$} \\
      \midrule
      113        & 2464       & 7.31e-01   & --         & 5.37e-01   & --         & 2.49e-01   & --         & 2.51e-02   & 1.72e-02   \\ 
      513        & 13056      & 3.83e-01   & 0.93       & 1.54e-01   & 1.80       & 4.29e-02   & 2.54       & 4.77e-02   & 4.72e-02   \\ 
      2177       & 59008      & 1.02e-01   & 1.90       & 2.13e-02   & 2.85       & 3.98e-03   & 3.43       & 1.29e-01   & 1.79e-01   \\ 
      8961       & 249984     & 2.93e-02   & 1.80       & 2.97e-03   & 2.84       & 6.54e-04   & 2.61       & 5.13e-01   & 1.01e+00   \\ 
      36353      & 1028224    & 8.23e-03   & 1.83       & 3.99e-04   & 2.90       & 1.28e-04   & 2.35       & 2.05e+00   & 5.28e+00   \\ 
      146433     & 4169856    & 2.26e-03   & 1.86       & 5.21e-05   & 2.94       & 2.65e-05   & 2.27       & 7.25e+00   & 2.97e+01   \\ 
      \midrule
      \multicolumn{10}{c}{$k{=}2$} \\
      \midrule
      161        & 5216       & 3.50e-01   & --         & 2.09e-01   & --         & 6.42e-02   & --         & 3.44e-02   & 2.26e-02   \\ 
      737        & 27872      & 3.76e-02   & 3.22       & 1.34e-02   & 3.96       & 2.07e-03   & 4.95       & 6.95e-02   & 6.88e-02   \\ 
      3137       & 126368     & 6.96e-03   & 2.43       & 1.31e-03   & 3.36       & 1.48e-04   & 3.80       & 2.66e-01   & 3.60e-01   \\ 
      12929      & 536096     & 1.06e-03   & 2.72       & 9.48e-05   & 3.79       & 1.77e-05   & 3.07       & 1.11e+00   & 2.02e+00   \\ 
      52481      & 2206496    & 1.55e-04   & 2.77       & 6.36e-06   & 3.90       & 2.27e-06   & 2.96       & 4.16e+00   & 1.13e+01   \\ 
      211457     & 8951072    & 2.21e-05   & 2.81       & 4.13e-07   & 3.95       & 2.72e-07   & 3.06       & 1.51e+01   & 6.02e+01   \\ 
      \midrule
      \multicolumn{10}{c}{$k{=}3$} \\
      \midrule
      209        & 8992       & 7.93e-02   & --         & 4.41e-02   & --         & 7.58e-03   & --         & 4.59e-02   & 3.00e-02   \\ 
      961        & 48256      & 6.23e-03   & 3.67       & 1.98e-03   & 4.48       & 2.97e-04   & 4.67       & 1.20e-01   & 1.13e-01   \\ 
      4097       & 219136     & 4.16e-04   & 3.90       & 6.43e-05   & 4.95       & 1.32e-05   & 4.49       & 5.05e-01   & 6.10e-01   \\ 
      16897      & 930304     & 3.09e-05   & 3.75       & 2.20e-06   & 4.87       & 8.19e-07   & 4.01       & 1.83e+00   & 3.27e+00   \\ 
      68609      & 3830272    & 2.28e-06   & 3.76       & 7.40e-08   & 4.89       & 5.12e-08   & 4.00       & 7.04e+00   & 1.79e+01   \\ 
      276481     & 15540736   & 1.63e-07   & 3.81       & 2.42e-09   & 4.93       & 3.14e-09   & 4.03       & 2.81e+01   & 1.09e+02   \\ 
      \midrule
      \multicolumn{10}{c}{$k{=}4$} \\
      \midrule
      257        & 13792      & 1.42e-02   & --         & 7.89e-03   & --         & 1.83e-03   & --         & 7.29e-02   & 4.23e-02   \\ 
      1185       & 74208      & 4.24e-04   & 5.07       & 1.14e-04   & 6.11       & 2.05e-05   & 6.48       & 2.29e-01   & 1.87e-01   \\ 
      5057       & 337312     & 1.81e-05   & 4.55       & 2.57e-06   & 5.48       & 6.39e-07   & 5.00       & 9.31e-01   & 9.60e-01   \\ 
      20865      & 1432608    & 6.90e-07   & 4.71       & 4.55e-08   & 5.82       & 2.28e-08   & 4.81       & 3.64e+00   & 5.71e+00   \\ 
      84737      & 5899552    & 2.59e-08   & 4.74       & 7.59e-10   & 5.91       & 7.64e-10   & 4.90       & 1.43e+01   & 3.34e+01   \\ 
      341505     & 23938848   & 9.53e-10   & 4.76       & 1.23e-11   & 5.95       & 2.42e-11   & 4.98       & 5.75e+01   & 2.05e+02   \\ 
      \midrule
      \multicolumn{10}{c}{$k{=}5$} \\
      \midrule
      305        & 19616      & 2.28e-03   & --         & 1.05e-03   & --         & 1.70e-04   & --         & 1.28e-01   & 5.63e-02   \\ 
      1409       & 105728     & 4.01e-05   & 5.83       & 1.05e-05   & 6.65       & 2.05e-06   & 6.37       & 3.95e-01   & 2.19e-01   \\ 
      6017       & 480896     & 7.21e-07   & 5.80       & 8.98e-08   & 6.87       & 3.21e-08   & 6.00       & 1.60e+00   & 1.32e+00   \\ 
      24833      & 2043008    & 1.37e-08   & 5.72       & 7.89e-10   & 6.83       & 5.43e-10   & 5.88       & 6.45e+00   & 8.29e+00   \\ 
      100865     & 8414336    & 2.56e-10   & 5.74       & 6.72e-12   & 6.88       & 9.14e-12   & 5.89       & 2.54e+01   & 5.01e+01   \\ 
      \bottomrule
    \end{tabular}
  \end{footnotesize}
\end{table}

\begin{table}
  \caption{Convergence results for the Kovasznay problem at $\Reynolds=40$ with weakly enforced boundary conditions and no convective term stabilisation. \label{tab:test:kovasznay.weak}}
  \begin{footnotesize}
    \begin{tabular}{cccccccccc}
      \toprule
      $N_{\rm dof}$  & $N_{\rm nz}$ & $\norm[\nu,h]{\uvec{e}_h}$ & EOC & $\norm[L^2(\Omega)^d]{\vec{e}_h}$ & EOC & $\norm[L^2(\Omega)]{\epsilon_h}$ & EOC & $\tau_{\rm ass}$ & $\tau_{\rm sol}$ \\
      \midrule
      \multicolumn{10}{c}{$k{=}0$} \\
      \midrule
      97         & 1216       & 1.07e+00   & --         & 3.93e-01   & --         & 6.80e-01   & --         & 2.68e-02   & 2.31e-02   \\ 
      353        & 4800       & 1.70e+00   & -0.67      & 9.58e-01   & -1.28      & 2.79e-01   & 1.28       & 3.41e-02   & 3.71e-02   \\ 
      1345       & 19072      & 1.44e+00   & 0.24       & 3.89e-01   & 1.30       & 1.32e-01   & 1.09       & 6.68e-02   & 8.04e-02   \\ 
      5249       & 76032      & 8.77e-01   & 0.72       & 1.18e-01   & 1.72       & 4.93e-02   & 1.42       & 2.15e-01   & 3.52e-01   \\ 
      20737      & 303616     & 4.78e-01   & 0.88       & 3.23e-02   & 1.87       & 1.49e-02   & 1.72       & 8.07e-01   & 1.95e+00   \\ 
      82433      & 1213440    & 2.46e-01   & 0.96       & 8.32e-03   & 1.96       & 4.08e-03   & 1.87       & 3.19e+00   & 1.47e+01   \\ 
      \midrule
      \multicolumn{10}{c}{$k{=}1$} \\
      \midrule
      177        & 4256       & 1.02e+00   & --         & 7.27e-01   & --         & 2.69e-01   & --         & 1.44e-02   & 1.60e-02   \\ 
      641        & 16768      & 4.20e-01   & 1.28       & 1.66e-01   & 2.13       & 4.96e-02   & 2.44       & 3.59e-02   & 4.25e-02   \\ 
      2433       & 66560      & 1.40e-01   & 1.58       & 2.66e-02   & 2.64       & 8.60e-03   & 2.53       & 1.09e-01   & 1.70e-01   \\ 
      9473       & 265216     & 4.06e-02   & 1.79       & 3.55e-03   & 2.91       & 1.29e-03   & 2.74       & 4.62e-01   & 1.10e+00   \\ 
      37377      & 1058816    & 1.03e-02   & 1.97       & 4.37e-04   & 3.02       & 1.79e-04   & 2.85       & 1.91e+00   & 5.64e+00   \\ 
      148481     & 4231168    & 2.61e-03   & 1.99       & 5.46e-05   & 3.00       & 2.96e-05   & 2.60       & 7.07e+00   & 3.32e+01   \\ 
      \midrule
      \multicolumn{10}{c}{$k{=}2$} \\
      \midrule
      257        & 9152       & 5.50e-01   & --         & 3.16e-01   & --         & 1.20e-01   & --         & 2.23e-02   & 2.33e-02   \\ 
      929        & 36032      & 7.58e-02   & 2.86       & 2.46e-02   & 3.68       & 6.03e-03   & 4.31       & 6.11e-02   & 7.47e-02   \\ 
      3521       & 142976     & 1.23e-02   & 2.62       & 1.84e-03   & 3.74       & 3.69e-04   & 4.03       & 2.41e-01   & 3.90e-01   \\ 
      13697      & 569600     & 1.70e-03   & 2.86       & 1.12e-04   & 4.03       & 3.63e-05   & 3.35       & 1.02e+00   & 2.21e+00   \\ 
      54017      & 2273792    & 2.21e-04   & 2.95       & 6.87e-06   & 4.03       & 3.84e-06   & 3.24       & 3.62e+00   & 1.17e+01   \\ 
      214529     & 9085952    & 2.80e-05   & 2.98       & 4.28e-07   & 4.00       & 3.72e-07   & 3.37       & 1.40e+01   & 6.76e+01   \\ 
      \midrule
      \multicolumn{10}{c}{$k{=}3$} \\
      \midrule
      337        & 15904      & 1.10e-01   & --         & 6.02e-02   & --         & 2.90e-02   & --         & 3.85e-02   & 3.26e-02   \\ 
      1217       & 62592      & 9.17e-03   & 3.58       & 2.30e-03   & 4.71       & 7.22e-04   & 5.33       & 1.05e-01   & 1.23e-01   \\ 
      4609       & 248320     & 6.93e-04   & 3.73       & 7.74e-05   & 4.89       & 2.38e-05   & 4.92       & 4.65e-01   & 6.74e-01   \\ 
      17921      & 989184     & 4.81e-05   & 3.85       & 2.44e-06   & 4.99       & 1.18e-06   & 4.34       & 1.82e+00   & 3.73e+00   \\ 
      70657      & 3948544    & 3.13e-06   & 3.94       & 7.88e-08   & 4.95       & 5.79e-08   & 4.35       & 6.79e+00   & 2.01e+01   \\ 
      280577     & 15777792   & 1.99e-07   & 3.97       & 2.51e-09   & 4.97       & 2.68e-09   & 4.43       & 2.68e+01   & 1.20e+02   \\ 
      \midrule
      \multicolumn{10}{c}{$k{=}4$} \\
      \midrule
      417        & 24512      & 2.46e-02   & --         & 7.32e-03   & --         & 5.12e-03   & --         & 6.26e-02   & 4.68e-02   \\ 
      1505       & 96448      & 9.27e-04   & 4.73       & 2.17e-04   & 5.08       & 7.04e-05   & 6.19       & 1.93e-01   & 1.89e-01   \\ 
      5697       & 382592     & 3.61e-05   & 4.68       & 3.62e-06   & 5.91       & 1.11e-06   & 5.98       & 8.13e-01   & 1.02e+00   \\ 
      22145      & 1523968    & 1.24e-06   & 4.87       & 5.36e-08   & 6.08       & 3.07e-08   & 5.18       & 3.13e+00   & 6.02e+00   \\ 
      87297      & 6083072    & 4.01e-08   & 4.95       & 8.21e-10   & 6.03       & 8.08e-10   & 5.25       & 1.19e+01   & 3.37e+01   \\ 
      346625     & 24306688   & 1.27e-09   & 4.98       & 1.28e-11   & 6.00       & 2.03e-11   & 5.31       & 4.68e+01   & 2.02e+02   \\ 
      \midrule
      \multicolumn{10}{c}{$k{=}5$} \\
      \midrule
      497        & 34976      & 6.48e-03   & --         & 1.76e-03   & --       & 1.02e-03   & --         & 1.23e-01   & 7.22e-02   \\ 
      1793       & 137600     & 7.07e-05   & 6.52       & 1.34e-05   & 7.04     & 4.58e-06   & 7.81       & 4.06e-01   & 2.95e-01   \\ 
      6785       & 545792     & 1.28e-06   & 5.79       & 1.10e-07   & 6.94     & 4.40e-08   & 6.70       & 1.51e+00   & 1.56e+00   \\ 
      26369      & 2173952    & 2.20e-08   & 5.87       & 8.84e-10   & 6.95     & 5.86e-10   & 6.23       & 5.67e+00   & 8.48e+00   \\ 
      103937     & 8677376    & 3.56e-10   & 5.95       & 7.20e-12   & 6.94     & 7.42e-12   & 6.30       & 2.28e+01   & 5.14e+01   \\ 
      \bottomrule
    \end{tabular}
  \end{footnotesize}
\end{table}

Numerical results confirm the theoretical $h$-convergence rates estimates; 
the EOC for the pressure error in $L^2$ norm is around (for strongly enforced boundary conditions) 
or exceeds (for weakly imposed boundary conditions) 
$(k{+}1)$, and we approach an EOC of $(k{+}1)$ and $(k{+}2)$
for the velocity error in the energy and the $L^2$ norm, respectively.
Focusing on the velocity errors, we remark that the convergence rates provided 
by method \eqref{eq:discrete.strong.bc} with strongly enforced boundary conditions are slightly sub-optimal,
while method \eqref{eq:discrete} with weakly enforced boundary conditions is very close to providing optimal rates of convergence.
We numerically verified that this occurrence is to be ascribed to convective term stabilization, active in the former while 
switched off in the latter configuration. 
Note that the EOC of $k{=}0$ degree discretizations is severely impacted, probably due to the higher relative significance 
of the upwind stabilization with respect to the centered contribution in the discretisation of the convective term.
Indeed, the highest jumps between element and faces unknowns are observed for under-resolved low degree computations. 

\subsection{Two- and three-dimensional lid-driven cavity flow}

We next use the HHO method \eqref{eq:discrete.strong.bc} to solve the lid-driven cavity flow, one of the most extensively studied problems in fluid mechanics.
The computational domain is either the unit square $\Omega=(0,1)^2$ or the unit cube $\Omega=(0,1)^3$, depending on the flow space dimensions.
Homogeneous (wall) boundary conditions are enforced at all but the top horizontal wall (at $x_2=1$), 
where we enforce a unit tangential velocity (that is, $\vec{u}=(1,0)$ if $d=2$, $\vec{u}=(1,0,0)$ if $d=3$).
We note that this boundary condition is incompatible with the formulation \eqref{eq:weak}, 
even modified to account for non-homogeneous boundary conditions, since the solution to the lid driven cavity does not 
belong to $H^1(\Omega)^d$; it is however, as mentioned, a very classical and well-understood test that informs on the quality of the numerical scheme.
Due to the crucial importance of suitably enforcing the velocity discontinuity at the top corners of the cavity we rely on strong imposition of boundary conditions.

In Figures \ref{fig:cavity:Re1000}, \ref{fig:cavity:Re5000}, \ref{fig:cavity:Re10000}, and \ref{fig:cavity:Re20000} 
we report the horizontal component $u_1$ of the velocity along the vertical centerline $x_1=\frac12$ and the vertical component $u_2$ 
of the velocity along the horizontal centerline $x_2=\frac12$ for the two dimensional flow at global Reynolds numbers $\Reynolds\coloneq\frac1{\nu}$
respectively equal to $\pgfmathprintnumber{1000}$, $\pgfmathprintnumber{5000}$, $\pgfmathprintnumber{10000}$, and $\pgfmathprintnumber{20000}$.
The reference computation is carried out on a $128\times128$ Cartesian mesh with $k{=}1$.
For the sake of comparison, we also include very high-order computations with $k{=}7$ on progressively finer Cartesian grids: 
$16\times16$ for $\Reynolds=\pgfmathprintnumber{1000}$, $32\times32$ for $\Reynolds=\pgfmathprintnumber{5000}$, 
$64\times64$ for $\Reynolds=\pgfmathprintnumber{10000}$, and $128\times 128$ for $\Reynolds=\pgfmathprintnumber{20000}$.
The high-order solution corresponding to $\Reynolds=\pgfmathprintnumber{1000}$ and $\Reynolds=\pgfmathprintnumber{20000}$ are displayed in Figure \ref{fig:cavity:u.magnitude}.
When available, references solutions from the literature \cite{Ghia.Ghia.ea:82,Erturk.Corke.ea:05} are also plotted for the sake of comparison.

We remark that the solid gray and black lines outlining the behavior of 
low-order ($k{=}1$) and high-order ($k{=}7$) velocity approximations, respectively, are perfectly superimposed 
at low Reynolds numbers, while significant differences are visible starting from $\Reynolds=\pgfmathprintnumber{10000}$.
In particular, at $\Reynolds=\pgfmathprintnumber{20000}$, $k{=}1$ computations are in better agreement with reference solutions by Erturk \emph{et al} \cite{Erturk.Corke.ea:05}.
Nevertheless, since high-polynomial degrees over coarse meshes provide accurate results at low Reynolds numbers, 
we are led to think that $k{=}1$ HHO computations are over-dissipative at high Reynolds. 
Indeed, strong velocity gradients observed close to cavity walls and multiple counter-rotating vortices developing at the bottom corners 
are known to be very demanding, both from the stability and the accuracy viewpoints.
Note that the thin jet originating at the top-right corner is contained in exactly one mesh element, both on the $16\times16$ grid for $\Reynolds=\pgfmathprintnumber{1000}$ and 
on the $128\times128$ grid for $\Reynolds=\pgfmathprintnumber{20000}$, see Figure \ref{fig:cavity:u.magnitude}.

The three-dimensional lid-driven cavity flow is computed at $\Reynolds=\pgfmathprintnumber{1000}$, see Figure \ref{fig:cavity:stream3d}.
In Figure \ref{fig:cavity:Re1000_3d} we report $k{=}1,2,4$ HHO computations over $32^3, 16^3, 8^3$ hexahedral element grids of the unit cube, respectively 
(we double both the mesh step size $h$ and the polynomial degree $k$ in order to perform high-order accurate computations at reasonable computational costs).
Comparing the velocity at horizontal and vertical centerlines (passing through the centroid of the unit cube $c=(0.5,0.5,0.5)$) with reference solution from the literature \cite{Albensoeder.Kuhlmann:05},
we demonstrate the ability to accurately reproduce the flow behaviour with coarse meshes and relatively high polynomial degrees ($k{=}4$).
As opposite, the mismatch region observed in a neighborhood of the negative peak of the $u_1$ 
velocity component distribution suggests that both $k{=}1$ and $k{=}2$ HHO computations are over-dissipative.
For the sake of comparison, in Figure \ref{fig:cavity:Re1000_3d_ho}, 
we report higher-order accurate $k{=}4$ and $k{=}8$ HHO computations over $16^3$ and $8^3$ hexahedral grids of the unit cube, respectively.
These latter velocity solutions are in very good agreement with both the reference solutions of \cite{Albensoeder.Kuhlmann:05} and the $k{=}4$ HHO computation over the $8^3$ hexahedral grid.
The three-dimensional lid-driven cavity computations were run on a dual 18 cores Xeon CPU cluster node exploiting a shared-memory, thread-based implementation, 
with both matrix assembly and LU factorization performed in parallel exploiting 36 concurrent compute units. 

In Table \ref{tab:cavity:dspecs} it is possible to evaluate the computational expense
of three-dimensional lid-driven cavity flow computations 
in terms of global system matrix properties and degrees of freedom count.
It is interesting to remark that doubling the mesh step size together with the 
polynomial degree is beneficial, not only from the accuracy viewpoint, see Figure \ref{fig:cavity:Re1000_3d}, 
but also from the computational costs viewpoint. 
Indeed, the global matrix size (equal to the number of unknowns after static condensation), 
the number of non-zero entries of the global statically condensed system matrix,
and the number of element unknowns (recall that only pressure averages survive after static condensation) 
decreases on coarser meshes with higher polynomial degrees.  
Note that the same $h$-coarsening plus $p$-refinement strategy employing discontinuous Galerkin (dG)
instead of HHO would have led to a significant increase of the number of Jacobian matrix non-zeroes entries,
in particular a $k=8$ dG discretization would have topped at 1.5 billion non-zeroes.  
Indeed, when considering a $d$-dimensional flow problem, 
the leading block size of the global sparse matrix grows as 
the size of polynomial spaces in $d{-}1$ and $d$ variables for HHO and dG methods, respectively.
This crucial difference suggests that significant efficiency gains 
might be obtained in the context of implicit time discretizations 
employing high-order HHO discretizations.

\begin{figure}[!htb]
\centering
\includegraphics[width=0.48\textwidth]{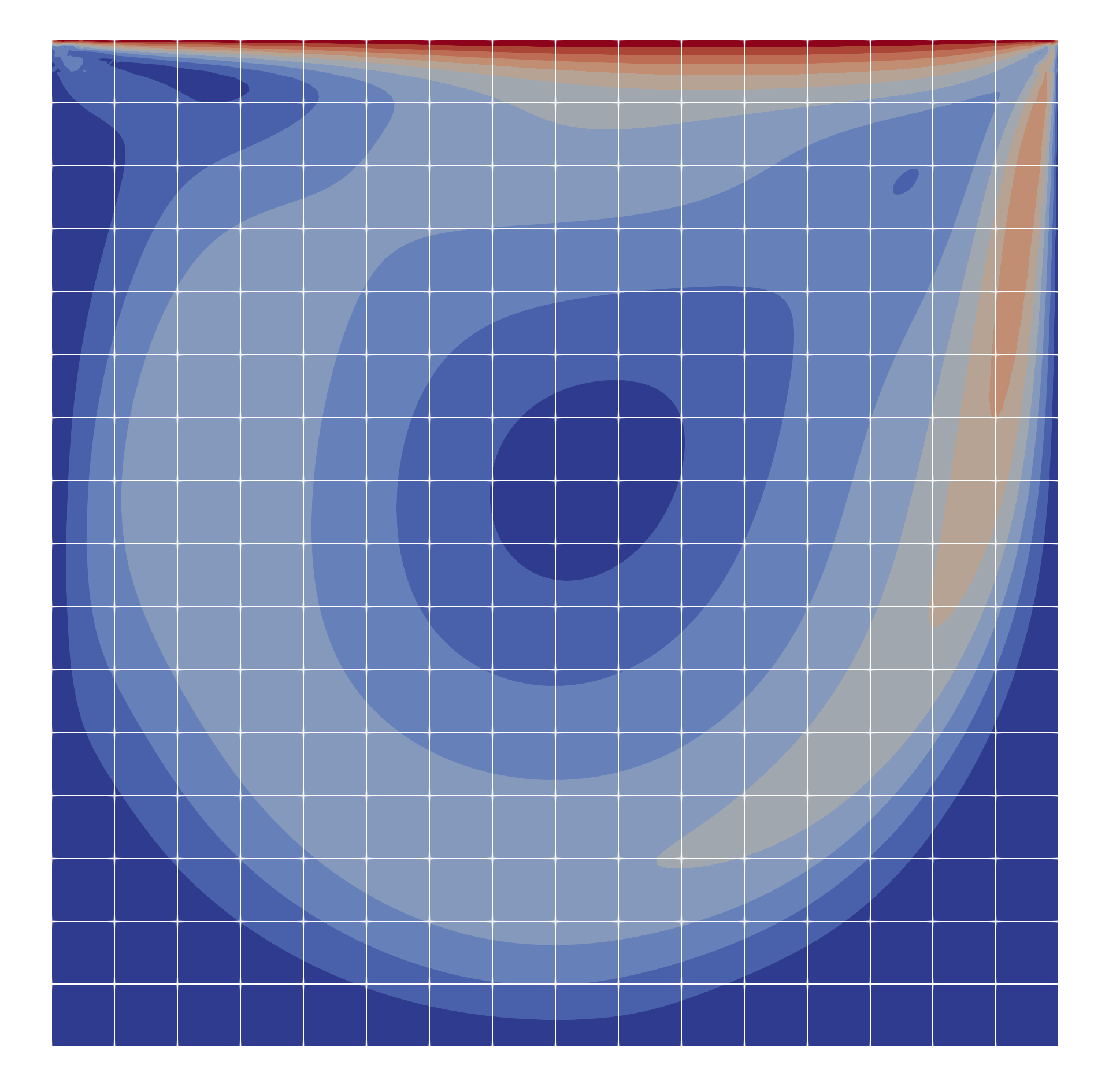} 
\includegraphics[width=0.48\textwidth]{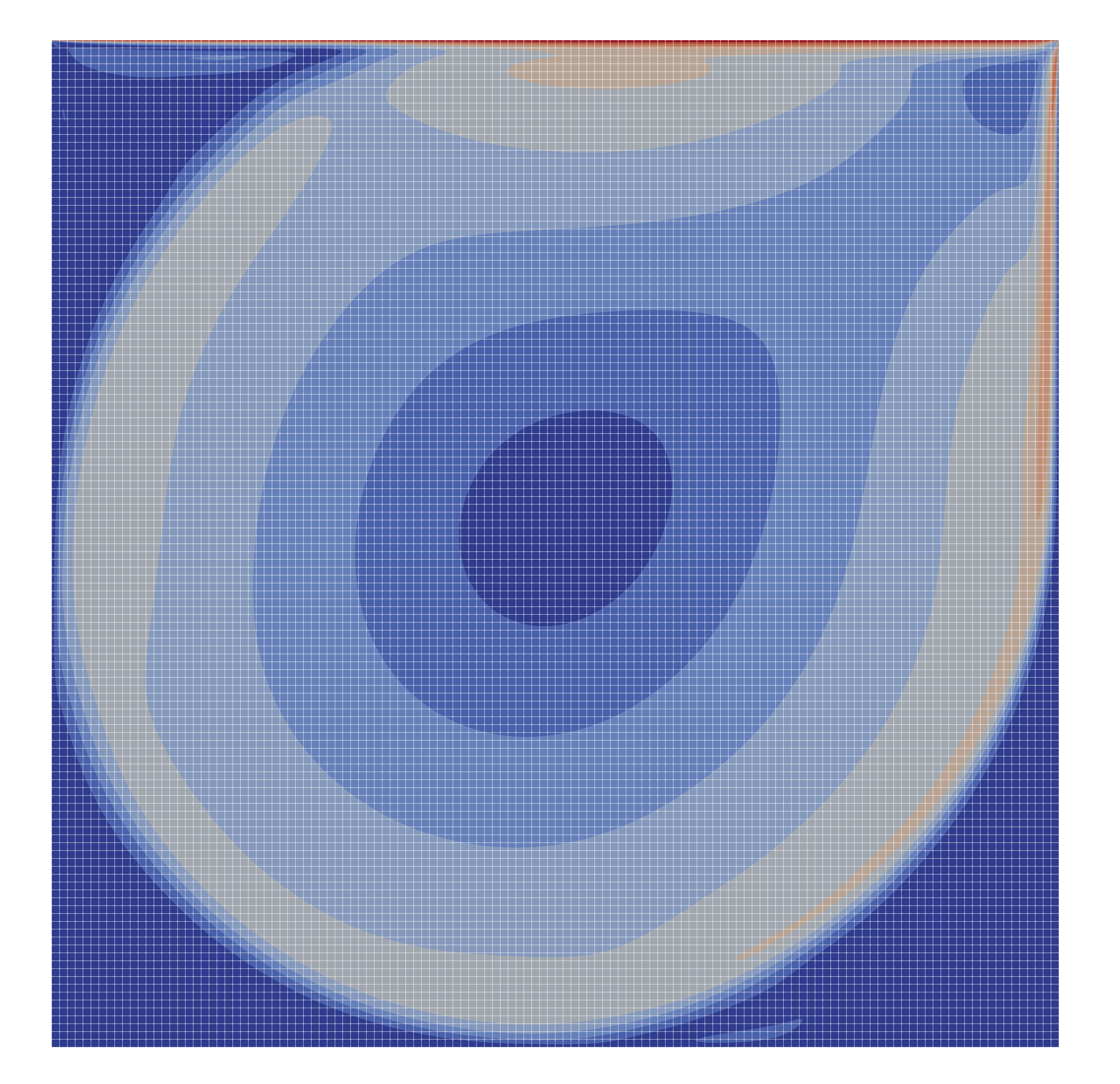} 
\caption{Two-dimensional lid-driven cavity flow, velocity magnitude contours ($10$ equispaced values in the range $[0,1]$) for $k{=}7$ computations at $\Reynolds=\pgfmathprintnumber{1000}$ (\emph{left}: 16x16 grid) and $\Reynolds=\pgfmathprintnumber{20000}$ (\emph{right}: 128x128 grid).\label{fig:cavity:u.magnitude}
}
\end{figure}

\begin{figure}[!htb]
\centering
\includegraphics[width=0.48\textwidth]{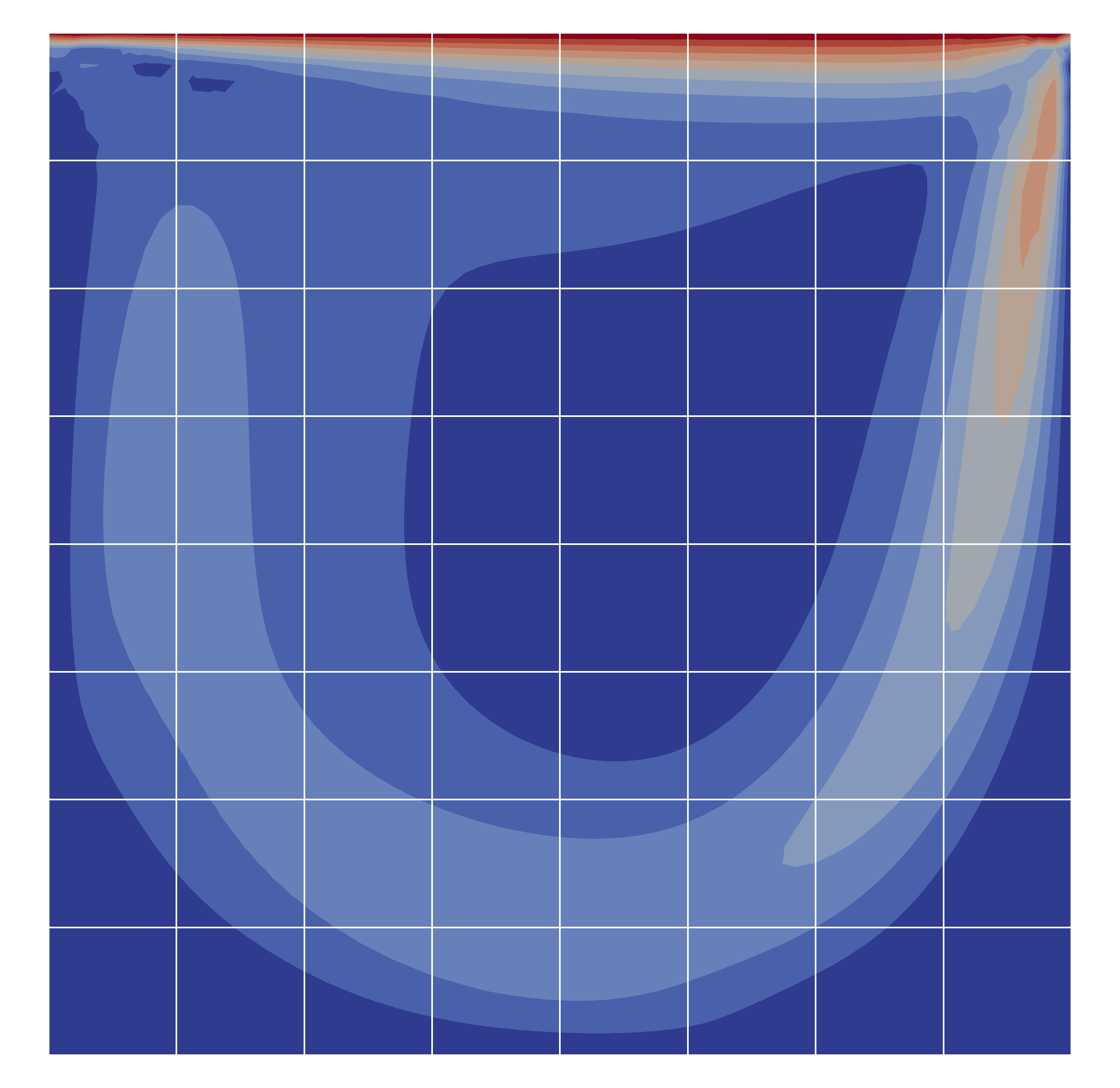} 
\includegraphics[width=0.48\textwidth]{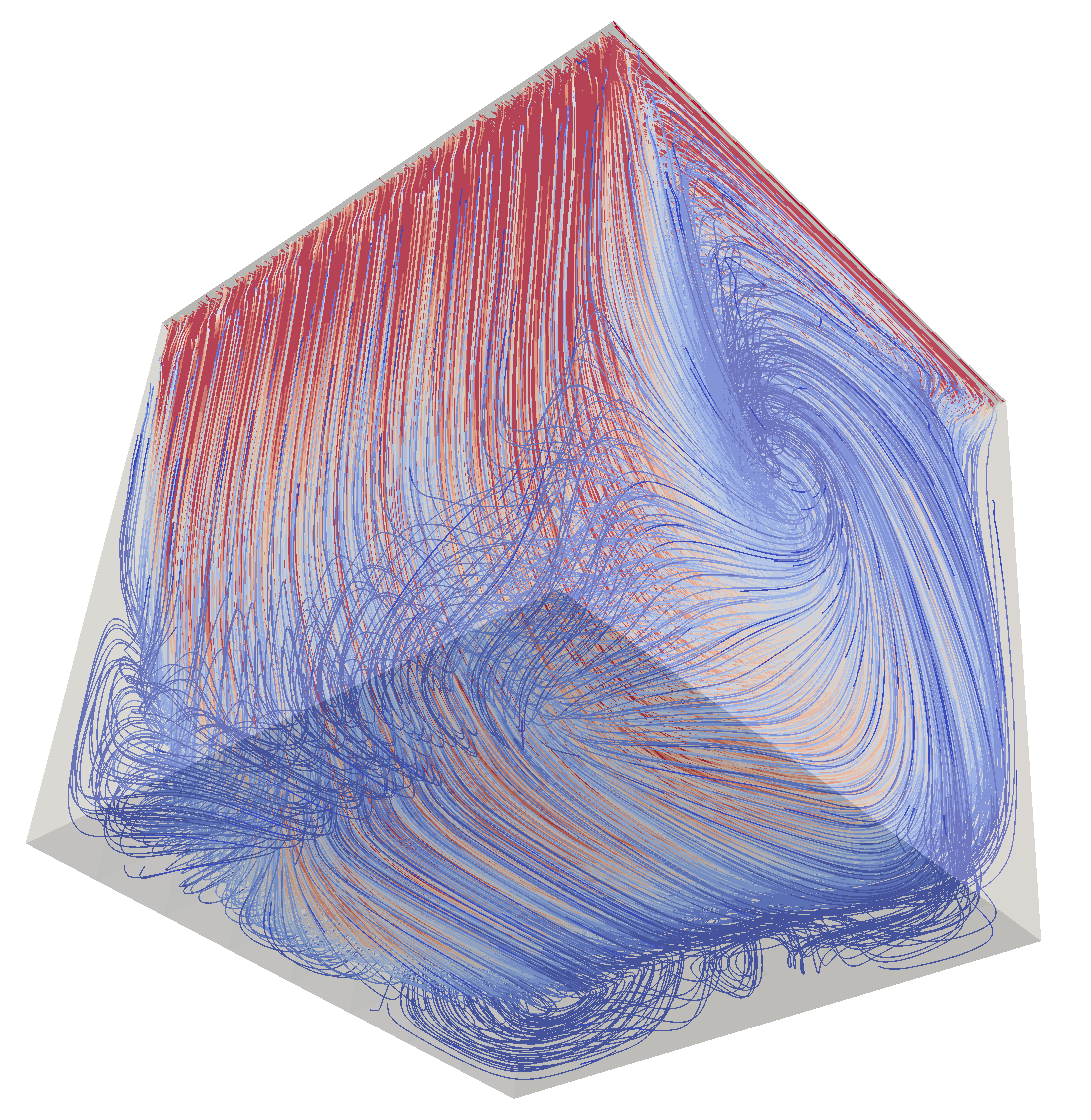} 
\caption{Three-dimensional lid-driven cavity flow at $\Reynolds=\pgfmathprintnumber{1000}$ computed by means of a $k{=}8$, $8^3$ hexahedral elements grid HHO discretization. \emph{Left,} velocity magnitude contours ($10$ equispaced values in the range $[0,1]$) over the vertical mid-plane whose top edge is aligned with the velocity vector. \emph{Right}: Streamlines color-coded by velocity magnitude. \label{fig:cavity:stream3d}
}
\end{figure}


\begin{figure}\centering
  \begin{tikzpicture}[font=\footnotesize, %
      spy using outlines={magnification=4, size=3cm, connect spies, fill=none} %
    ]
    \begin{axis}[height=9cm, width=9cm, %
        xmin=-1, xmax=1, ymin=0, ymax=1, %
        xlabel={$u_1$}, ylabel={$x_2$}, %
        legend style = { at={(0,1)}, anchor=north west, draw=none, fill=none }, %
        axis x line=top, ytick pos=bottom]
      \addplot[mark=o, only marks, mark size=1.4, color=black!50] table [x=Re1000, y=y] {ghia-ghia-shin-ux-vertical-centerline.txt};
      \addplot[mark=+, only marks, color=black!75] table [x=Re1000, y=y] {erturk-corke-gokcol-ux-vertical-centerline.txt};
      \addplot[mark=none, thick, color=black!50] table [x="a", y="Points:1", col sep=comma] {re1000-2d-p1-128-strong-bc-ux-vertical-centerline.txt};
      \addplot[mark=none, thick, color=black] table [x="a", y="Points:1", col sep=comma] {re1000-2d-p7-strong-bc-ux-vertical-centerline.txt};
      \legend{Ghia et al.,Erturk et al.,{$k{=}1$, $128\times128$, strong bc},{$k{=}7$, $16\times 16$, strong bc}}
    \end{axis}
    \begin{axis}[height=9cm, width=9cm, %
        xmin=0, xmax=1, ymin=-1, ymax=1,%
        xlabel={$x_1$}, ylabel={$u_2$}, %
        axis y line=right, xtick pos=left]
      \addplot[mark=o, only marks, mark size=1.4, color=black!50] table [x=x, y=Re1000] {ghia-ghia-shin-uy-horizontal-centerline.txt};
      \addplot[mark=+, only marks, color=black!75] table [x=x, y=Re1000] {erturk-corke-gokcol-uy-horizontal-centerline.txt};      
      \addplot[mark=none, thick, color=black!50] table [x="Points:0", y="b", col sep=comma] {re1000-2d-p1-128-strong-bc-uy-horizontal-centerline.txt};
      \addplot[mark=none, thick, color=black] table [x="Points:0", y="b", col sep=comma] {re1000-2d-p7-strong-bc-uy-horizontal-centerline.txt};
    \end{axis}
    \begin{scope}
      \spy on (0.85,5.00) in node [fill=none, anchor=south east] at (-0.75,4.25);
      \spy on (5.50,7.00) in node [fill=none, anchor=south west] at (8.35,4.25); 
      \spy on (6.70,2.05) in node [fill=none, anchor=south west] at (8.35,0.25);
      \spy on (2.30,1.15) in node [fill=none, anchor=south east] at (-0.75,0.25);
    \end{scope}
  \end{tikzpicture}
  \caption{2D Lid-driven cavity flow, horizontal component $u_1$ of the velocity along the vertical centerline $x_1=\frac12$ and the vertical component $u_2$ of the velocity along the horizontal centerline $x_2=\frac12$ for $\Reynolds=\pgfmathprintnumber{1000}$\label{fig:cavity:Re1000}}
\end{figure}


\begin{figure}\centering
  \begin{tikzpicture}[font=\footnotesize, %
      spy using outlines={magnification=4, size=3cm, connect spies, fill=none} %
    ]
    \begin{axis}[height=9cm, width=9cm, %
        xmin=-1, xmax=1, ymin=0, ymax=1, %
        xlabel={$u_1$}, ylabel={$x_2$}, %
        legend style = { at={(0,1)}, anchor=north west, draw=none, fill=none }, %
        axis x line=top, ytick pos=bottom]
      \addplot[mark=o, only marks, mark size=1.4, color=black!50] table [x=Re5000, y=y] {ghia-ghia-shin-ux-vertical-centerline.txt};
      \addplot[mark=+, only marks, color=black!75] table [x=Re5000, y=y] {erturk-corke-gokcol-ux-vertical-centerline.txt};
      \addplot[mark=none, thick, color=black!50] table [x="a", y="Points:1", col sep=comma] {re5000-2d-p1-128-strong-bc-ux-vertical-centerline.txt};
      \addplot[mark=none, thick, color=black] table [x="a", y="Points:1", col sep=comma] {re5000-2d-p7-strong-bc-ux-vertical-centerline.txt};
      \legend{Ghia et al.,Erturk et al.,{$k{=}1$, $128\times128$, strong bc},{$k{=}7$, $32\times 32$, strong bc}}
    \end{axis}
    \begin{axis}[height=9cm, width=9cm, %
        xmin=0, xmax=1, ymin=-1, ymax=1,%
        xlabel={$x_1$}, ylabel={$u_2$}, %
        axis y line=right, xtick pos=left]
      \addplot[mark=o, only marks, mark size=1.4, color=black!50] table [x=x, y=Re5000] {ghia-ghia-shin-uy-horizontal-centerline.txt};
      \addplot[mark=+, only marks, color=black!75] table [x=x, y=Re5000] {erturk-corke-gokcol-uy-horizontal-centerline.txt};      
      \addplot[mark=none, thick, color=black!50] table [x="Points:0", y="b", col sep=comma] {re5000-2d-p1-128-strong-bc-uy-horizontal-centerline.txt};
      \addplot[mark=none, thick, color=black] table [x="Points:0", y="b", col sep=comma] {re5000-2d-p7-strong-bc-uy-horizontal-centerline.txt};
    \end{axis}
    \begin{scope}
      \spy on (0.50,5.15) in node [fill=none, anchor=south east] at (-0.75,4.25);  
      \spy on (5.50,7.00) in node [fill=none, anchor=south west] at (8.35,4.25); 
      \spy on (7.00,1.85) in node [fill=none, anchor=south west] at (8.35,0.25);
      \spy on (2.25,0.70) in node [fill=none, anchor=south east] at (-0.75,0.25);
    \end{scope}
  \end{tikzpicture}
  \caption{2D Lid-driven cavity flow, horizontal component $u_1$ of the velocity along the vertical centerline $x_1=\frac12$ and the vertical component $u_2$ of the velocity along the horizontal centerline $x_2=\frac12$ for $\Reynolds=\pgfmathprintnumber{5000}$\label{fig:cavity:Re5000}}
\end{figure}


\begin{figure}\centering
  \begin{tikzpicture}[font=\footnotesize, %
      spy using outlines={magnification=4, size=3cm, connect spies, fill=none} %
    ]
    \begin{axis}[height=9cm, width=9cm, %
        xmin=-1, xmax=1, ymin=0, ymax=1, %
        xlabel={$u_1$}, ylabel={$x_2$}, %
        legend style = { at={(0,1)}, anchor=north west, draw=none, fill=none }, %
        axis x line=top, ytick pos=bottom]
      \addplot[mark=o, only marks, mark size=1.4, color=black!50] table [x=Re10000, y=y] {ghia-ghia-shin-ux-vertical-centerline.txt};
      \addplot[mark=+, only marks, color=black!75] table [x=Re10000, y=y] {erturk-corke-gokcol-ux-vertical-centerline.txt};
      \addplot[mark=none, thick, color=black!50] table [x="a", y="Points:1", col sep=comma] {re10000-2d-p1-128-strong-bc-ux-vertical-centerline.txt};
      \addplot[mark=none, thick, color=black] table [x="a", y="Points:1", col sep=comma] {re10000-2d-p7-strong-bc-ux-vertical-centerline.txt};
      \legend{Ghia et al.,Erturk et al.,{$k{=}1$, $128\times128$, strong bc},{$k{=}7$, $64\times 64$, strong bc}}
    \end{axis}
    \begin{axis}[height=9cm, width=9cm, %
        xmin=0, xmax=1, ymin=-1, ymax=1,%
        xlabel={$x_1$}, ylabel={$u_2$}, %
        axis y line=right, xtick pos=left]
      \addplot[mark=o, only marks, mark size=1.4, color=black!50] table [x=x, y=Re10000] {ghia-ghia-shin-uy-horizontal-centerline.txt};
      \addplot[mark=+, only marks, color=black!75] table [x=x, y=Re10000] {erturk-corke-gokcol-uy-horizontal-centerline.txt};      
      \addplot[mark=none, thick, color=black!50] table [x="Points:0", y="b", col sep=comma] {re10000-2d-p1-128-strong-bc-uy-horizontal-centerline.txt};
      \addplot[mark=none, thick, color=black] table [x="Points:0", y="b", col sep=comma] {re10000-2d-p7-strong-bc-uy-horizontal-centerline.txt};
    \end{axis}
    \begin{scope}
      \spy on (0.50,5.25) in node [fill=none, anchor=south east] at (-0.75,4.25);
      \spy on (5.50,7.00) in node [fill=none, anchor=south west] at (8.35,4.25);  
      \spy on (7.00,1.85) in node [fill=none, anchor=south west] at (8.35,0.25);
      \spy on (2.25,0.50) in node [fill=none, anchor=south east] at (-0.75,0.25);
    \end{scope}
  \end{tikzpicture}
  \caption{2D Lid-driven cavity flow, horizontal component $u_1$ of the velocity along the vertical centerline $x_1=\frac12$ and the vertical component $u_2$ of the velocity along the horizontal centerline $x_2=\frac12$ for $\Reynolds=\pgfmathprintnumber{10000}$\label{fig:cavity:Re10000}}
\end{figure}


\begin{figure}\centering
  \begin{tikzpicture}[font=\footnotesize, %
      spy using outlines={magnification=4, size=3cm, connect spies, fill=none} %
    ]
    \begin{axis}[height=9cm, width=9cm, %
        xmin=-1, xmax=1, ymin=0, ymax=1, %
        xlabel={$u_1$}, ylabel={$x_2$}, %
        legend style = { at={(0,1)}, anchor=north west, draw=none, fill=none }, %
        axis x line=top, ytick pos=bottom]
      \addplot[mark=+, only marks, color=black!75] table [x=Re20000, y=y] {erturk-corke-gokcol-ux-vertical-centerline.txt};
      \addplot[mark=none, thick, color=black!50] table [x="a", y="Points:1", col sep=comma] {re20000-2d-p1-128-strong-bc-ux-vertical-centerline.txt};
      \addplot[mark=none, thick, color=black] table [x="a", y="Points:1", col sep=comma] {re20000-2d-p7-strong-bc-ux-vertical-centerline.txt};
      \legend{Erturk et al.,{$k{=}1$, $128\times128$, strong bc},{$k{=}7$, $128\times 128$, strong bc}}
    \end{axis}
    \begin{axis}[height=9cm, width=9cm, %
        xmin=0, xmax=1, ymin=-1, ymax=1,%
        xlabel={$x_1$}, ylabel={$u_2$}, %
        axis y line=right, xtick pos=left]
      \addplot[mark=+, only marks, color=black!75] table [x=x, y=Re20000] {erturk-corke-gokcol-uy-horizontal-centerline.txt};      
      \addplot[mark=none, thick, color=black!50] table [x="Points:0", y="b", col sep=comma] {re20000-2d-p1-128-strong-bc-uy-horizontal-centerline.txt};
      \addplot[mark=none, thick, color=black] table [x="Points:0", y="b", col sep=comma] {re20000-2d-p7-strong-bc-uy-horizontal-centerline.txt};
    \end{axis}
    \begin{scope}
      \spy on (0.50,5.25) in node [fill=none, anchor=south east] at (-0.75,4.25);
      \spy on (5.50,7.00) in node [fill=none, anchor=south west] at (8.35,4.25); 
      \spy on (7.00,1.85) in node [fill=none, anchor=south west] at (8.35,0.25);
      \spy on (2.25,0.50) in node [fill=none, anchor=south east] at (-0.75,0.25);
    \end{scope}
  \end{tikzpicture}
  \caption{2D Lid-driven cavity flow, horizontal component $u_1$ of the velocity along the vertical centerline $x_1=\frac12$ and the vertical component $u_2$ of the velocity along the horizontal centerline $x_2=\frac12$ for $\Reynolds=\pgfmathprintnumber{20000}$\label{fig:cavity:Re20000}}
\end{figure}


\begin{figure}\centering
  \begin{tikzpicture}[font=\footnotesize, %
      spy using outlines={magnification=3, size=3cm, connect spies, fill=none} %
    ]
    \begin{axis}[height=9cm, width=9cm, %
        xmin=-1, xmax=1, ymin=0, ymax=1, %
        xlabel={$u_1$}, ylabel={$x_2$}, %
        legend style = { at={(0,1)}, anchor=north west, draw=none, fill=none }, %
        axis x line=top, ytick pos=bottom]
      \addplot[mark=+, only marks, color=black!75] table [x=Re1000, y=y] {albensoeder-kuhlmann-ux-vertical-centerline.txt};
      \addplot[mark=none, thick, color=red] table [x="a", y="Points:1", col sep=comma] {re1000-3d-p1-32-strong-bc-ux-vertical-centerline.txt};
      \addplot[mark=none, thick, color=blue] table [x="a", y="Points:1", col sep=comma] {re1000-3d-p2-16-strong-bc-ux-vertical-centerline.txt};
      \addplot[mark=none, thick, color=black] table [x="a", y="Points:1", col sep=comma] {re1000-3d-p4-8-strong-bc-ux-vertical-centerline.txt};
      \legend{Albensoeder et al.,{$k{=}1$, $32^3$, strong bc},{$k{=}2$, $16^3$, strong bc},{$k{=}4$, $8^3$, strong bc}}
    \end{axis}
    \begin{axis}[height=9cm, width=9cm, %
        xmin=0, xmax=1, ymin=-1, ymax=1,%
        xlabel={$x_1$}, ylabel={$u_2$}, %
        axis y line=right, xtick pos=left]
      \addplot[mark=+, only marks, color=black!75] table [x=x, y=Re1000] {albensoeder-kuhlmann-uy-horizontal-centerline.txt};      
      \addplot[mark=none, thick, color=red] table [x="Points:0", y="b", col sep=comma] {re1000-3d-p1-32-strong-bc-uy-horizontal-centerline.txt};
      \addplot[mark=none, thick, color=blue] table [x="Points:0", y="b", col sep=comma] {re1000-3d-p2-16-strong-bc-uy-horizontal-centerline.txt};
      \addplot[mark=none, thick, color=black] table [x="Points:0", y="b", col sep=comma] {re1000-3d-p4-8-strong-bc-uy-horizontal-centerline.txt};
    \end{axis}
    \begin{scope}
      \spy on (0.85,4.50) in node [fill=none, anchor=south east] at (-0.75,4.25);
      \spy on (4.50,6.70) in node [fill=none, anchor=south west] at (8.35,4.25); 
      \spy on (6.70,2.45) in node [fill=none, anchor=south west] at (8.35,0.25);
      \spy on (2.70,0.85) in node [fill=none, anchor=south east] at (-0.75,0.25);
    \end{scope}
  \end{tikzpicture}
  \caption{3D Lid-driven cavity flow, horizontal component $u_1$ of the velocity along the vertical centerline $x_1,x_3=\frac12$ and the vertical component $u_2$ of the velocity along the horizontal centerline $x_2,x_3=\frac12$ for $\Reynolds=\pgfmathprintnumber{1000}$\label{fig:cavity:Re1000_3d}}
\end{figure}


\begin{figure}\centering
  \begin{tikzpicture}[font=\footnotesize, %
      spy using outlines={magnification=3, size=3cm, connect spies, fill=none} %
    ]
    \begin{axis}[height=9cm, width=9cm, %
        xmin=-1, xmax=1, ymin=0, ymax=1, %
        xlabel={$u_1$}, ylabel={$x_2$}, %
        legend style = { at={(0,1)}, anchor=north west, draw=none, fill=none }, %
        axis x line=top, ytick pos=bottom]
      \addplot[mark=+, only marks, color=black!75] table [x=Re1000, y=y] {albensoeder-kuhlmann-ux-vertical-centerline.txt};
      \addplot[mark=none, thick, color=blue] table [x="a", y="Points:1", col sep=comma] {re1000-3d-p4-16-strong-bc-ux-vertical-centerline.txt};
      \addplot[mark=none, thick, color=black] table [x="a", y="Points:1", col sep=comma] {re1000-3d-p8-8-strong-bc-ux-vertical-centerline.txt};
      \legend{Albensoeder et al.,{$k{=}4$, $16^3$, strong bc},{$k{=}8$, $8^3$, strong bc}}
    \end{axis}
    \begin{axis}[height=9cm, width=9cm, %
        xmin=0, xmax=1, ymin=-1, ymax=1,%
        xlabel={$x_1$}, ylabel={$u_2$}, %
        axis y line=right, xtick pos=left]
      \addplot[mark=+, only marks, color=black!75] table [x=x, y=Re1000] {albensoeder-kuhlmann-uy-horizontal-centerline.txt};      
      \addplot[mark=none, thick, color=blue] table [x="Points:0", y="b", col sep=comma] {re1000-3d-p4-16-strong-bc-uy-horizontal-centerline.txt};
      \addplot[mark=none, thick, color=black] table [x="Points:0", y="b", col sep=comma] {re1000-3d-p8-8-strong-bc-uy-horizontal-centerline.txt};
    \end{axis}
    \begin{scope}
      \spy on (0.85,4.50) in node [fill=none, anchor=south east] at (-0.75,4.25);
      \spy on (4.50,6.70) in node [fill=none, anchor=south west] at (8.35,4.25); 
      \spy on (6.70,2.45) in node [fill=none, anchor=south west] at (8.35,0.25);
      \spy on (2.70,0.85) in node [fill=none, anchor=south east] at (-0.75,0.25);
    \end{scope}
  \end{tikzpicture}
  \caption{3D Lid-driven cavity flow, horizontal component $u_1$ of the velocity along the vertical centerline $x_1,x_3=\frac12$ and the vertical component $u_2$ of the velocity along the horizontal centerline $x_2,x_3=\frac12$ for $\Reynolds=\pgfmathprintnumber{1000}$\label{fig:cavity:Re1000_3d_ho}}
\end{figure}

\begin{table}
\centering
\begin{tabular}{cc|ccc||cc|ccc}
  \toprule
  \multicolumn{5}{c||}{Figure \ref{fig:cavity:Re1000_3d} HHO discretizations specs} & \multicolumn{5}{c}{Figure \ref{fig:cavity:Re1000_3d_ho} HHO discretizations specs} \\
  \midrule
  degree & grid       & $N_{\rm dof}$ & $N_{\rm nz}$ & $N_{\rm edof}$ &  
  degree & grid       & $N_{\rm dof}$ & $N_{\rm nz}$ & $N_{\rm edof}$ \\ 
  $k{=}1$  & $32^3$   & 890k  & 86M   & 524k  &  
  ($k{=}2$)& ($32^3$) & (1747k) & (343M)  & (1310k) \\ 
  $k{=}2$  & $16^3$   & 211k  & 39M   & 164k  &  
  $k{=}4$  & $16^3$   & 522k  & 244M  & 573k  \\ 
  $k{=}4$  & $8^3$    & 61k   & 27M   & 72k   &  
  $k{=}8$  & $8^3$    & 182k  & 239M  & 338k  \\ 
  \bottomrule
\end{tabular}
\caption{Three-dimensional lid-driven cavity flow. For each HHO discretization of Figure \ref{fig:cavity:Re1000_3d} and \ref{fig:cavity:Re1000_3d_ho}, we report the size of the statically condensed global system matrix ($N_{\rm dof}$), its number of non-zero entries ($N_{\rm nz}$) and the number of elemental degrees of freedom ($N_{\rm edof} = (d+1) \, \dim(\Poly{k}(\Th))$). The $k=2$, $32^3$ grid HHO discretization is included for the sake of comparison. \label{tab:cavity:dspecs}} 
\end{table}


\section{Flux formulation}\label{sec:flux}

We show here that the discrete problem \eqref{eq:discrete.strong.bc} with strongly enforced boundary conditions admits a reformulation in terms of conservative numerical fluxes.
\begin{proposition}[Flux formulation]\label{prop:flux}
  Denote by $(\uvec{u}_h,p_h)\in\Uhz\times\Ph$ the unique solution to \eqref{eq:discrete.strong.bc} and, for all $T\in\Th$ and all $F\in\Fh[T]$, define the numerical normal traces of the viscous and convective momentum fluxes as follows:
  $$
  \begin{aligned}
    \flux_{TF}^\visc(\uvec{u}_T)
    &\coloneq
    \nu\left(-\GRAD\rT\uvec{u}_T\normal_{TF} + \vec{R}_{TF}^k\uvec{u}_T\right),
    \\
    \flux_{TF}^\conv(\uvec{u}_T)&\coloneq
    \vlproj[F]{k}\left[%
      \frac{\vec{u}_F\SCAL\normal_{TF}}{2}(\vec{u}_F+\vec{u}_T)%
      - \frac{\nu}{h_F}\rho(\Pe(\vec{u}_F))(\vec{u}_F-\vec{u}_T)%
      \right],
  \end{aligned}
  $$
  where, letting $\uvec{D}_{\partial T}^k\coloneq\left\{\uvec{\alpha}_{\partial T}\coloneq(\vec{\alpha}_F)_{F\in\Fh[T]}\st\vec{v}_F\in\Poly{k}(F)^d\quad\forall F\in\Fh[T]\right\}$, the boundary residual operator $\underline{\vec{R}}_{\partial T}^k\coloneq(\vec{R}_{TF}^k)_{F\in\Fh[T]}:\UT\to\uvec{D}_{\partial T}^k$ is such that, for all $\uvec{v}_T\in\UT$ and all $\uvec{\alpha}_{\partial T}\in \uvec{D}_{\partial T}^k$,
  $$
  -\sum_{T\in\Th}(\vec{R}_{TF}^k\uvec{v}_T,\vec{\alpha}_F)_F
  = \mathrm{s}_T(\uvec{v}_T,(\vec{0},\uvec{\alpha}_{\partial T})).
  $$
  Then, for all $T\in\Th$, it holds:
  For any $(\vec{v}_T,q_T)\in\Poly{k}(T)^d\times\Poly{k}(T)$,%
  \begin{subequations}%
    \begin{align}
      \nonumber\int_T\nu\GRAD\rT\uvec{u}_T\SSCAL\GRAD\vec{v}_T
      - \int_T\vec{u}_T\SCAL(\vec{u}_T\SCAL\GRAD)\vec{v}_T
      - \frac12\int_T\DT[2k]\uvec{u}_T(\vec{u}_T\SCAL\vec{v}_T)
      - \int_T p_T(\DIV\vec{v}_T)      
      \hspace{-2em}&
      \\ \label{eq:discrete.strong.bc:momentum.balance}
      + \sum_{F\in\Fh[T]}\int_F\left(\flux_{TF}^\visc(\uvec{u}_T)+\flux_{TF}^\conv(\uvec{u}_T)+p_T\normal_{TF}\right)\SCAL\vec{v}_T
      &= \int_T\vec{f}\SCAL\vec{v}_T,
      \\ \label{eq:discrete.strong.bc:mass.balance}
      \int_T\vec{u}_T\SCAL\GRAD q_T
      - \sum_{F\in\Fh[T]}\int_F(\vec{u}_F\SCAL\normal_{TF})q_T
      &= 0.      
    \end{align}
  \end{subequations}
  Moreover, the numerical normal trace of the global momentum and mass fluxes are conservative, i.e., for any interface $F\in\Fhi$ such that $F\in\Fh[T_1]\cap\Fh[T_2]$ for distinct mesh elements $T_1,T_2\in\Th$,
  \begin{subequations}
    \begin{align}
      \label{eq:discrete.strong.bc:momentum.flux.continuity}
      \left(
      \flux_{T_1F}^\visc(\uvec{u}_{T_1}) + \flux_{T_1F}^\conv(\uvec{u}_{T_1}) + p_{T_1}\normal_{T_1F}
      \right)
      + \left(
      \flux_{T_2F}^\visc(\uvec{u}_{T_2}) + \flux_{T_2F}^\conv(\uvec{u}_{T_2}) + p_{T_2}\normal_{T_2F}
      \right)&=\vec{0},
      \\ \label{eq:discrete.strong.bc:mass.flux.continuity}
      \vec{u}_F\SCAL\normal_{T_1F} + \vec{u}_F\SCAL\normal_{T_2F} &= 0.
    \end{align}
  \end{subequations}
\end{proposition}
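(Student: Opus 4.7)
The plan is to extract the local balances and flux continuities by testing the scheme \eqref{eq:discrete.strong.bc} against two classes of hybrid test functions. For \eqref{eq:discrete.strong.bc:momentum.balance} and \eqref{eq:discrete.strong.bc:mass.balance}, I will take $\uvec{v}_h\in\Uhz$ or $q_h\in\Poly{k}(\Th)$ supported by a single element $T$, with an arbitrary $\vec{v}_T\in\Poly{k}(T)^d$ or $q_T\in\Poly{k}(T)$ and every other unknown zero; this is admissible (testing the mass equation against $q_h\in\Poly{k}(\Th)$ is justified by Remark~\ref{rem:mass.eq}) and, as noted in that remark, all $\sum_{F\in\Fhb}$ contributions automatically drop because $\vec{u}_F=\vec{v}_F=\vec{0}$ on $\Fhb$ for elements of $\Uhz$. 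For the momentum flux continuity \eqref{eq:discrete.strong.bc:momentum.flux.continuity}, I will test with $\uvec{v}_h\in\Uhz$ supported by a single face unknown $\vec{v}_F\in\Poly{k}(F)^d$ on an interior face $F=\partial T_1\cap\partial T_2$, so that only the face-localised pieces from $T_1$ and $T_2$ survive. The mass flux continuity \eqref{eq:discrete.strong.bc:mass.flux.continuity} then follows immediately from the single-valuedness of $\vec{u}_F$ on $F$ together with $\normal_{T_1F}+\normal_{T_2F}=\vec{0}$.

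For \eqref{eq:discrete.strong.bc:momentum.balance}, the four discrete forms are processed one by one with the test $\uvec{v}_h=((\vec{v}_T,\vec{0}),(\vec{0})_{T'\ne T})$. In $\mathrm{a}_h$, the Galerkin term is recast as $\int_T\GT(\vec{v}_T,\vec{0})\SSCAL\GRAD\rT\uvec{u}_T$ via the orthogonality defining $\rT$, and expanded through \eqref{eq:GT} with $\matr{\tau}=\GRAD\rT\uvec{u}_T$ to produce $\int_T\GRAD\rT\uvec{u}_T\SSCAL\GRAD\vec{v}_T$ plus the flux $-\nu\GRAD\rT\uvec{u}_T\normal_{TF}$. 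The stabilisation is handled by the key identity $\rT\IT\vec{v}_T=\vec{v}_T$ for any $\vec{v}_T\in\Poly{k}(T)^d$ (consequence of \eqref{eq:characterisation.rT} and the mean-value condition defining $\rT$), which gives $\mathrm{s}_T(\uvec{u}_T,\IT\vec{v}_T)=0$; decomposing $(\vec{v}_T,\vec{0})=\IT\vec{v}_T-(\vec{0},((\vec{v}_T)_{|F})_{F\in\Fh[T]})$ and invoking the defining equation of $\vec{R}_{TF}^k$ then produces exactly the $\nu\vec{R}_{TF}^k\uvec{u}_T$ contribution to $\flux_{TF}^\visc$. For the convective trilinear form, I expand $\int_T\GwT[\uvec{u}_T]{\uvec{u}_T}\SCAL\vec{v}_T$ via \eqref{eq:GwT}, integrate by parts the smooth part $\int_T(\vec{u}_T\SCAL\GRAD)\vec{u}_T\SCAL\vec{v}_T$, and use \eqref{eq:DT} with $\ell=2k$ and $q=\vec{u}_T\SCAL\vec{v}_T$ to trade $\DIV\vec{u}_T$ for $\DT[2k]\uvec{u}_T$; after combining with the two Temam corrections in \eqref{eq:tT}, the boundary contributions telescope down to exactly $\frac{1}{2}(\vec{u}_F\SCAL\normal_{TF})(\vec{u}_F+\vec{u}_T)\SCAL\vec{v}_T$. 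The convective stabilisation directly delivers the remaining $-\frac{\nu}{h_F}\rho(\Pe(\vec{u}_F))(\vec{u}_F-\vec{u}_T)$ piece, and the projector $\vlproj[F]{k}$ appearing in $\flux_{TF}^\conv$ is invisible under the integral because $(\vec{v}_T)_{|F}\in\Poly{k}(F)^d$. Finally, applying \eqref{eq:DT} with $\ell=k$ and $q=p_T$ inside $\mathrm{b}_h(\uvec{v}_h,p_h)$ produces the volume term $-\int_T p_T(\DIV\vec{v}_T)$ together with the pressure flux $p_T\normal_{TF}$.

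The local mass balance \eqref{eq:discrete.strong.bc:mass.balance} is then obtained by testing \eqref{eq:discrete.strong.bc:mass} with the element-supported $q_h$ to get $\int_T\DT\uvec{u}_T\,q_T=0$ (boundary faces drop because $\vec{u}_F=\vec{0}$ there), applying \eqref{eq:DT} to expand $\DT\uvec{u}_T$, and integrating by parts the resulting $\DIV\vec{u}_T$ contribution. The momentum flux continuity \eqref{eq:discrete.strong.bc:momentum.flux.continuity} is proved by the exact same form-by-form expansion as above, this time applied to the face-supported test function: summing the contributions from $T_1$ and $T_2$ yields a face integral of $\vec{v}_F$ against $\flux_{T_1F}^\visc+\flux_{T_2F}^\visc+\flux_{T_1F}^\conv+\flux_{T_2F}^\conv+p_{T_1}\normal_{T_1F}+p_{T_2}\normal_{T_2F}$, and since $\vec{v}_F\in\Poly{k}(F)^d$ is arbitrary and each flux quantity lies in $\Poly{k}(F)^d$, this sum of fluxes must vanish. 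I expect the main obstacle to be the convective bookkeeping: one has to carefully track the several boundary contributions coming from \eqref{eq:GwT}, from the integration by parts of $(\vec{u}_T\SCAL\GRAD)\vec{u}_T\SCAL\vec{v}_T$, from \eqref{eq:DT} applied to $\DT[2k]$, and from the two Temam corrections in \eqref{eq:tT}, and verify that they telescope precisely to the symmetric flux $\frac{1}{2}(\vec{u}_F\SCAL\normal_{TF})(\vec{u}_F+\vec{u}_T)$. The algebraic identity $\frac{1}{2}(\vec{u}_F\SCAL\normal_{TF})(\vec{u}_F+\vec{u}_T)=(\vec{u}_F\SCAL\normal_{TF})\vec{u}_F-\frac{1}{2}(\vec{u}_F\SCAL\normal_{TF})(\vec{u}_F-\vec{u}_T)$ will be the key both to recognising the flux shape and to observing that, in the flux continuity proof, the bulk part $(\vec{u}_F\SCAL\normal_{TF})\vec{u}_F$ carries opposite signs from $T_1$ and $T_2$ and cancels out.
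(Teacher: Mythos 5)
Your plan is correct and follows the same overall architecture as the paper's proof: rewrite each discrete form in ``volume term plus flux tested against $(\vec{v}_F-\vec{v}_T)$'' shape, then specialise to element-supported test functions for the local balances and to single-face-supported test functions for the flux continuity, with the mass statements handled exactly as in the paper via Remark~\ref{rem:mass.eq}, \eqref{eq:DT} and single-valuedness. Where you differ is in how two of the intermediate reformulations are obtained. For the viscous part, the paper imports the identity \eqref{eq:flux:ah} wholesale from \cite[Lemma 3.3]{Di-Pietro.Tittarelli:18}, whereas you re-derive it from the polynomial consistency $\rT\IT\vec{v}=\vec{v}$ for $\vec{v}\in\Poly{k}(T)^d$ (hence $\mathrm{s}_T(\cdot,\IT\vec{v})=0$), the decomposition $(\vec{v}_T,\vec{0})=\IT\vec{v}_T-(\vec{0},((\vec{v}_T)_{|F})_{F\in\Fh[T]})$, and the defining relation of $\vec{R}_{TF}^k$; this makes the argument self-contained at the cost of a little extra algebra, and the signs work out as you claim. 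For the convective part, the paper first applies the global discrete integration by parts formula \eqref{eq:ibp} to move the directional derivative onto the test function and only then expands $\GwT[\uvec{u}_T]{\uvec{v}_T}$, whereas you expand $\GwT[\uvec{u}_T]{\uvec{u}_T}$ directly, integrate by parts locally, and invoke \eqref{eq:DT} with $\ell=2k$ to trade $\DIV\vec{u}_T$ for $\DT[2k]\uvec{u}_T$ --- in effect re-proving the one-element, one-polynomial-slot instance of Proposition~\ref{prop:ibp}; the boundary bookkeeping you describe does telescope to $\tfrac12(\vec{u}_F\SCAL\normal_{TF})(\vec{u}_F+\vec{u}_T)$ as claimed. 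Both routes are valid; the paper's is shorter because it reuses already-established global identities, yours is more elementary and makes the local cancellations explicit. No gaps.
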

\begin{proof}
  The following equivalent expression for the viscous bilinear form $\mathrm{a}_h$ defined by \eqref{eq:ah} descends from \cite[Lemma 3.3]{Di-Pietro.Tittarelli:18}, where the scalar case is considered: For all $\uvec{v}_h\in\Uhz$,
  \begin{equation}\label{eq:flux:ah}
    \mathrm{a}_h(\uvec{u}_h,\uvec{v}_h)
    =\sum_{T\in\Th}\left[
      \int_T\GRAD\rT\uvec{u}_T\SSCAL\GRAD \vec{v}_T
      - \sum_{F\in\Fh[T]}\int_F \flux_{TF}^\visc(\uvec{u}_T)\SCAL(\vec{v}_F-\vec{v}_T)
      \right].
  \end{equation}
  Using the global discrete integration by parts formula \eqref{eq:ibp} and accounting for the strongly enforced boundary condition, for any $\uvec{v}_h\in\Uhz$ the convective trilinear form defined by \eqref{eq:th} evaluated at $(\uvec{u}_h,\uvec{u}_h,\uvec{v}_h)$ can be rewritten as follows:
  \begin{multline*}
    \mathrm{t}_h(\uvec{u}_h,\uvec{u}_h,\uvec{v}_h)
    =
    \\
    -\sum_{T\in\Th}\left[
      \int_T\vec{u}_T\SCAL\GwT[\uvec{u}_T]{\uvec{v}_T}
      + \frac12\int_T\DT[2k]\uvec{u}_T(\vec{u}_T\SCAL\vec{v}_T)
      + \frac12\sum_{F\in\Fh[T]}\int_F(\vec{u}_F\SCAL\normal_{TF})(\vec{u}_F-\vec{u}_T)\SCAL(\vec{v}_F-\vec{v}_T)
      \right].
  \end{multline*}
  Hence, expanding $\GwT[\uvec{u}_T]{\uvec{v}_T}$ according to its definition \eqref{eq:GwT} with $\uvec{w}_T=\uvec{u}_T$ and $\vec{z}=\vec{u}_T$ for all $T\in\Th$ in the previous expression, adding $\mathrm{j}_h(\uvec{u}_h;\uvec{u}_h,\uvec{v}_h)$, and rearranging the terms we obtain
  \begin{multline}\label{eq:flux:th}
    \mathrm{t}_h(\uvec{u}_h,\uvec{u}_h,\uvec{v}_h)
    + \mathrm{j}_h(\uvec{u}_h;\uvec{u}_h,\uvec{v}_h)
    =
    \\
    -\sum_{T\in\Th}\left[
      \int_T\vec{u}_T\SCAL(\vec{u}_T\SCAL\GRAD)\vec{v}_T
      + \frac12\int_T\DT[2k]\uvec{u}_T(\vec{u}_T\SCAL\vec{v}_T)
      + \sum_{F\in\Fh[T]}\int_F\flux_{TF}^\conv(\uvec{u}_T)\SCAL(\vec{v}_F-\vec{v}_T)
    \right],
  \end{multline}
  where we have further observed that $(\vec{v}_F-\vec{v}_{T|F})\in\Poly{k}(F)^d$ to insert $\vlproj[F]{k}$ into the expression of the convective flux.
  Finally, writing the definition \eqref{eq:bh} of $\mathrm{b}_h$ for $q_h=p_h$ and expanding, for all $T\in\Th$, $\DT\uvec{v}_T$ according to its definition \eqref{eq:DT} with $\ell=k$ and $q=p_T$, we obtain
  \begin{equation}\label{eq:flux:bh}
    \mathrm{b}_h(\uvec{u}_h,p_h)
    =-\sum_{T\in\Th}\left[
      \int_T p_T\DIV\vec{v}_T + \sum_{F\in\Fh[T]}\int_Fp_T\normal_{TF}\SCAL(\vec{v}_F-\vec{v}_T)
      \right].
  \end{equation}
  Plugging \eqref{eq:flux:ah}, \eqref{eq:flux:th}, and \eqref{eq:flux:bh} into the discrete momentum equation \eqref{eq:discrete.strong.bc:momentum}, we obtain, for all $\uvec{v}_h\in\Uhz$,
  \begin{multline*}
    \sum_{T\in\Th}\Bigg[
      \int_T\nu\GRAD\rT\uvec{u}_T\SSCAL\GRAD\vec{v}_T
    - \int_T\vec{u}_T\SCAL(\vec{u}_T\SCAL\GRAD)\vec{v}_T
    - \frac12\int_T\DT[2k]\uvec{u}_T(\vec{u}_T\SCAL\vec{v}_T)
    - \int_T p_T\DIV\vec{v}_T
    \\
    + \sum_{F\in\Fh[T]}\int_F\left(
    \flux_{TF}^\visc(\uvec{u}_T)+\flux_{TF}^\conv(\uvec{u}_T)+p_T\normal_{TF}
    \right)\SCAL(\vec{v}_T-\vec{v}_F)
    \Bigg]
    = \sum_{T\in\Th}\int_T\vec{f}\SCAL\vec{v}_T.
  \end{multline*}
  Selecting $\uvec{v}_h$ such that $\vec{v}_T$ spans $\Poly{k}(T)^d$ for a selected mesh element $T\in\Th$ while $\vec{v}_{T'} = \vec{0}$ for all $T'\in\Th\setminus\{T\}$ and $\vec{v}_F = \vec{0}$ for all $F\in\Fh$, we obtain \eqref{eq:discrete.strong.bc:momentum.balance}.
  On the other hand, selecting $\uvec{v}_h$ such that $\vec{v}_T = \vec{0}$ for all $T\in\Th$, $\vec{v}_F$ spans $\Poly{k}(F)^d$ for a selected interface $F\in\Fhi$ such that $F\subset\partial T_1\cap\partial T_2$ for distinct mesh elements $T_1,T_2\in\Th$, and $\vec{v}_{F'} = \vec{0}$ for all $F'\in\Fh\setminus\{F\}$, we deduce \eqref{eq:discrete.strong.bc:momentum.flux.continuity}.
  
  The discrete local mass balance \eqref{eq:discrete.strong.bc:mass.balance} is a straightforward consequence of \eqref{eq:discrete.strong.bc:mass} expanding $\DT$ according to its definition \eqref{eq:DT}.
  As a matter of fact, accounting for Remark \ref{rem:mass.eq}, we can take as a test function $q_h$ such that, for a selected mesh element $T\in\Th$, $q_T$ spans $\Poly{k}(T)$ while $q_{T'}=0$ for all $T'\in\Th\setminus\{T\}$.
  Finally, the continuity of the mass fluxes expressed by \eqref{eq:discrete.strong.bc:mass.flux.continuity} is an immediate consequence of the single-valuedness of face unknowns.
\end{proof}

\begin{remark}[Finite volume local mass and momentum balances]
  Let an element $T\in\Th$ be fixed.
  We start by observing that, taking in \eqref{eq:discrete.strong.bc:momentum.balance} $\vec{v}_T$ constant in $T$, the terms in the first line of this expression vanish.
  As a matter of fact, the first, second, and fourth terms involve derivatives of $\vec{v}_T$, while, recalling \eqref{eq:DT2k.DTk}, we can write for the third term $-\frac12\int_T\DT[2k]\uvec{u}_T(\vec{u}_T\SCAL\vec{v}_T) = -\frac12\int_T\DT\uvec{u}_T(\vec{u}_T\SCAL\vec{v}_T)=0$, where we have used \eqref{eq:discrete:mass} to infer $\DT\uvec{u}_T=0$ and conclude.
  Hence, letting now $\vec{v}_T$ be successively equal to the vectors of the canonical basis of $\Real^d$, we have the following finite volume-like local momentum balance:
  \begin{subequations}\label{eq:discrete.strong.bc:balance:fv}
    \begin{equation}\label{eq:discrete.strong.bc:momentum.balance:fv}
      \sum_{F\in\Fh[T]}\int_F\left(\flux_{TF}^\visc(\uvec{u}_T)+\flux_{TF}^\conv(\uvec{u}_T)+p_T\normal_{TF}\right) = \int_T\vec{f}.
    \end{equation}
    Similarly, taking in \eqref{eq:discrete.strong.bc:mass.balance} $q_T=1$, we have the following finite volume-like local mass balance:
    \begin{equation}\label{eq:discrete.strong.bc:mass.balance:fv}
       \sum_{F\in\Fh[T]}\int_F(\vec{u}_F\SCAL\normal_{TF}) = 0.
    \end{equation}
  \end{subequations}
  The relations \eqref{eq:discrete.strong.bc:balance:fv} are relevant from the engineering point of view, as they guarantee that both momentum and mass are preserved at the local level.
  From the mathematical point of view, they can be used, e.g., to derive \emph{a posteriori} error estimators by flux equilibration.
\end{remark}


\appendix

\section{Proofs of intermediate results}\label{sec:proofs}

This section collects the proofs of the intermediate results required in the analysis.

\subsection{Discrete Sobolev embeddings}\label{sec:convergence.analysis:preliminary.results:sobolev.embeddings}

\begin{proof}[Proof of Proposition \ref{prop:sobolev.embeddings}]
    Let $\uvec{v}_h\in\Uh$.
  It follows from \cite[Theorem 2.1]{Di-Pietro.Ern:10} (see also \cite[Theorem 5.3]{Di-Pietro.Ern:12}) that
  \begin{equation}\label{eq:sobolev.embeddings:dG}
    \norm[L^q(\Omega)^d]{\vec{v}_h}
    \lsim\left(
    \sum_{T\in\Th} \norm[L^2(T)^{d\times d}]{\GRAD\vec{v}_T}^2
    + \sum_{F\in\Fhi}h_F^{-1}\norm[L^2(F)^d]{\jump{\vec{v}_h}}^2
    + \sum_{F\in\Fhb}h_F^{-1}\norm[L^2(F)^d]{\vec{v}_{T_F}}^2
    \right)^{\frac12},
  \end{equation}
  where, for all $F\in\Fhi$ such that $F\in\Fh[T_1]\cap\Fh[T_2]$ for distinct mesh elements $T_1,T_2\in\Th$, we have introduced the jump operator such that $\jump{\vec{v}_h}\coloneq\vec{v}_{h|T_1}-\vec{v}_{h|T_2}$.
  For all $F\in\Fhi$, inserting $\pm\vec{v}_F$ and using a triangle inequality, it is readily inferred that
  $
  \norm[L^2(F)^d]{\jump{\vec{v}_h}}
  \le\norm[L^2(F)^d]{\vec{v}_{T_1}-\vec{v}_F}
  + \norm[L^2(F)^d]{\vec{v}_{T_2}-\vec{v}_F}.
  $
  Similarly, for all $F\in\Fhb$ we can write
  $
  \norm[L^2(F)^d]{\vec{v}_{T_F}}
  \le\norm[L^2(F)^d]{\vec{v}_{T_F}-\vec{v}_F}
  + \norm[L^2(F)^d]{\vec{v}_F}.
  $
  Plugging the previous inequalities into \eqref{eq:sobolev.embeddings:dG}, we obtain
  \begin{multline*}
  \norm[L^q(\Omega)^d]{\vec{v}_h}\\
  \lsim\left(
  \sum_{T\in\Th}\norm[L^2(T)^{d\times d}]{\GRAD\vec{v}_T}^2
  + \sum_{F\in\Fh}\sum_{T\in\Th[F]} h_F^{-1}\norm[L^2(F)^d]{\vec{v}_F-\vec{v}_T}^2
  + \sum_{F\in\Fhb}h_F^{-1}\norm[L^2(F)^d]{\vec{v}_F}^2
  \right)^{\frac12}.
  \end{multline*}
  Using the relation $\sum_{F\in\Fh}\sum_{T\in\Th[F]}\bullet=\sum_{T\in\Th}\sum_{F\in\Fh[T]}\bullet$ and recalling the definitions \eqref{eq:norm.1h} of $\norm[1,h]{{\cdot}}$ and \eqref{eq:norm.1T} of $\norm[1,T]{{\cdot}}$ yields \eqref{eq:sobolev.embeddings}.
\end{proof}

\subsection{Approximation properties of the discrete directional derivative}\label{sec:convegence.analysis:preliminary.results:GwT}

\begin{proof}[Proof of Proposition \ref{prop:GwT}]
  Set, for the sake of brevity $\uhvec{v}_T\coloneq\IT\vec{v}$.
  Writing \eqref{eq:GwT.GT} for $\uvec{w}_T=\uvec{v}_T=\uhvec{v}_T$ and summing and subtracting $\int_T(\hvec{v}_T\SCAL\GRAD)\vec{v}\SCAL\vec{z}$, we infer that it holds
  \begin{equation}\label{eq:approx.GwT:basic}
    \begin{aligned}
      \int_T\left(\GwT[\uhvec{v}_T]{\uhvec{v}_T}-(\vec{v}\SCAL\GRAD)\vec{v}\right)\SCAL\vec{z}
      &= \int_T[(\hvec{v}_T\SCAL\GT[2k])\uhvec{v}_T-(\hvec{v}_T\SCAL\GRAD)\vec{v}]\SCAL\vec{z}
      + \int_T\left[(\hvec{v}_T-\vec{v})\SCAL\GRAD\right]\vec{v}\SCAL\vec{z}
      \\
      &\quad
      + \sum_{F\in\Fh[T]}\int_F(\hvec{v}_F-\hvec{v}_T)\SCAL\normal_{TF}(\hvec{v}_F-\hvec{v}_T)\SCAL\vec{z}
      \eqcolon\term_1+\term_2+\term_3.
    \end{aligned}
  \end{equation}
  For the first term, using the orthogonality property \eqref{eq:GT.orth} with $\matr{\tau}=(\vec{z}\otimes\vlproj[T]{0}\vec{v})$, we can use a H\"{o}lder inequality with exponents $(4,2,4)$ to get
  \begin{align*}
  |\term_1|
  ={}& \left|
  \int_T(\GT[2k]\uhvec{v}_T-\GRAD\vec{v})
  \SSCAL\vec{z}\otimes(\hvec{v}_T-\vlproj[T]{0}\vec{v})
  \right|\\
  \le{}&\norm[L^4(T)^d]{\vec{v}-\vlproj[T]{0}\vec{v}}
  \norm[L^2(T)^{d\times d}]{\GT[2k]\uhvec{v}_T-\GRAD\vec{v}}
  \norm[L^4(T)^d]{\vec{z}},
  \end{align*}
  where we have additionally used the linearity, idempotency, and $L^4$-boundedness  of $\vlproj[T]{k}$ (see \cite[Lemma 3.2]{Di-Pietro.Droniou:17}) to write 
	$$
	\norm[L^4(T)^d]{\hvec{v}_T-\vlproj[T]{0}\vec{v}}=\norm[L^4(T)^d]{\vlproj[T]{k}(\vec{v}-\vlproj[T]{0}\vec{v})}\lsim\norm[L^4(T)^d]{\vec{v}-\vlproj[T]{0}\vec{v}}.
	$$
  Using the approximation properties \eqref{eq:lproj.approx:T} of the $L^2$-orthogonal projector with $l=0$, $p=4$, $s=1$, and $m=0$ for the first factor and
  the approximation properties \eqref{eq:approx.GT} of the gradient reconstruction with $\ell=2k$ for the second factor, we get
  \begin{equation}\label{eq:approx.GwT:T1}
    |\term_1|\lsim h_T^{k+1}\seminorm[H^{k+1}(T)^d]{\vec{v}}\seminorm[W^{1,4}(T)^d]{\vec{v}}\norm[L^4(T)^d]{\vec{z}}.
  \end{equation}

  For the second term, a H\"{o}lder inequality with exponents $(2,4,4)$ gives
  \begin{equation}\label{eq:approx.GwT:T2}
    |\term_2|\lsim\norm[L^2(T)^d]{\vec{v}-\hvec{v}_T}\seminorm[W^{1,4}(T)^d]{\vec{v}}\norm[L^4(T)^d]{\vec{z}}
    \lsim h_T^{k+1}\seminorm[H^{k+1}(T)^d]{\vec{v}}\seminorm[W^{1,4}(T)^d]{\vec{v}}\norm[L^4(T)^d]{\vec{z}},
  \end{equation}
  where we have used the optimal approximation properties \eqref{eq:lproj.approx:T} of the $L^2$-orthogonal projector with $l=k$, $p=2$, $s=k+1$, and $m=0$ to conclude.

  For the third term, we will need the following discrete trace inequality, valid for all $\alpha\in[1,\infty]$, all $T\in\Th$, and all $F\in\Fh[T]$:
  \begin{equation}\label{ineq:discrete_tr}
    \norm[L^\alpha(F)]{v}\lsim h_T^{-\frac{1}{\alpha}}\norm[L^\alpha(T)]{v}\qquad\forall v\in\Poly{l}(T),
  \end{equation}
	with hidden constant additionally depending on $\alpha$ and $l$ (but not on $v$ or $T$).
  After observing that, owing to the linearity, idempotency, and boundedness of $\vlproj[F]{k}$, it holds, for $\alpha\in\{2,4\}$,
\[
\norm[L^\alpha(F)^d]{\uhvec{v}_F-\uhvec{v}_T}=\norm[L^\alpha(F)^d]{\vlproj[F]{k}(\vec{v}-\vlproj[T]{k}\vec{v})}\lsim\norm[L^\alpha(F)^d]{\vec{v}-\vlproj[T]{k}\vec{v}},
\]
we can write
  \begin{align}
    |\term_3|
    \lsim{}&\sum_{F\in\Fh[T]}
    \norm[L^2(F)^d]{\vec{v}-\vlproj[T]{k}\vec{v}}    
    \norm[L^4(F)^d]{\vec{v}-\vlproj[T]{k}\vec{v}}
    \norm[L^4(F)^d]{\vec{z}}\nonumber\\
    \lsim{}& h_T^{k+1}\seminorm[H^{k+1}(T)^d]{\vec{v}}\seminorm[W^{1,4}(T)^d]{\vec{v}}\norm[L^4(T)^d]{\vec{z}},
\label{eq:approx.GwT:T3}
  \end{align}
  where we have used the approximation properties \eqref{eq:lproj.approx:FT} of the $L^2$-orthogonal projector with $l=k$ and, respectively, $\alpha=2$, $m=0$, and $s=k+1$ and $\alpha=4$, $m=0$, and $s=1$ to bound the first two factors inside the summation, and the discrete trace inequality \eqref{ineq:discrete_tr} with $\alpha=4$ to bound the third one.

  Plugging \eqref{eq:approx.GwT:T1}, \eqref{eq:approx.GwT:T2}, and \eqref{eq:approx.GwT:T3} into \eqref{eq:approx.GwT:basic} yields the conclusion.
\end{proof}

\subsection{Viscous term}\label{sec:convergence.analysis:preliminary.results:ah}

\begin{proof}[Proof of Lemma \ref{lem:ah}]
  (i) \emph{Stability and boundedness.}
  Let $\uvec{v}_h\in\Uh$.
  Summing \eqref{eq:aT:stability} over $T\in\Th$ and using the resulting equivalence in combination with \eqref{eq:ah}, we can write
  $$
  \mathrm{a}_h(\uvec{v}_h,\uvec{v}_h)
  = \sum_{T\in\Th}\mathrm{a}_T(\uvec{v}_T,\uvec{v}_T) + \sum_{F\in\Fhb}h_F^{-1}\norm[L^2(F)^d]{\vec{v}_F}^2
  \esim[C_{\mathrm{a}}^{-1}]\sum_{T\in\Th}\norm[1,T]{\uvec{v}_T}^2 + \sum_{F\in\Fhb}h_F^{-1}\norm[L^2(F)^d]{\vec{v}_F}^2.
  $$
  \medskip\\
  (ii) \emph{Consistency.} Let, for the sake of brevity, $\uhvec{w}_h\coloneq\Ih\vec{w}$ and set, for all $T\in\Th$, $\check{\vec{w}}_T\coloneq\rT\IT\vec{w}$.
  It follows from \cite[Lemma 3]{Di-Pietro.Ern.ea:14} (see also \cite[Theorems 1.1 and 1.2]{Di-Pietro.Droniou:17*1}) that it holds: For all $T\in\Th$,
  \begin{equation}\label{eq:eproj.approx}
    \norm[L^2(T)^{d\times d}]{\GRAD(\check{\vec{w}}_T-\vec{w})}
    + h_T^{\frac12}\norm[L^2(\partial T)^{d\times d}]{\GRAD(\check{\vec{w}}_T-\vec{w})}
    \lsim h_T^{k+1}\seminorm[H^{k+2}(T)^d]{\vec{w}}.
  \end{equation}
  Integrating by parts element by element and using the fact that $\vec{v}_F$ is single-valued for $F\in\Fhi$ to insert it into the second term, we can write
  \begin{equation}\label{eq:ah:consistency:1}
		\begin{aligned}
    \int_\Omega\LAPL\vec{w}\SCAL\vec{v}_h
    ={}& -\sum_{T\in\Th}\left(
    \int_T\GRAD\vec{w}\SSCAL\GRAD\vec{v}_T
    + \sum_{F\in\Fh[T]}\int_F\GRAD\vec{w}\normal_{TF}\SCAL(\vec{v}_F - \vec{v}_T)
    \right)\\
    &+ \sum_{F\in\Fhb}\int_F\GRAD\vec{w}\normal_F\SCAL\vec{v}_F.
		\end{aligned}
  \end{equation}
  On the other hand, plugging the definition \eqref{eq:aT} of $\mathrm{a}_T$ into \eqref{eq:ah}, and expanding, for all $T\in\Th$, $\rT\uvec{v}_T$ according to \eqref{eq:characterisation.rT} with $\vec{w}=\check{\vec{w}}_T$, we can write
  \begin{align}
      \mathrm{a}_h(\uhvec{w}_h,\uvec{v}_h)
      ={}&\sum_{T\in\Th}\left(
      \int_T\GRAD\check{\vec{w}}_T\SSCAL\GRAD\vec{v}_T
      + \sum_{F\in\Fh[T]}\int_F\GRAD\check{\vec{w}}_T\normal_{TF}\SCAL(\vec{v}_F - \vec{v}_T)
      \right)\nonumber\\
      &
      - \sum_{F\in\Fhb}\int_F\GRAD\check{\vec{w}}_{T_F}\normal_F\SCAL\vec{v}_F    
      + \sum_{T\in\Th}\mathrm{s}_T(\uhvec{w}_T,\uvec{v}_T),
	\label{eq:ah:consistency:2}
  \end{align}
  where, to cancel the remaining terms inside the sum over $F\in\Fhb$, we have used the fact that $\hvec{w}_F=\vec{0}$ for all $F\in\Fhb$ owing to the assumed regularity $\vec{w}\in H_0^1(\Omega)^d$.
  Summing \eqref{eq:ah:consistency:1} and \eqref{eq:ah:consistency:2}, and using Cauchy--Schwarz inequalities for the first and last terms, the H\"{o}lder inequality with exponents $(2,\infty,2)$ together with $\norm[L^\infty(F)^d]{\normal_{TF}}\le 1$ for the second term, and the definition \eqref{eq:norm.1T} of $\norm[1,T]{{\cdot}}$ and $h_F\le h_T$, it is inferred that
  $$
  \begin{aligned}
    \Big|
    \int_\Omega\LAPL\vec{w}\SCAL\vec{v}_h{}& + \mathrm{a}_h(\uhvec{w}_h,\uvec{v}_h)
    \Big|\\
    &\lsim
    \sum_{T\in\Th}\left(
    \norm[L^2(T)^{d\times d}]{\GRAD(\check{\vec{w}}_T-\vec{w})}
    + h_T^{\frac12}\norm[L^2(\partial T)^{d\times d}]{\GRAD(\check{\vec{w}}_T-\vec{w})}
    \right)\norm[1,T]{\uvec{v}_T}
    \\
    &\quad
    + \sum_{F\in\Fhb}h_{T_F}^{\frac12}\norm[L^2(F)^{d\times d}]{\GRAD(\check{\vec{w}}_{T_F}-\vec{w})}~h_F^{-\frac12}\norm[L^2(F)^d]{\vec{v}_F}
    \\
    &\quad
    + \sum_{T\in\Th}\mathrm{s}_T(\uhvec{w}_T,\uhvec{w}_T)^{\frac12}\mathrm{s}_T(\uvec{v}_T,\uvec{v}_T)^{\frac12}.
  \end{aligned}
  $$
  Using the approximation properties of \eqref{eq:eproj.approx} of $\uhvec{w}_T$ to estimate the terms in the first two lines and the consistency \eqref{eq:sT:consistency} of the viscous stabilisation bilinear form $\mathrm{s}_T$ together with the fact that $\mathrm{s}_T(\uvec{v}_T,\uvec{v}_T)^{\frac12}\lsim\norm[1,T]{\uvec{v}_T}$ owing to the local seminorm equivalence \eqref{eq:aT:stability} to estimate the term in the third line, we arrive at
  $$
  \begin{aligned}
    \left|
    \int_\Omega\LAPL\vec{w}\SCAL\vec{v}_h + \mathrm{a}_h(\uhvec{w}_h,\uvec{v}_h)
    \right|
    \lsim{}&
    \sum_{T\in\Th} h_T^{k+1}\seminorm[H^{k+2}(T)^d]{\vec{w}}\norm[1,T]{\uvec{v}_T}\\
    &+ \sum_{F\in\Fhb}  h_{T_F}^{k+1}\seminorm[H^{k+2}(T_F)^d]{\vec{w}}~h_F^{-\frac12}\norm[L^2(F)^d]{\vec{v}_F}
    \\
    \lsim{}&
    h^{k+1}\seminorm[H^{k+2}(\Th)^d]{\vec{w}}\norm[1,h]{\uvec{v}_h},
  \end{aligned}
  $$
  where the conclusion follows from Cauchy--Schwarz inequalities on the sums over $T\in\Th$ and $F\in\Fhb$.
  Passing to the supremum over the set $\left\{\uvec{v}_h\in\Uh\st\norm[1,h]{\uvec{v}_h}=1\right\}$ yields \eqref{eq:ah:consistency}.
\end{proof}

\subsection{Pressure-velocity coupling}\label{sec:convergence.analysis:preliminary.results:bh}

\begin{proof}[Proof of Lemma \ref{lem:bh}]
  (i) \emph{Consistency/1.}
  Using the definition \eqref{eq:bh} of the bilinear form $\mathrm{b}_h$, the commuting property \eqref{eq:DT:commuting} of the discrete divergence together with the fact that boundary unknowns in $\Ih\vec{v}$ vanish since $\vec{v}\in H_0^1(\Omega)^d$, we obtain, for all $q_h\in\Poly{k}(\Th)$, letting $q_T\coloneq q_{h|T}$ for all $T\in\Th$,
  $$
  \mathrm{b}_h(\Ih\vec{v},q_h)
  = -\sum_{T\in\Th}\int_T\lproj[T]{k}(\DIV\vec{v})q_T
  = -\sum_{T\in\Th}\int_T(\DIV\vec{v})q_T
  = b(\vec{v},q_h).
  $$
  \medskip\\
  (ii) \emph{Stability.} We proceed as in \cite[Lemma 17]{Botti.Di-Pietro.ea:18}.
  From the surjectivity of the continuous divergence operator from $H_0^1(\Omega)^d$ to $L_0^2(\Omega)$ (see, e.g., \cite[Section 2.2]{Girault.Raviart:86}), we infer the existence of $\vec{v}_{q_h}\in H_0^1(\Omega)^d$ such that $-\DIV\vec{v}_{q_h}=q_h$ and $\norm[H^1(\Omega)^d]{\vec{v}_{q_h}}\lsim\norm{q_h}$, with hidden constant depending only on $\Omega$.
  Using the above fact, we get
  $
  \norm{q_h}^2= -\int_\Omega(\DIV\vec{v}_{q_h})q_h = \mathrm{b}_h(\Ih\vec{v}_{q_h},q_h),
  $
  where we have used \eqref{eq:bh:consistency.2} with $\vec{v}=\vec{v}_{q_h}$.
  Hence, denoting by $\$$ the supremum in the right-hand side of \eqref{eq:bh:stability} and using the boundedness property \eqref{est:CI} of the global interpolator, we can write
  $
  \norm{q_h}^2
  \le\$\norm[1,h]{\Ih\vec{v}_{q_h}}
  \lsim\$\norm[H^1(\Omega)^d]{\vec{v}_{q_h}}
  \lsim\$\norm{q_h}.
  $
  Simplifying yields \eqref{eq:bh:stability}.
  \medskip\\
  (iii) \emph{Consistency/2.}
  Integrating by parts element by element and using the regularity $q\in H^1(\Omega)$ together with the fact that $\vec{v}_F$ is single-valued for all $F\in\Fhi$ to insert it into the second term, we can write
  \begin{equation}\label{eq:bh:consistency:1}
    \int_\Omega\GRAD q\SCAL\vec{v}_h
    = -\sum_{T\in\Th}\left(
    \int_T q(\DIV\vec{v}_T) + \sum_{F\in\Fh[T]}\int_F q(\vec{v}_F-\vec{v}_T)\SCAL\normal_{TF}
    \right)
    + \sum_{F\in\Fhb}\int_F q(\vec{v}_F\SCAL\normal_F).
  \end{equation}
  On the other hand, recalling the definition \eqref{eq:bh} of $\mathrm{b}_h$ and expanding, for all $T\in\Th$, $\DT\uvec{v}_T$ according to \eqref{eq:DT} with $\ell=k$ and $q=\lproj[T]{k}q_{|T}$, we have that
	\begin{equation}\label{eq:bh:consistency:2}
  \begin{aligned}
    \mathrm{b}_h(\uvec{v}_h,\lproj{k}q)
    ={}& -\sum_{T\in\Th}\left(
    \int_T q(\DIV\vec{v}_T) + \sum_{F\in\Fh[T]}\int_F\lproj[T]{k}q_{|T}(\vec{v}_F-\vec{v}_T)\SCAL\normal_{TF}
    \right)\\
    &+ \sum_{F\in\Fhb}\int_F \lproj[T_F]{k}q_{|T_F}(\vec{v}_F\SCAL\normal_F),
  \end{aligned}
	\end{equation}
  where we have used the definition \eqref{eq:lproj} of $\lproj[T]{k}$ together with the fact that $\DIV\vec{v}_T\in\Poly{k-1}(T)\subset\Poly{k}(T)$ to remove the projector from the first term inside the summation over $T\in\Th$.
  Subtracting \eqref{eq:bh:consistency:2} from \eqref{eq:bh:consistency:1}, taking absolute values, and using H\"{o}lder inequalities with exponents $(2,2,\infty)$ together with $\norm[L^\infty(F)^d]{\normal_{TF}}\le 1$ and $h_F\le h_T$, we get
  $$
  \begin{aligned}
    \left|\int_\Omega\GRAD q\SCAL\vec{v}_h - \mathrm{b}_h(\uvec{v}_h,\lproj{k}q)\right|
    \le{}&
    \sum_{T\in\Th}\sum_{F\in\Fh[T]} h_T^{\frac12}\norm[L^2(F)]{q-\lproj[T]{k}q_{|T}}~h_F^{-\frac12}\norm[L^2(F)^d]{\vec{v}_F-\vec{v}_T}
    \\
    &
    + \sum_{F\in\Fhb} h_{T_F}^{\frac12}\norm[L^2(F)]{q-\lproj[T_F]{k}q_{|T_F}}~h_F^{-\frac12}\norm[L^2(F)^d]{\vec{v}_F}
    \\
    \lsim{}&\sum_{T\in\Th}h_T^{k+1}\seminorm[H^{k+1}(T)]{q}\norm[1,T]{\uvec{v}_T}\\
    &+ \sum_{F\in\Fhb} h_{T_F}^{k+1}\seminorm[H^{k+1}(T_F)]{q}~h_F^{-\frac12}\norm[L^2(F)^d]{\vec{v}_F},
  \end{aligned}
  $$
  where we have concluded using the optimal approximation properties \eqref{eq:lproj.approx:FT} of the $L^2$-orthogonal projector with $l=k$, $p=2$, $s=k+1$, $m=0$.
  Using Cauchy--Schwarz inequalities on the sums over $T\in\Th$ and $F\in\Fhb$ and recalling the definition \eqref{eq:norm.1h} of $\norm[1,h]{{\cdot}}$ gives
  $$
  \left|\int_\Omega\GRAD q\SCAL\vec{v}_h - \mathrm{b}_h(\uvec{v}_h,\lproj{k}q)\right|
  \lsim h^{k+1}\seminorm[H^{k+1}(\Th)]{q}\norm[1,h]{\uvec{v}_h}.
  $$
  Passing to the supremum over the set $\left\{\uvec{v}_h\in\Uh\st\norm[1,h]{\uvec{v}_h}=1\right\}$ yields \eqref{eq:bh:consistency}.
\end{proof}

\subsection{Convective term}\label{sec:convergence.analysis:preliminary.results:th}

\begin{proof}[Proof of Lemma \ref{lem:th}]
  (i) \emph{Skew-symmetry and non-dissipation.}
  To prove \eqref{eq:th:skew-symmetry}, plug the definition \eqref{eq:tT} of $\mathrm{t}_T$ into \eqref{eq:th} and use the discrete integration by parts formula \eqref{eq:ibp} to write
  \begin{multline*}
    \sum_{T\in\Th}\left(
    \frac12\int_T\DT[2k]\uvec{w}_T(\vec{v}_T\SCAL\vec{z}_T)
    + \frac12\sum_{F\in\Fh[T]}\int_F(\vec{w}_F\SCAL\normal_{TF})(\vec{v}_F-\vec{v}_T)\SCAL(\vec{z}_F-\vec{z}_T)
    \right)
    \\
    =-\frac12\sum_{T\in\Th}\int_T\left(
    \GwT{\uvec{v}_T}\SCAL\vec{z}_T + \vec{v}_T\SCAL\GwT{\uvec{z}_T}
    \right)
    +\frac12\sum_{F\in\Fhb}\int_F(\vec{w}_F\SCAL\normal_F)\vec{v}_F\SCAL\vec{z}_F.
  \end{multline*}
  The non-dissipation property \eqref{eq:th:non-dissipation} immediately follows letting $\uvec{z}_h=\uvec{v}_h$ in \eqref{eq:th:skew-symmetry}.
  \medskip\\
  (ii) \emph{Boundedness.}
  Accounting for \eqref{eq:th:skew-symmetry}, it suffices to prove that it holds, for all $\uvec{w}_h,\uvec{v}_h,\uvec{z}_h\in\Uh$,
  \begin{equation}\label{eq:th:boundedness:T}
    \term\coloneq\left| \sum_{T\in\Th}\int_T\GwT{\uvec{v}_T}\SCAL\vec{z}_T\right|
    \lsim\norm[1,h]{\uvec{w}_h}\norm[1,h]{\uvec{v}_h}\norm[1,h]{\uvec{z}_h},
  \end{equation}
  then apply this bound twice exchanging the roles of $\uvec{v}_h$ and $\uvec{z}_h$.
  Recalling \eqref{eq:GwT.GT}, we can write
  \begin{equation}\label{eq:th:boundedness:basic}
    \term
    \le\left|
    \sum_{T\in\Th}\int_T(\vec{w}_T\SCAL\GT[2k])\uvec{v}_T\SCAL\vec{z}_T  
    \right| + \left|
    \sum_{T\in\Th}\sum_{F\in\Fh[T]}\int_F(\vec{w}_F-\vec{w}_T)\SCAL\normal_{TF}(\vec{v}_F-\vec{v}_T)\SCAL\vec{z}_T
    \right|
    \eqcolon \term_1 + \term_2.
  \end{equation}
  To estimate the first term, we preliminarily observe that $\norm[L^2(T)^{d\times d}]{\GT[2k]\uvec{v}_T}\lsim\norm[1,T]{\uvec{v}_T}$, as can be easily verified letting $\matr{\tau}=\GT[2k]\uvec{v}_T$ in \eqref{eq:GT} written for $\ell=2k$, then applying Cauchy--Schwarz and discrete trace inequalities (see \eqref{ineq:discrete_tr}) to bound the right-hand side.
  Then, H\"{o}lder inequalities with exponents $(4,2,4)$, first on the integral over $T$ then on the sum over $T\in\Th$, yield
  \begin{equation}\label{eq:th:boundedness:T1}
    \begin{aligned}
      \term_1
      &\le\sum_{T\in\Th}\norm[L^4(T)^d]{\vec{w}_T}\norm[L^2(T)^{d\times d}]{\GT[2k]\uvec{v}_T}\norm[L^4(T)^d]{\vec{z}_T}
      \\
      &\le\sum_{T\in\Th}\norm[L^4(T)^d]{\vec{w}_T}\norm[1,T]{\uvec{v}_T}\norm[L^4(T)^d]{\vec{z}_T}
      \\
      &\le\norm[L^4(\Omega)^d]{\vec{w}_h}\norm[1,h]{\uvec{v}_h}\norm[L^4(\Omega)^d]{\vec{z}_h}
      \lesssim\norm[1,h]{\uvec{w}_h}\norm[1,h]{\uvec{v}_h}\norm[1,h]{\uvec{z}_h},
    \end{aligned}
  \end{equation}
  where we have used the discrete Sobolev embedding \eqref{eq:sobolev.embeddings} with $q=4$ to conclude.
  For the second term, we will need the following reverse Lebesgue embeddings, proved in \cite[Lemma 5.1]{Di-Pietro.Droniou:17}:
  For all $(\alpha,\beta)\in [1,\infty]$, all $T\in\Th$, and all $F\in\Fh[T]$,
  \begin{alignat}{2}
    \label{ineq:reverse_leb}
    \norm[L^\alpha(F)]{v}&\lsim |F|^{\frac{1}{\alpha}-\frac{1}{\beta}}\norm[L^\beta(F)]{v}&\qquad&\forall v\in\Poly{l}(F).
  \end{alignat}
  We have that
  \begin{equation*}
    \begin{aligned}
      \term_2
      &\le\sum_{T\in\Th}\sum_{F\in\Fh[T]}\norm[L^4(F)^d]{\vec{w}_F-\vec{w}_T}\norm[L^2(F)^d]{\vec{v}_F-\vec{v}_T}\norm[L^4(F)^d]{\vec{z}_T}
      \\
      &\lsim\norm[L^4(\Omega)^d]{\vec{z}_h}\sum_{T\in\Th}\sum_{F\in\Fh[T]}h_F^{-\frac14}\norm[L^4(F)^d]{\vec{w}_F-\vec{w}_T}\norm[L^2(F)^d]{\vec{v}_F-\vec{v}_T}
      \\
      &\lsim\norm[L^4(\Omega)^d]{\vec{z}_h}\sum_{T\in\Th}\sum_{F\in\Fh[T]}h_F^{-\frac14}|F|^{-\frac14}\norm[L^2(F)^d]{\vec{w}_F-\vec{w}_T}\norm[L^2(F)^d]{\vec{v}_F-\vec{v}_T}
      \\
      &\lsim\norm[L^4(\Omega)^d]{\vec{z}_h}\sum_{T\in\Th}\sum_{F\in\Fh[T]}h_F^{-\frac12}\norm[L^2(F)^d]{\vec{w}_F-\vec{w}_T}~h_F^{-\frac12}\norm[L^2(F)^d]{\vec{v}_F-\vec{v}_T}\\
      &\lsim\norm[1,h]{\uvec{z}_h}\norm[1,h]{\uvec{w}_h}\norm[1,h]{\uvec{v}_h},
    \end{aligned}
  \end{equation*}
  where we have used a H\"older inequality with exponents $(4,\infty,2,4)$ together with the bound $\norm[L^\infty(F)^d]{\normal_{TF}}\le 1$ in the first line,
  the discrete trace inequality \eqref{ineq:discrete_tr} with $\alpha=4$ followed by $\norm[L^4(T)^d]{\vec{z}_T}\le\norm[L^4(\Omega)^d]{\vec{z}_h}$ for all $T\in\Th$ in the second line, the reverse Lebesgue embedding \eqref{ineq:reverse_leb} with $(\alpha,\beta)=(4,2)$ in the third line,
  the bound $h_F^{-\frac14}|F|^{-\frac14}\lsim h_F^{-\frac14-\frac{d-1}{4}}\lsim h_F^{-1}=h_F^{-\frac12}h_F^{-\frac12}$ (valid since $d\le 3$) in the fourth line,
  and the discrete Sobolev embedding \eqref{eq:sobolev.embeddings} with $q=4$ followed by a discrete Cauchy--Schwarz inequality on the sums over $T\in\Th$ and $F\in\Fh[T]$ and the definition \eqref{eq:norm.1h} of $\norm[1,h]{{\cdot}}$ to conclude.
  \medskip\\
  (iii) \emph{Consistency.}
  Set, for the sake of brevity, $\uhvec{w}_h\coloneq\Ih\vec{w}$.
  We decompose the argument of the supremum into the sum of the following terms:
  $$
  \begin{aligned}
    \term_1
    &\coloneq
    \sum_{T\in\Th}\int_T\left[
      (\vec{w}\SCAL\GRAD)\vec{w}-\GwT[\uhvec{w}_T]{\uhvec{w}_T}
      \right]\SCAL\vec{z}_T,
    \\
    \term_2
    &\coloneq
    \frac12\sum_{T\in\Th}\int_T\left[
      (\DIV\vec{w})\vec{w} - (\DT[2k]\uhvec{w}_T)\hvec{w}_T
      \right]\SCAL\vec{z}_T
    \\
    \term_3
    &\coloneq
    \frac12\sum_{T\in\Th}\sum_{F\in\Fh[T]}
    \int_F(\hvec{w}_F\SCAL\normal_{TF})(\hvec{w}_F-\hvec{w}_T)\SCAL(\vec{z}_F-\vec{z}_T),
    \\
    \term_4
    &\coloneq
    -\frac12\sum_{F\in\Fhb}\int_F(\hvec{w}_F\SCAL\normal_F)(\hvec{w}_F\SCAL\vec{z}_F).
  \end{aligned}
  $$
  Using the approximation properties \eqref{eq:approx.GwT} of the discrete directional derivative followed by the discrete Sobolev embedding \eqref{eq:sobolev.embeddings} with $q=4$, it is inferred for the first term:
  \begin{align}
    |\term_1|
    \lsim{}&
    h^{k+1}\seminorm[H^{k+1}(\Th)^d]{\vec{w}}\seminorm[W^{1,4}(\Omega)^d]{\vec{w}}\norm[L^4(\Omega)^d]{\vec{z}_h}\nonumber\\
    \lsim{}&
    h^{k+1}\seminorm[H^{k+1}(\Th)^d]{\vec{w}}\seminorm[W^{1,4}(\Omega)^d]{\vec{w}}\norm[1,h]{\uvec{z}_h}.
		\label{eq:th:consistency:T1}
  \end{align}
  Note that the assumption $\vec{w}\in H^1_0(\Omega)^d\cap W^{k+1,4}(\Th)^d$ ensures that $\vec{w}\in W^{1,4}(\Omega)^d$ since $\vec{w}_{|T}\in W^{k+1,4}(T)\subset W^{1,4}(T)$ for all $T\in\Th$, and the traces of $\vec{w}$ at the interfaces $F\in\Fhi$ are continuous (owing to $\vec{w}\in H^1(\Omega)^d$).

  The second term is further decomposed inserting $\pm(\DIV\vec{w})\hvec{w}_T\SCAL\vec{z}_T$:
  $$
  \term_2 =
  \frac12\sum_{T\in\Th}\int_T(\DIV\vec{w}-\DT[2k]\uhvec{w}_T)\hvec{w}_T\SCAL\vec{z}_T
  + \frac12\sum_{T\in\Th}\int_T(\DIV\vec{w})(\vec{w}-\hvec{w}_T)\SCAL\vec{z}_T
  \eqcolon\term_{2,1} + \term_{2,2}.
  $$
  After observing that $(\DIV\vec{w}-\DT[2k]\uhvec{w}_T)$ is $L^2$-orthogonal to functions in $\Poly{k}(T)$ as a consequence of \eqref{eq:GT.orth}, and that $\vlproj[T]{0}\vec{w}\SCAL\vec{z}_T\in\Poly{k}(T)$, we can write
  $$
  \begin{aligned}
    |\term_{2,1}|
    &= \frac12\left|
    \sum_{T\in\Th}\int_T(\DIV\vec{w}-\DT[2k]\uhvec{w}_T)(\hvec{w}_T-\vlproj[T]{0}\vec{w})\SCAL\vec{z}_T  
    \right|
    \\
    &\lsim
    \sum_{T\in\Th}\norm[L^2(T)]{\DIV\vec{w}-\DT[2k]\uhvec{w}_T}
    \norm[L^4(T)^d]{\hvec{w}_T-\vlproj[T]{0}\vec{w}}
    \norm[L^4(T)^d]{\vec{z}_T}
    \\
    &\lsim h^{k+1}
    \seminorm[H^{k+1}(\Th)^d]{\vec{w}}\seminorm[W^{1,4}(\Omega)^d]{\vec{w}}\norm[1,h]{\uvec{z}_h}.
  \end{aligned}
  $$
  To pass from the second to the third line, we have used the approximation properties \eqref{eq:approx.DT} of the divergence reconstruction with $\ell=2k$ to bound the first factor,
  the linearity, idempotency, and $L^4$-boundedness of $\vlproj[T]{k}$ followed by the approximation properties \eqref{eq:lproj.approx:T} of the $L^2$-orthogonal projector with $l=0$, $p=4$, $m=0$, and $s=1$ to estimate the second factor as follows:
  $$
  \norm[L^4(T)^d]{\hvec{w}_T-\vlproj[T]{0}\vec{w}}
  = \norm[L^4(T)^d]{\vlproj[T]{k}(\vec{w}-\vlproj[T]{0}\vec{w})}
  \lsim \norm[L^4(T)^d]{\vec{w}-\vlproj[T]{0}\vec{w}}
  \lsim h_T\seminorm[W^{1,4}(T)^d]{\vec{w}},
  $$
  the discrete Sobolev embedding \eqref{eq:sobolev.embeddings} for $q=4$ for the third factor, and a discrete H\"{o}lder inequality on the sum over $T\in\Th$ with exponents $(2,4,4)$ to conclude.
  On the other hand, H\"{o}lder inequalities with exponents $(4,2,4)$ followed by the approximation properties \eqref{eq:lproj.approx:T} of the $L^2$-orthogonal projector with $l=k$, $p=2$, $m=0$, and $s=k+1$ give for the second contribution
  $$
  |\term_{2,2}|
  \lsim h^{k+1}\seminorm[H^{k+1}(\Th)^d]{\vec{w}}\seminorm[W^{1,4}(\Omega)^d]{\vec{w}}\norm[L^4(\Omega)^d]{\vec{z}_h}
  \lsim h^{k+1}\seminorm[H^{k+1}(\Th)^d]{\vec{w}}\seminorm[W^{1,4}(\Omega)^d]{\vec{w}}\norm[1,h]{\uvec{z}_h},
  $$
  where we have used the discrete Sobolev embedding \eqref{eq:sobolev.embeddings} for $q=4$ to conclude.
  Collecting the above bounds, we arrive at
  \begin{equation}\label{eq:th:consistency:T2}
    |\term_2|\lsim h^{k+1}\seminorm[H^{k+1}(\Th)^d]{\vec{w}}\seminorm[W^{1,4}(\Omega)^d]{\vec{w}}\norm[1,h]{\uvec{z}_h}.
  \end{equation}

  To estimate the third term, using a H\"{o}lder inequality with exponents $(4,\infty,4,2)$ we obtain
  \begin{equation}\label{est:th.consistency:T3}
  |\term_3|
  \le
  \sum_{T\in\Th}\sum_{F\in\Fh[T]}
  \norm[L^4(F)^d]{\hvec{w}_F}\norm[L^4(F)^d]{\hvec{w}_F-\hvec{w}_T}\norm[L^2(F)^d]{\vec{z}_F-\vec{z}_T}.
  \end{equation}
  For the first factor inside the summations, we use the $L^4$-boundedness of $\vlproj[F]{k}$ followed by a local trace inequality in $L^4$ (see, e.g., \cite[Eq. (A.10)]{Di-Pietro.Droniou:17}) and the fact that $h_T\le {\rm  diam}(\Omega)\lsim 1$ to write
  $$
  \norm[L^4(F)^d]{\hvec{w}_F}
  \lsim\norm[L^4(F)^d]{\vec{w}}
  \lsim h_T^{-\frac14}\left(\norm[L^4(T)^d]{\vec{w}} + h_T\norm[L^4(T)^{d\times d}]{\GRAD\vec{w}}
  \right)
  \lsim h_T^{-\frac14}\norm[W^{1,4}(T)^d]{\vec{w}}.
  $$
  For the second factor, using the linearity, idempotency, and $L^4$-boundedness of $\vlproj[F]{k}$ followed by the optimal approximation properties of $\vlproj[T]{k}$ we obtain
  $$
  \norm[L^4(F)^d]{\hvec{w}_F-\hvec{w}_T}
  = \norm[L^4(F)^d]{\vlproj[F]{k}(\vec{w}-\hvec{w}_T)}
  \lsim\norm[L^4(F)^d]{\vec{w}-\hvec{w}_T}
  \lsim h_T^{k+\frac34}\seminorm[W^{k+1,4}(T)^d]{\vec{w}}.
  $$
  Collecting the above estimates, we can go on writing
  \begin{align}
      |\term_3|
      &\lsim
      \sum_{T\in\Th}\sum_{F\in\Fh[T]} h_T^{-\frac14}\norm[W^{1,4}(T)^d]{\vec{w}}~h_T^{k+\frac34}\seminorm[W^{k+1,4}(T)^d]{\vec{w}}~\norm[L^2(F)^d]{\vec{z}_F-\vec{z}_T}
      \nonumber\\
      &\lsim h^{k+1}\norm[W^{1,4}(\Omega)^d]{\vec{w}}\seminorm[W^{k+1,4}(\Th)^d]{\vec{w}}\left(
      \sum_{T\in\Th}\sum_{F\in\Fh[T]}h_F^{-1}\norm[L^2(F)^d]{\vec{z}_F-\vec{z}_T}^2
      \right)^{\frac12}
      \nonumber\\
      &\lsim h^{k+1}\norm[W^{1,4}(\Omega)^d]{\vec{w}}\seminorm[W^{k+1,4}(\Th)^d]{\vec{w}}\norm[1,h]{\uvec{z}_h},
		\label{eq:th:consistency:T3}
  \end{align}
  where we have used H\"{o}lder inequalities with exponents $(4,4,2)$ on the sum over $T\in\Th$ and $F\in\Fh[T]$ together with $h_F\le h_T\le h$ to pass to the second line, and the definitions \eqref{eq:norm.1h} of $\norm[1,h]{{\cdot}}$ and \eqref{eq:norm.1T} of $\norm[1,T]{{\cdot}}$ to conclude.
  
  Finally, after observing that $\vec{w}$ vanishes on $\partial\Omega$ owing to the assumed regularity, we have for the fourth term
  \begin{equation}\label{eq:th:consistency:T4}
    \term_4=0.
  \end{equation}
  Collecting the bounds \eqref{eq:th:consistency:T1}, \eqref{eq:th:consistency:T2}, \eqref{eq:th:consistency:T3}, and \eqref{eq:th:consistency:T4}, and observing that $\seminorm[H^{k+1}(\Th)^d]{\vec{w}}\lsim\seminorm[W^{k+1,4}(\Th)^d]{\vec{w}}$, the conclusion follows.
\end{proof}

\subsection{Convective stabilisation}\label{sec:convergence.analysis:preliminary.results:jh}

\begin{proof}[Proof of Lemma \ref{lem:prop.jT}]
  (i) \emph{Continuity.} Since $\rho$ is Lipschitz-continuous,
\begin{align*}
\left|\frac{\nu}{h_F}\rho(\Pe(\vec{v}_F))-\frac{\nu}{h_F}\rho(\Pe(\vec{w}_F))|\right|
\lsim{}& \frac{\nu}{h_F}|\Pe(\vec{v}_F)-\Pe(\vec{w}_F)|\\
\le{}&|\vec{v}_F-\vec{w}_F|.
\end{align*}
Hence, setting $\uvec{e}_h\coloneq\uvec{v}_h-\uvec{w}_h$,
\begin{align}
\Big|\mathrm{j}_h(\uvec{v}_h;\uvec{z}_h,\uvec{z}'_h){}&-\mathrm{j}_h(\uvec{w}_h;\uvec{z}_h,\uvec{z}'_h)\Big|\nonumber\\
\lsim{}& \sum_{T\in\Th[T]}\sum_{F\in\Fh}\int_F |\vec{e}_F|\,|\vec{z}_F-\vec{z}_T|\,|\vec{z}'_F-\vec{z}'_T|
	+\sum_{F\in\Fhb} \int_F |\vec{e}_F|\,|\vec{z}_F|\,|\vec{z}'_F|\nonumber\\
\le{}& \sum_{T\in\Th[T]}\sum_{F\in\Fh}\norm[L^\infty(F)^d]{\vec{e}_F} \norm[L^2(F)^d]{\vec{z}_F-\vec{z}_T}\norm[L^2(F)^d]{\vec{z}'_F-\vec{z}'_T}\nonumber\\
&	+\sum_{F\in\Fhb} \norm[L^\infty(F)^d]{\vec{e}_F}\norm[L^2(F)^d]{\vec{z}_F}\norm[L^2(F)^d]{\vec{z}'_F}\nonumber\\
\lsim{}&\sum_{T\in\Th[T]}\sum_{F\in\Fh}|F|^{-\frac12}h_F\norm[L^2(F)^d]{\vec{e}_F}~ h_F^{-\frac12}\norm[L^2(F)^d]{\vec{z}_F-\vec{z}_T}~h_F^{-\frac12}\norm[L^2(F)^d]{\vec{z}'_F-\vec{z}'_T}\nonumber\\
&	+\sum_{F\in\Fhb} |F|^{-\frac12}h_F\norm[L^2(F)^d]{\vec{e}_F}~ h_F^{-\frac12}\norm[L^2(F)^d]{\vec{z}_F}~ h_F^{-\frac12}\norm[L^2(F)^d]{\vec{z}'_F},
\label{th:est.jh}\end{align}
where the second inequality follows from the H\"older inequality with exponents $(\infty,2,2)$, and the conclusion is obtained using the reverse Lebesgue inequality \eqref{ineq:reverse_leb} with $(\alpha,\beta)=(\infty,2)$. We then write, for $F\in\Fh[T]$,
\begin{align*}
	|F|^{-\frac12}h_F\norm[L^2(F)^d]{\vec{e}_F}\le{}&
	|F|^{-\frac12}h_F\norm[L^2(F)^d]{\vec{e}_F-\vec{e}_T}+|F|^{-\frac12}h_F\norm[L^2(F)^d]{\vec{e}_T}\\
	\le{}&|F|^{-\frac12}h_F^{\frac32}\norm[1,h]{\uvec{e}_h}+|F|^{-\frac12}h_Fh_T^{-\frac12}|T|^{\frac14}\norm[L^4(T)^d]{\vec{e}_T}\\
	\le{}& h^{1-\frac{d}{4}}\norm[1,h]{\uvec{e}_h},
\end{align*}
where we have used the triangle inequality in the first line, followed by $\norm[L^2(F)^d]{\vec{e}_F-\vec{e}_T}\le h_F^{\frac12}\norm[1,h]{\uvec{e}_h}$, the discrete trace inequality \eqref{ineq:discrete_tr} with $\alpha=2$, and the H\"older inequality $\norm[L^2(T)^d]{\vec{e}_T}\le |T|^{\frac14}\norm[L^4(T)^d]{\vec{e}_T}$ in the second line. The conclusion follows from $|F|\esim h_F^{d-1}$, $|T|\esim h_T^d$, $h_F\le h_T$, $h_T^{2-\frac{d}{2}}\le {\rm diam}(\Omega)^{1-\frac{d}{4}}h^{1-\frac{d}{4}}$ and, owing to the Sobolev embedding \eqref{eq:sobolev.embeddings}, $\norm[L^4(T)^d]{\vec{e}_T}\le \norm[L^4(\Omega)^d]{\vec{e}_T}\lesssim \norm[1,h]{\uvec{e}_h}$.

The proof of \eqref{est:jh.cont} is complete by plugging this estimate on $|F|^{-\frac12}h_F\norm[L^2(F)^d]{\vec{e}_F}$ into \eqref{th:est.jh}, by using Cauchy--Schwarz inequalities and by recalling the definition \eqref{eq:norm.1h} of $\norm[1,h]{{\cdot}}$.

\medskip

	(ii) \emph{Consistency.}
	Let $\uhvec{w}_h\coloneq\Ih\vec{w}$. Since $\rho$ is Lipschitz-continuous and $\rho(0)=0$ we have $|\rho(s)|\lsim |s|$ for all $s\in\Real$. Hence, $|\Pe(\hvec{w}_F)|\lsim h_F \frac{|\hvec{w}_F|}{\nu}$. This firstly shows that the boundary penalisation in $\mathrm{j}_h(\uhvec{w}_h;\uhvec{w}_h,\uvec{z}_h)$ vanishes (since $\uhvec{w}_F=0$ for all $F\in\Fhb$, given that $\vec{w}=0$ on $\partial\Omega$), and then, by H\"older's inequality with exponents $(4,4,2)$,
	\begin{align*}
	|\mathrm{j}_h(\uhvec{w}_h;\uhvec{w}_h,\uvec{z}_h)|\lsim{}& \sum_{T\in\Th}\sum_{F\in\Fh[T]}\int_F
	|\hvec{w}_F||\hvec{w}_F-\hvec{w}_T|\,|\vec{z}_F-\vec{z}_T|\\
	\lsim{}&\sum_{T\in\Th}\sum_{F\in\Fh[T]}
		\norm[L^4(F)^d]{\hvec{w}_F}\norm[L^4(F)^d]{\hvec{w}_F-\hvec{w}_T}\norm[L^2(F)^d]{\vec{z}_F-\vec{z}_T}. 
	\end{align*}
	This right-hand side is the same as in \eqref{est:th.consistency:T3}, and the estimates performed on $\term_3$ in the proof of Lemma \ref{lem:th} (see \eqref{eq:th:consistency:T3}) therefore show that \eqref{est:jh.const} holds.
\end{proof}


\section*{Acknowledgements}
The work of the second author was partially supported by \emph{Agence Nationale de la Recherche} grants HHOMM (ANR-15-CE40-0005) and fast4hho (ANR-17-CE23-0019).
The work of the third author was partially supported by the Australian Government through the Australian Research Council's Discovery Projects funding scheme (project number DP170100605).
The first author also acknowledges the support of Universit\`{a} di Bergamo through STaRs ``Supporting Talented Researchers'' fellowship.

\bibliographystyle{elsarticle-num}

\end{document}